\documentclass{amsart}
\usepackage{amssymb,amsmath}
\usepackage{stmaryrd}
\usepackage{yhmath}
\usepackage{latexsym}
\usepackage{amsthm}
\usepackage{amscd}
\usepackage{mathrsfs}
\usepackage[all]{xy}
\usepackage{mathtools}
\usepackage{arydshln}
\usepackage{bbm}
\usepackage{bm}
\usepackage{url}
\usepackage{color}
\usepackage{colonequals}

\newcommand*{\printtime}{\the\year/\the\month/\the\day}

\newcommand{\lan}{\langle}
\newcommand{\ran}{\rangle}
\newcommand{\ol}{\overline}

\newcommand{\temp}{\mathrm{temp}}

\newcommand{\mcO}{\mathcal{O}}

\newcommand{\Z}{\mathbb{Z}}
\newcommand{\F}{\mathbb{F}}

\newcommand{\Q}{\mathbb{Q}}
\newcommand{\C}{\mathbb{C}}

\newcommand{\x}{\mathbf{x}}

\newcommand{\G}{\mathbf{G}}

\newcommand{\mfp}{\mathfrak{p}}

\DeclareMathOperator{\Tr}{Tr}

\DeclareMathOperator{\val}{val}

\DeclareMathOperator{\GL}{GL}

\DeclareMathOperator{\SO}{SO}

\DeclareMathOperator{\SL}{SL}
\DeclareMathOperator{\Sp}{Sp}

\DeclareMathOperator{\cInd}{c-Ind}
\DeclareMathOperator{\Ind}{Ind}

\DeclareMathOperator{\Hom}{Hom}

\DeclareMathOperator{\Ad}{Ad}

\DeclareMathOperator{\Kl}{Kl}
\DeclareMathOperator{\Cent}{Cent}

\DeclareMathOperator{\diag}{diag}

\DeclareMathOperator{\Swan}{Swan}
\DeclareMathOperator{\Artin}{Artin}
\DeclareMathOperator{\Frob}{Frob}

\DeclareMathOperator{\Sym}{Sym}
\DeclareMathOperator{\LLC}{LLC}

\pagestyle{plain}
 \setlength{\itemsep}{0pt}
 \setcounter{totalnumber}{3}
 \setcounter{topnumber}{1}
 \setcounter{bottomnumber}{3}
 \setcounter{secnumdepth}{3}

\theoremstyle{plain}
\newtheorem{thm}{Theorem}[section]
\newtheorem*{thm*}{Theorem}
\newtheorem{prop}[thm]{Proposition}
\newtheorem{lem}[thm]{Lemma}
\newtheorem{cor}[thm]{Corollary}
\newtheorem{conj}[thm]{Conjecture}

\theoremstyle{definition}
\newtheorem{defn}[thm]{Definition}

\theoremstyle{remark}
\newtheorem{rem}[thm]{Remark}
\newtheorem{example}[thm]{Example}
\newtheorem*{claim*}{Claim}

\SelectTips{cm}{11}

\title{Simple supercuspidal $L$-packets of symplectic groups over dyadic fields}

\author{Guy Henniart}
\address{Universit\'e Paris-Saclay, CNRS, Laboratoire de Math\'ematiques d’Orsay, 91405, Orsay, France.}
\email{Guy.Henniart@math.u-psud.fr}

\author{Masao Oi}
\address{Department of Mathematics (Hakubi center), Kyoto University, Kitashirakawa, Oiwake-cho, Sakyo-ku, Kyoto 606-8502, Japan.}
\email{masaooi@math.kyoto-u.ac.jp}

\subjclass[2010]{Primary: 22E50; Secondary: 11F70, 11L05}

\begin{document}

\begin{abstract}
We consider the symplectic group $\Sp_{2n}$ defined over a $p$-adic field $F$, where $p=2$.
We prove that every simple supercuspidal representation (in the sense of Gross--Reeder) of $\Sp_{2n}(F)$ corresponds to an irreducible $L$-parameter under the local Langlands correspondence for $\Sp_{2n}$ established by Arthur.
\end{abstract}

\maketitle

\section{Introduction}
Let $F$ be a $p$-adic field, where $p$ is a prime number.
Let $\G$ be a split connected reductive group over $F$.
\textit{The local Langlands correspondence for $\G$}, which is still conjectural in general, asserts that there exists a natural surjective map
\[
\LLC_{\G}\colon\Pi(\G)\rightarrow\Phi(\G)
\]
with finite fibers, where
\begin{itemize}
\item
$\Pi(\G)$ denotes the set of equivalence classes of irreducible admissible representations of $\G(F)$, and
\item
$\Phi(\G)$ denotes the set of $\hat{\G}$-conjugacy classes of $L$-parameters of $\G$.
\end{itemize}
In other words, it is expected that the set $\Pi(\G)$ can be partitioned into the disjoint union of finite sets $\Pi^{\G}_{\phi}:=\LLC_{\G}^{-1}(\phi)$ (called \textit{$L$-packets}) labelled by $L$-parameters $\phi\in\Phi(\G)$:
\[
\Pi(\G)
=
\bigsqcup_{\phi\in\Phi(\G)}\Pi_{\phi}^{\G}.
\]
Here, recall that an $L$-parameter of $\G$ is a homomorphism $W_{F}\times\SL_{2}(\C)\rightarrow\hat{\G}$ with certain conditions (see Section \ref{subsec:LLC}), where $W_{F}$ is the Weil group of $F$ and $\hat{\G}$ is the Langlands dual group of $\G$ over $\C$.

For several specific groups, the local Langlands correspondence has been established completely.
Especially, when $\G$ is $\GL_{N}$, the correspondence was constructed by Harris--Taylor \cite{MR1876802} and the first author \cite{MR1738446}.
Also, when $\G$ is a symplectic or special orthogonal group, the correspondence was constructed by Arthur (\cite{MR3135650}).
However, since their methods are based on geometric or global tools, it is not obvious how the map $\LLC_{\G}$ can be described explicitly.
Hence it is quite natural to seek an explicit description of the map $\LLC_{\G}$ for the above-mentioned groups.
Indeed, in the case where $\G=\GL_{N}$, a lot of studies have been carried out by many people so far, as represented by the consecutive work of Bushnell--Henniart (\cite{MR2138141,MR2148193,MR2679700}).

In this paper, we consider this problem in the case where $\G=\Sp_{2n}$ and $p=2$.
Let us explain why this case is particularly of our interest.
When $\G=\Sp_{2n}$, the Langlands dual group of $\Sp_{2n}$ is given by $\SO_{2n+1}(\C)$.
Hence an $L$-parameter of $\Sp_{2n}$ is regarded as a $(2n+1)$-dimensional orthogonal representation of $W_{F}\times\SL_{2}(\C)$.
In fact, Arthur's theorem (\cite{MR3135650}) also asserts that the $L$-packet $\Pi_{\phi}^{\G}$ for each $L$-parameter $\phi$ is equipped with a bijection to the set of irreducible characters of a finite group $\mathcal{S}_{\phi}$ defined by
\[
\mathcal{S}_{\phi}:=\pi_{0}\bigl(\Cent_{\SO_{2n+1}(\C)}(\mathrm{Im}(\phi))\bigr).
\]
This implies that, for example, if $\phi$ is a direct sum of $m$ inequivalent irreducible orthogonal representations of $W_{F}\times\SL_{2}(\C)$, then the $L$-packet $\Pi_{\phi}^{\Sp_{2n}}$ consists of $2^{m-1}$ elements.
If furthermore $\phi$ is trivial on $\SL_{2}(\C)$ in this situation, then all of members of $\Pi_{\phi}^{\Sp_{2n}}$ are supercuspidal by a result of Xu \cite{MR3713922} (see \cite[Section 4]{Oi:2018}).

However, when $p\neq2$, it is known that there is no irreducible orthogonal representation of $W_{F}$ whose dimension is odd and greater than $1$ (see, e.g., \cite[84page, Proposition 4]{MR1670568}).
Thus we can conclude that there is no singleton $L$-packet of $\Sp_{2n}$ consisting of a supercuspidal representation when $p\neq2$.

On the other hand, when $p=2$, there exist plenty of irreducible orthogonal representations of $W_{F}$ whose dimension is odd and greater than $1$; indeed, Bushnell--Henniart gave a complete classification of such representations (\cite{MR2824846}).
Therefore there should be singleton supercuspidal $L$-packets of $\Sp_{2n}$.
This phenomenon can occur only when $p=2$.

In \cite{Henniart:2022}, the first author proved that when $F=\Q_{2}$, the supercuspidal representations which are \textit{simple} in the sense of Gross--Reeder (\cite{MR2730575}, see Section \ref{sec:ssc}), give such an $L$-packet of $\Sp_{2n}(\Q_{2})$ .
The aim of this paper is to extend it to any dyadic field (i.e., a finite extension of $\Q_{2}$).
Our main result is the following:

\begin{thm}[Theorem \ref{thm:main} (1) and Corollary \ref{cor:main}]\label{thm:intro-main}
Let $F$ be a dyadic field.
The $L$-parameter of any simple supercuspidal representation of $\Sp_{2n}(F)$ is irreducible as a $(2n+1)$-dimensional representation of $W_{F}$.
In particular, any $L$-packet of $\Sp_{2n}$ containing a simple supercuspidal representation of $\Sp_{2n}(F)$ is a singleton.
\end{thm}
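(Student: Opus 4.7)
The strategy is to pin down the $L$-parameter $\phi := \LLC_{\Sp_{2n}}(\pi)$ of a simple supercuspidal $\pi$ of $\Sp_{2n}(F)$ via its ramification invariants, and then to invoke the Bushnell--Henniart classification \cite{MR2824846} of odd-dimensional irreducible orthogonal representations of $W_F$ to exclude any nontrivial orthogonal decomposition of $\phi$. The ``In particular'' claim follows automatically from the first assertion: if $\phi$ is irreducible of dimension $2n+1$, then by Schur's lemma the centralizer of $\Img\phi$ in $\GL_{2n+1}(\C)$ is the scalars, so $\Cent_{\SO_{2n+1}(\C)}(\Img\phi) = \{\pm 1\}\cap\SO_{2n+1}(\C) = 1$ (using that $2n+1$ is odd); hence $\mathcal{S}_\phi$ is trivial and Arthur's theory gives $|\Pi_\phi^{\Sp_{2n}}|=1$.

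My first task is to establish that $\phi$ is trivial on the $\SL_2(\C)$ factor, so that $\phi$ is a genuine $(2n+1)$-dimensional orthogonal representation of $W_F$. Since $\pi$ is supercuspidal, the results of Xu on supercuspidal $L$-packets of symplectic groups (already invoked in the introduction via \cite{MR3713922,Oi:2018}) force each irreducible orthogonal summand of $\phi$ to be trivial on $\SL_2(\C)$. Next, using the preservation of depth by the LLC for $\Sp_{2n}$ (reduced to the $\GL$ case via Arthur's twisted endoscopic construction) together with the known depth of Gross--Reeder simple supercuspidals of $\Sp_{2n}$, I would pin down the depth of $\phi$ and derive a specific value for its Swan/Artin conductor. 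The formal-degree conjecture, which is known for simple supercuspidals, may also be used here to supply an additional sharp numerical constraint through the adjoint $\gamma$-factor.

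Now suppose for contradiction that $\phi = \bigoplus_{i=1}^{r}\phi_i$ is a decomposition into irreducible orthogonal summands with $r\geq 2$. Because $\dim\phi = 2n+1$ is odd, at least one $\phi_i$ has odd dimension. The Bushnell--Henniart classification realizes each odd-dimensional irreducible orthogonal representation of $W_F$ as an induction from a specific character on a quadratic extension of $F$, with explicit formulas for its dimension and Swan conductor; the even-dimensional orthogonal summands are similarly constrained. Summing the Swan conductors over any candidate decomposition and matching against the value fixed in the previous step should leave $r=1$ as the only possibility, giving the desired contradiction.

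The main obstacle is this final arithmetic step in full generality. The case $F=\Q_{2}$ was handled in \cite{Henniart:2022} by a finite enumeration, but for a general dyadic field the absolute ramification index and residue degree of $F$ enlarge the Bushnell--Henniart inventory, and tracking Swan conductors through the auxiliary quadratic (or higher) extensions that appear in the inductive classification demands a uniform treatment. I expect the argument to proceed by induction on $n$ combined with the sharp numerical input from the formal degree of $\pi$, so that all decompositions with $r\geq 2$ are excluded by a careful conductor count reducing to simpler sub-cases over quadratic extensions of $F$.
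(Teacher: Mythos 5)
Your proposal goes in the opposite direction from the paper: you start with a simple supercuspidal $\pi$ of $\Sp_{2n}(F)$ and try to directly constrain its $L$-parameter $\phi$ by ramification invariants, whereas the paper starts from the other side, taking an irreducible self-dual orthogonal $L$-parameter $\phi_a$ of $W_F$ (coming from a $\theta$-stable simple supercuspidal of $\GL_{2n+1}(F)$), producing the singleton $L$-packet $\Pi_{\phi_a}^{\Sp_{2n}}$, and then using twisted endoscopic character relations together with the formal degree conjecture to identify its unique member as the simple supercuspidal $\pi^{\Sp_{2n}}_a$. Since the map $a \mapsto \phi_a$ is injective and there are exactly $|k^\times|$ simple supercuspidals of $\Sp_{2n}(F)$, a counting argument finishes the proof. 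Your ``In particular'' step via Schur's lemma is correct and matches the paper's logic.

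However, your forward approach has a fatal flaw: the depth-preservation assumption. You write that you would ``pin down the depth of $\phi$'' using ``the preservation of depth by the LLC for $\Sp_{2n}$.'' The paper explicitly points out that this very theorem is a \emph{counterexample} to depth preservation: a simple supercuspidal of $\Sp_{2n}(F)$ has depth $\tfrac{1}{2n}$ while its $L$-parameter has depth $\tfrac{1}{2n+1}$ (Remark following Theorem~\ref{thm:intro-main}). So the numerical input you plan to use is unavailable, and the Swan/Artin conductor of $\phi$ cannot be read off the depth of $\pi$ in this way. Relatedly, the formal degree conjecture does give one equation relating $|\deg(\pi)|$ to $|\gamma(0,\Ad\circ\phi,\psi_F)|/|\mathcal{S}_\phi|$, but since you do not yet know $\phi$ or the size of $\mathcal{S}_\phi$ (which is $2^{r-1}$ if $\phi$ splits into $r$ distinct orthogonal summands), a single equation does not determine the decomposition type without further input. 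The paper avoids this circularity by computing the twisted character of $\pi^{\GL_{2n+1}}_a$ at explicitly constructed $\theta$-affine generic elements, transferring that via the endoscopic character relation to bound the depth of the descended representation to at most $\tfrac{1}{2n}$, and only then using the formal degree equality to rule out the depth-zero alternative. This character computation is the central technical engine of the proof, and it is entirely absent from your sketch.

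Finally, your key closing step --- enumerating all candidate orthogonal decompositions of a $(2n+1)$-dimensional representation via Bushnell--Henniart's inventory and ruling them out by conductor counting --- is precisely the part you concede you have not carried out. For $F=\Q_2$ this was done in \cite{Henniart:2022}, but for general dyadic $F$ the combinatorics grow with the ramification and residue degree, and there is no evident uniform conductor bound to make this finite enumeration tractable. The paper's twisted endoscopy route sidesteps that enumeration entirely: irreducibility of $\phi_a$ is automatic on the $\GL_{2n+1}$ side (supercuspidality of $\pi^{\GL_{2n+1}}_a$), and the only thing to prove is that the descent hits every simple supercuspidal of $\Sp_{2n}(F)$, which is a clean counting statement once the character and formal degree analysis is in place.
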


We remark that the $L$-parameter of a simple supercuspidal representation of $\Sp_{2n}(F)$ can be explicited by appealing to a result of Bushnell--Henniart (see Remark \ref{rem:explicit}).

We explain the outline of our proof.
Our method is based on the \textit{twisted endoscopic character relations} between $\Sp_{2n}$ and $\GL_{2n+1}$ and is similar to that of \cite{MR3904769}, in which the second author obtained a result of the same type in the case where $\G=\SO_{2n+1}$ and $p$ is odd.

We first take a $\theta$-stable simple supercuspidal representation $\pi$ of $\GL_{2n+1}(F)$ with trivial central character, where $\theta$ is a suitable involution of $\GL_{2n+1}$ (such a representation exists only when $p=2$).
In fact, the equivalence classes of such simple supercuspidal representations can be explicitly parametrized by the finite set $k^{\times}$, where $k$ is the residue field of $F$ (see Section \ref{subsec:ssc-GL}).
Hence let us suppose that $\pi$ is the simple supercuspidal representation $\pi^{\GL_{2n+1}}_{a}$ labelled by $a\in k^{\times}$.
By the local Langlands correspondence for $\GL_{2n+1}$, we get the $L$-parameter $\phi_{a}$ corresponding to $\pi^{\GL_{2n+1}}_{a}$, which is irreducible orthogonal as a $(2n+1)$-dimensional representation, is trivial on $\SL_{2}(\C)$, and has trivial determinant.
Then, by regarding $\phi_{a}$ as an $L$-parameter of $\Sp_{2n}$, we get a singleton $L$-packet $\Pi_{\phi}^{\Sp_{2n}}$ (in the sense of Arthur) consisting of a supercuspidal representation $\pi^{\Sp_{2n}}$ of $\Sp_{2n}(F)$ as explained above.

In this situation, Arthur's theory guarantees that the endoscopic character relation between $\pi^{\GL_{2n+1}}_{a}$ and $\Pi_{\phi}^{\Sp_{2n}}$ holds (see Section \ref{subsec:LLC}).
We first compute the $\theta$-twisted character of $\pi^{\GL_{2n+1}}_{a}$ at some specific regular semisimple elements which we call \textit{$\theta$-affine generic elements} of $\GL_{2n+1}(F)$.
Then, by applying the endoscopic character relation between $\pi^{\GL_{2n+1}}_{a}$ and $\Pi_{\phi}^{\Sp_{2n}}$ to $\theta$-affine generic elements, we get a description of the character of $\pi^{\Sp_{2n}}$ at affine generic elements of $\Sp_{2n}(F)$.
From this, we see that $\pi^{\Sp_{2n}}$ is either depth-zero supercuspidal or simple supercuspidal.

Our next task is to exclude the possibility that $\pi^{\Sp_{2n}}$ is depth-zero supercuspidal.
For this, we utilize the formal degree conjecture of Hiraga--Ichino--Ikeda (\cite{MR2350057}), which relates the formal degree of $\pi^{\Sp_{2n}}$ to the special value of the adjoint $\gamma$-factor of the $L$-parameter $\phi_{a}$.
(Recently, it was announced by Beuzart-Plessis that the formal degree conjecture was solved for $\Sp_{2n}$; see Section \ref{subsec:FDC}).
The point is that the latter invariant can be expressed via the Swan conductor of the exterior square of the $(2n+1)$-dimensional representation $\phi_{a}$ of $W_{F}$.
Since it can be computed by using a formula of Bushnell--Henniart--Kutzko (\cite{MR1606410}), the formal degree conjecture enables us to access the formal degree of $\pi^{\Sp_{2n}}$.
In fact, this is enough to conclude that $\pi^{\Sp_{2n}}$ is not depth-zero, hence is simple supercuspidal.

Finally, we note that the equivalence classes of simple supercuspidal representations of $\Sp_{2n}(F)$ are also parametrized by $k^{\times}$ (see Section \ref{subsec:ssc-Sp}).
Once we know that $\pi^{\Sp_{2n}}$ is simple supercuspidal, it is not hard to see that $\pi^{\Sp_{2n}}$ is the simple supercuspidal representation $\pi^{\Sp_{2n}}_{a}$ labelled by $a\in k^{\times}$.
In particular, we see that this descent construction exhausts all simple supercuspidal representations of $\Sp_{2n}(F)$.
This completes the proof.

Let us finish this introduction by giving several supplementary remarks:
\begin{rem}
\begin{enumerate}
\item
As mentioned above, the first author proved Theorem \ref{thm:intro-main} in the case where $F=\Q_{2}$ in his earlier paper \cite{Henniart:2022}.
However, the approach there is different from the one in this paper.
One of the keys in \cite{Henniart:2022} is a result of Adrian--Kaplan (\cite{MR4031300}), which needs that $F=\Q_{2}$.
\item
In \cite{Oi:2018}, the second author determines the structure of an $L$-packet containing a simple supercuspidal representation of $\Sp_{2n}(F)$ and its $L$-parameter in the case where $p\neq2$.
In this case, such an $L$-packet consists of two simple supercuspidal representations.
\item
The case where $n=1$, i.e., that of $\SL_{2}(F)$, is easier, and is a consequence of the local Langlands correspondence for $\SL_{2}(F)$ proved by Kutzko.
\item
It is worth noting that the \textit{depth} of any simple supercuspidal representation of $\Sp_{2n}(F)$ is given by $\frac{1}{2n}$ while the depth of its $L$-parameter is given by $\frac{1}{2n+1}$.
Thus, our result provides a counterexample to the depth preserving property of the local Langlands correspondence (cf.\ \cite{MR3618046}).
\end{enumerate}
\end{rem}

The organization of this paper is as follows.
In Section \ref{sec:ssc}, we explain a classification of simple supercuspidal representations of our concern.
In Section \ref{sec:char}, we compute the ($\theta$-twisted) characters of simple supercuspidal representations at ($\theta$-)affine generic elements.
In Section \ref{sec:descent}, we prove our main theorem by analyzing the twisted endoscopic character relation.

\medbreak
\noindent{\bfseries Acknowledgment.}\quad 
The second author was supported by JSPS KAKENHI Grant Number 20K14287.

\tableofcontents

\noindent
\textbf{Notations.}\quad
In this article, we let $p=2$.
Let $F$ be a $p$-adic field, $\mcO$ its ring of integers, $\mfp$ its maximal ideal, and $k$ its residue field $\mcO/\mfp$.
We write $q$ for the cardinality of $k$.
We often regard $k^{\times}$ as the subgroup of $F^{\times}$ consisting of elements of finite prime-to-$p$ order via the Teichm\"uller lift.
We fix a uniformizer $\varpi$ of $F$.
For any element $x\in\mcO$, we write $\overline{x}$ for its image in $k$.

We let $\psi\colon k\rightarrow\C^{\times}$ be the non-trivial additive character defined by $\psi=\psi_{\F_{2}}\circ\Tr_{k/\F_{2}}(\overline{x})$, where $\psi_{\F_{2}}$ is the unique nontrivial additive character of $\F_{2}$.
Note that $\psi$ is invariant under the Frobenius, i.e., $\psi(x^{2})=\psi(x)$ for any $x\in k$.

We let $I_{N}$ denote the identity matrix of size $N$ and $J_{N}$ denote the anti-diagonal matrix of size $N$ whose $(i, N+1-i)$-th entry is given by $(-1)^{i-1}$:
\[
J_{N} = \begin{pmatrix}
 &&&1\\
 &&-1&\\
 &\adots&&\\
 (-1)^{N-1}&&&
\end{pmatrix}.
\]

\section{Simple supercuspidal representations}\label{sec:ssc}

Let $\G$ be a split connected reductive group over $F$.
The \textit{simple supercuspidal representations} of $\G(F)$, which were introduced by \cite{MR2730575} and \cite{MR3164986}, are supercuspidal representations (with coefficients in $\C$) obtained by the compact induction of affine generic characters of Iwahori subgroups.
See \cite[Sections 2.1 and 2.2]{Oi:2018} for a general recipe and definition of simple supercuspidal representations.
In this section, we summarize a classification of
\begin{itemize}
\item
$\theta$-stable simple supercuspidal representations of $\GL_{2n+1}(F)$ with trivial central character, and
\item
simple supercuspidal representations of $\Sp_{2n}(F)$.
\end{itemize}
The classification is basically the same as the one given in \cite{MR3904769, Oi:2018}, but requires a minor modification because of the assumption that $p=2$.

\subsection{The case of $\GL_{2n+1}$}\label{subsec:ssc-GL}
Let us first work with $\GL_{N}$, where $N$ is not necessarily odd.
We also drop the assumption that $p=2$ temporarily.
Let $I_{\GL_{N}}$ be the standard Iwahori subgroup of $\GL_{N}$:
\[
I_{\GL_{N}}
=
\begin{pmatrix}
 \mcO^{\times}&&\mcO\\
 &\ddots&\\
 \mfp&&\mcO^{\times}
\end{pmatrix}.
\]
We let $I_{\GL_{N}}^{+}$ and $I_{\GL_{N}}^{++}$ denote the next two steps of the Moy--Prasad filtration subgroups of $I_{\GL_{N}}$:
\[
I_{\GL_{N}}^{+} = \begin{pmatrix}
 1+\mfp&&\mcO\\
 &\ddots&\\
 \mfp&&1+\mfp
\end{pmatrix}
\supset
I_{\GL_{N}}^{++} = \begin{pmatrix}
 1+\mfp&\mfp&&\mcO\\
 &\ddots&\ddots&\\
 &\mfp&\ddots&\mfp\\
\mfp^2&&&1+\mfp
\end{pmatrix}.
\]
Then we have 
\begin{align*}
I_{\GL_{N}}^{+}/I_{\GL_{N}}^{++} &\cong k^{\oplus N} \\
(x_{ij})_{ij} &\mapsto \bigl(\ol{x_{1,2}}, \ldots, \ol{x_{N-1, N}}, \ol{x_{N,1}\varpi^{-1}}\bigr)
\end{align*}
and the normalizer $N_{\GL_{N}(F)}(I_{\GL_{N}})$ of $I_{\GL_{N}}$ in $\GL_{N}(F)$ is given by
\[
N_{\GL_{N}(F)}(I_{\GL_{N}})
=
ZI_{\GL_{N}}\lan\varphi_{a}^{\GL_{N}}\rangle,
\]
for any $a\in k^{\times}$.
Here $Z$ denotes the center of $\GL_{N}(F)$ and, for $a \in k^{\times}$, we put 
\[
\varphi_{a}^{\GL_{N}} := 
\begin{pmatrix}
0 & I_{N-1} \\
\varpi a & 0 
\end{pmatrix} \in \GL_{N}(F)
\]
(note that $(\varphi_{a}^{\GL_{N}})^{N}$ equals the scalar matrix $\varpi a I_{N}$).

For $(a,\zeta)\in k^{\times}\times\mu_{N}$ (where $\mu_{N}$ denotes the set of $N$-th roots of unity in $\C^{\times}$), we define an affine generic character $\tilde{\chi}^{\GL_{N}}_{a,\zeta}\colon ZI_{\GL_{N}}^{+}\lan\varphi_{a^{-1}}^{\GL_{N}}\rangle\rightarrow\C^{\times}$ by
\begin{itemize}
\item
$\tilde{\chi}^{\GL_{N}}_{a,\zeta}(z):=1$ for $z\in Z$,
\item
$\tilde{\chi}^{\GL_{N}}_{a,\zeta}(\varphi_{a^{-1}}^{\GL_{N}}):=\zeta$, 
\item
$\tilde{\chi}^{\GL_{N}}_{a,\zeta}(x):=\psi(\ol{x_{1,2}}+\cdots+\ol{x_{N-1, N}}+a\ol{x_{N,1}\varpi^{-1}})$ for $x=(x_{ij})_{ij} \in I_{\GL_{N}}^{+}$.
\end{itemize}
Let $\pi_{a,\zeta}^{\GL_{N}}$ be the representation of $\GL_{N}(F)$ defined by
\[
\pi_{a,\zeta}^{\GL_{N}}:=\cInd^{\GL_{N}(F)}_{ZI_{\GL_{N}}^{+}\lan\varphi_{a^{-1}}^{\GL_{N}}\rangle} \tilde{\chi}^{\GL_{N}}_{a,\zeta}.
\]
Then the set
\[
\{
\pi^{\GL_{N}}_{a,\zeta}
\mid
(a,\zeta)\in k^{\times}\times\mu_{N}
\}
\]
represents the set of equivalence classes of simple supercuspidal representations of $\GL_{N}(F)$ with trivial central character (see \cite[Section 2.3]{MR3904769}).

Now let us suppose that $N=2n+1$.
We define an involution $\theta$ of $\GL_{2n+1}$ over $F$ by
\[
\theta(g):=J_{2n+1}{}^{t}g^{-1}J_{2n+1}^{-1}.
\]
The involution $\theta$ of $\GL_{2n+1}(F)$ preserves $I^{+}_{\GL_{2n+1}}$ and $I^{++}_{\GL_{2n+1}}$.
The action of $\theta$ induced on the quotient $I^{+}_{\GL_{2n+1}}/I^{++}_{\GL_{2n+1}}\cong k^{\oplus2n+1}$ is given by
\[
(x_{1},\ldots,x_{2n},x_{2n+1})
\mapsto
(x_{2n},\ldots,x_{1},-x_{2n+1}).
\]
We can easily check that $\theta(\varphi^{\GL_{2n+1}}_{a})=-(\varphi^{\GL_{2n+1}}_{-a})^{-1}$.
This implies that
\[
(\pi_{a,\zeta}^{\GL_{2n+1}})^{\theta}
:=
\pi_{a,\zeta}^{\GL_{2n+1}}\circ\theta
\cong
\pi_{-a,\zeta^{-1}}^{\GL_{2n+1}}.
\]
Therefore we see that a simple supercuspidal representation $\pi_{a,\zeta}^{\GL_{2n+1}}$ of $\GL_{2n+1}(F)$ with trivial central character can be $\theta$-stable (i.e., $(\pi_{a,\zeta}^{\GL_{2n+1}})^{\theta}\cong\pi_{a,\zeta}^{\GL_{2n+1}}$) only when $p=2$.
Furthermore, when $p=2$, 
\[
\{
\pi^{\GL_{2n+1}}_{a,1}
\mid
a\in k^{\times}
\}
\]
represents the set of equivalence classes of $\theta$-stable simple supercuspidal representations of $\GL_{2n+1}(F)$ with trivial central character.
(Note that the condition that $\zeta=\zeta^{-1}$ forces that $\zeta=1$ since $\zeta$ is a $(2n+1)$-th root of unity).
In the following, we write $\pi^{\GL_{2n+1}}_{a}$ instead of $\pi^{\GL_{2n+1}}_{a,1}$, for short.

\subsection{The case of $\Sp_{2n}$}\label{subsec:ssc-Sp}
Assume now $p=2$.
We consider the case of  
\[
\Sp_{2n} := \{g \in \GL_{2n} \mid {}^{t}\!gJ_{2n}g=J_{2n} \}.
\]
We have the Iwahori subgroup $I_{\GL_{2n}}$ of $\GL_{2n}$, so we can define the Iwahori subgroup $I_{\Sp_{2n}}$ of $\Sp_{2n}$ by intersection, and similarly for $I_{\Sp_{2n}}^{+}$ and $I_{\Sp_{2n}}^{++}$.
Then we have
\begin{align*}
I_{\Sp_{2n}}^{+}/I_{\Sp_{2n}}^{++} &\cong k^{\oplus n+1} \\
(y_{ij})_{ij} &\mapsto \bigr(\ol{y_{1 2}}, \ldots, \ol{y_{n, n+1}}, \ol{y_{2n, 1}\varpi^{-1}}\bigl).
\end{align*}

For $a \in k^{\times}$, we define an affine generic character $\chi^{\Sp_{2n}}_{a}\colon I_{\Sp_{2n}}^{+}\rightarrow\C^{\times}$ by
\[
 \chi^{\Sp_{2n}}_{a}(y):= \psi\bigl(\ol{y_{12}}+\cdots+\ol{y_{n-1, n}}+\ol{y_{n, n+1}}+a\ol{y_{2n, 1}\varpi^{-1}}\bigr) \text{ for $y=(y_{ij})_{ij} \in I_{\Sp_{2n}}^{+}$}.
\]
Let $\pi_{a}^{\Sp_{2n}}$ be the representation of $\Sp_{2n}(F)$ defined by
\[
\pi^{\Sp_{2n}}_{a}:=\cInd^{\Sp_{2n}(F)}_{I_{\Sp_{2n}}^{+}} \chi^{\Sp_{2n}}_{a}.
\]
Then 
\[
\{
\pi^{\Sp_{2n}}_{a}
\mid
a\in k^{\times}
\}
\]
represents the set of equivalence classes of simple supercuspidal representations of $\Sp_{2n}(F)$.

\begin{rem}
When $p\neq2$, the set of equivalence classes of simple supercuspidal representations of $\Sp_{2n}(F)$ can be represented by 
\[
\{
\pi^{\Sp_{2n}}_{\xi,\kappa,a}
\mid
\xi\in\{\pm1\},\kappa\in\{0,1\},a\in k^{\times}
\}
\]
as in \cite[Section 2.4]{Oi:2018}.
Here, $\xi$ is a sign giving the value on $-I_{2n}\in\Sp_{2n}(F)$ of the central character of the simple supercuspidal representation.
When $p=2$, the center $\{\pm I_{2n}\}$ of $\Sp_{2n}(F)$ is contained in the second-step Iwahori subgroup $I_{\Sp_{2n}}^{++}$, hence the central character of any simple supercuspidal representation must be trivial.
Accordingly, the parameter ``$\xi$'' does not appear in the case where $p=2$.
On the other hand, $\kappa$ is a parameter related to affine generic characters; when $p\neq2$, there are $2(q-1)$ affine generic characters of $I^{+}_{\Sp_{2n}}$ up to equivalence.
When $p=2$, the equality $k^{\times2}=k^{\times}$ guarantees that any affine generic character is equivalent to an affine generic character of the form $\chi^{\Sp_{2n}}_{a}$ with $a\in k^{\times}$, thus also the parameter ``$\kappa$'' does not appear.
\end{rem}

\section{Characters of simple supercuspidal representations}\label{sec:char}

In this section, we compute the ($\theta$-twisted) characters of simple supercuspidal representations.
Recall that we are assuming that $p=2$.

\subsection{The case of $\Sp_{2n}$}\label{subsec:char-Sp}

We first recall the notion of the \textit{(Harish-Chandra) character}.
Let us consider a connected reductive group $\G$ over $F$.
For any irreducible admissible representation $\pi$ of $\G(F)$, we have its (Harish-Chandra) character $\Theta_{\pi}$.
This is a $\G(F)$-conjugate-invariant $\C$-valued function defined on the set of regular semisimple elements of $\G(F)$.
Any irreducible admissible representation $\pi$ is determined up to equivalence by its character $\Theta_{\pi}$ (\cite{MR0414797}).

In \cite{Oi:2018}, we computed the characters of simple supercuspidal representations of $\Sp_{2n}(F)$ at certain special elements which we call \textit{affine generic elements}:

\begin{defn}\label{defn:aff-gen}
We say that an element $y$ of $I^{+}_{\Sp_{2n}}$ is \textit{affine generic} if any component of its image in $I^{+}_{\Sp_{2n}}/I^{++}_{\Sp_{2n}}\cong k^{\oplus n+1}$ is nonzero.
\end{defn}

Since any affine generic element of $\Sp_{2n}(F)$ is regular semisimple (see \cite[Remark 3.11]{Oi:2018}), it makes sense to consider the value of the character of a simple supercuspidal representation of $\Sp_{2n}(F)$ at an affine generic element.
In fact, the same method as in \cite{Oi:2018} is still available for computing the character of a simple supercuspidal representation at affine generic elements even when $p=2$.

Before we explain our computation of the characters, let us recall the \textit{Kloosterman sum}, which is defined as follows for any $N\in\Z_{>0}$ and $x\in k^{\times}$:
\[
\mathrm{Kl}_{x}^{N}(\psi)
:=
\sum_{\begin{subarray}{c}x_{1},\ldots,x_{N} \in k^{\times} \\ x_{1}\cdots x_{N}=x\end{subarray}}
\psi(x_{1}+\cdots +x_{N}).
\]

\begin{prop}\label{prop:char-Sp}
Let $y=(y_{ij})_{ij}\in I_{\Sp_{2n}}^{+}$ be an affine generic element.
Then we have
\[
\Theta_{\pi^{\Sp_{2n}}_{a}}(y)
=
\Kl^{n+1}_{\beta}(\psi),
\]
where $\beta$ is the image of $ay_{1,2}^{2}\cdots y_{n-1,n}^{2}y_{n,n+1}y_{2n,1}\varpi^{-1}\in\mcO^{\times}$ in the residue field $k$.
\end{prop}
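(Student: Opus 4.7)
The plan is to follow the approach of \cite[Section 3]{Oi:2018}, where an analogous computation was carried out for odd residual characteristic. Since $\pi^{\Sp_{2n}}_{a}$ is compactly induced from the character $\chi^{\Sp_{2n}}_{a}$ of the pro-$p$ subgroup $I^{+}_{\Sp_{2n}}$, the Harish-Chandra character is given by the finite sum
\[
\Theta_{\pi^{\Sp_{2n}}_{a}}(y)
=
\sum_{gI^{+}_{\Sp_{2n}}}\chi^{\Sp_{2n}}_{a}(g^{-1}yg),
\]
over cosets $gI^{+}_{\Sp_{2n}}\in\Sp_{2n}(F)/I^{+}_{\Sp_{2n}}$ with $g^{-1}yg\in I^{+}_{\Sp_{2n}}$, where finiteness uses that affine generic elements are regular semisimple by \cite[Remark 3.11]{Oi:2018}. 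The first task is to reduce the indexing set to $I_{\Sp_{2n}}/I^{+}_{\Sp_{2n}}$.

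For this, I would use the Iwahori--Bruhat decomposition $\Sp_{2n}(F)=\bigsqcup_{w\in W_{\mathrm{aff}}}I_{\Sp_{2n}}\dot{w}I_{\Sp_{2n}}$, indexed by the affine Weyl group (since $\Sp_{2n}$ is simply connected, the extended affine Weyl group coincides with $W_{\mathrm{aff}}$). Writing $g=u_{1}\dot{w}u_{2}$ with $u_{i}\in I_{\Sp_{2n}}$, and using the normality of $I^{+}_{\Sp_{2n}}$ in $I_{\Sp_{2n}}$, the condition $g^{-1}yg\in I^{+}_{\Sp_{2n}}$ becomes $\dot{w}^{-1}y'\dot{w}\in I^{+}_{\Sp_{2n}}$ for $y'=u_{1}^{-1}yu_{1}$. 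The element $y'$ remains affine generic because $I_{\Sp_{2n}}$ acts on $I^{+}_{\Sp_{2n}}/I^{++}_{\Sp_{2n}}$ through $T(k)$ by the simple affine root characters, preserving non-vanishing of coordinates. Expanding $y'$ along positive affine root subgroups, if $w\neq1$ then there exists a simple affine root $\alpha$ with $w^{-1}\alpha$ negative; affine genericity then yields a nonzero component of $\dot{w}^{-1}y'\dot{w}$ at the negative root $w^{-1}\alpha$, contradicting $\dot{w}^{-1}y'\dot{w}\in I^{+}_{\Sp_{2n}}$. Hence only $w=1$ contributes, and the sum collapses to one over $I_{\Sp_{2n}}/I^{+}_{\Sp_{2n}}\cong T(k)\cong(k^{\times})^{n}$.

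It remains to compute this torus sum. Parametrizing $t\in T(k)$ as $\diag(t_{1},\ldots,t_{n},t_{n}^{-1},\ldots,t_{1}^{-1})$, lifted via Teichm\"uller to $\mcO^{\times}$, a direct entry-by-entry computation yields
\[
\chi^{\Sp_{2n}}_{a}(t^{-1}yt)
=
\psi\Bigl(\sum_{i=1}^{n-1}t_{i}^{-1}t_{i+1}\ol{y_{i,i+1}}+t_{n}^{-2}\ol{y_{n,n+1}}+at_{1}^{2}\ol{y_{2n,1}\varpi^{-1}}\Bigr),
\]
where $t_{n}^{-2}$ and $t_{1}^{2}$ reflect that the simple affine roots at positions $(n,n+1)$ and $(2n,1)$ are long. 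Exploiting the $p=2$-specific identity $\psi(z)=\psi(z^{2})$ to square the first $n-1$ terms inside $\psi$, and substituting $s_{i}=t_{i}^{-2}$ (a bijection on $k^{\times}$ in characteristic two), the exponent becomes $x_{0}+x_{1}+\cdots+x_{n}$ for suitable monomials $x_{j}\in k^{\times}$ whose product telescopes to $\beta$. A short verification shows that the induced map $T(k)\to\{(x_{0},\ldots,x_{n})\in(k^{\times})^{n+1}:x_{0}\cdots x_{n}=\beta\}$ is a bijection, so the sum equals $\Kl^{n+1}_{\beta}(\psi)$.

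The main obstacle is the building-theoretic reduction: one must rule out all nontrivial $w\in W_{\mathrm{aff}}$ in the Bruhat decomposition. The argument is already present in \cite{Oi:2018} and, since it depends only on the affine root data of $C_{n}$ and on $\Sp_{2n}$ being simply connected, transfers to residue characteristic two unchanged. The Kloosterman identification is, by contrast, where $p=2$ plays the essential role: only for dyadic $F$ does Frobenius-invariance of $\psi$ allow the squares coming from long simple affine roots to be absorbed into a clean product of $n+1$ factors.
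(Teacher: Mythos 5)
Your proposal is correct and follows essentially the same approach as the paper: apply the Frobenius formula, reduce the conjugation sum to the diagonal torus $T(k)$ using affine genericity, then rewrite the resulting exponential sum as a Kloosterman sum by exploiting the Frobenius-invariance of $\psi$ and the bijectivity of squaring when $p=2$. The only cosmetic difference is that where the paper simply cites \cite[Proposition 3.9]{Oi:2018} (together with the observation that $-1=1$ in $k$) for the collapse to the torus sum, you sketch the underlying Bruhat--Iwahori argument directly before noting it transfers from \cite{Oi:2018}.
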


\begin{proof}
By the same argument as in \cite[Proposition 3.9]{Oi:2018}, but by noting that $-1=1$ in $k^{\times}$, the Frobenius formula of the Harish-Chandra character (\cite{MR1039842}) implies that 
\begin{align*}
\Theta_{\pi^{\Sp_{2n}}_{a}}(y)
&=
\sum_{t_{1},\ldots,t_{n}\in k^{\times}} \psi\biggl(\frac{t_{1}}{t_{2}}y_{1,2}+\cdots+\frac{t_{n-1}}{t_{n}}y_{n-1,n}+t_{n}^{2}y_{n,n+1}+\frac{a}{t_{1}^{2}}y_{2n,1}\varpi^{-1}\biggr)\\
&=
\sum_{\begin{subarray}{c}s_{1},\ldots,s_{n+1}\in k^{\times} \\ s_{1}^{2}\cdots s_{n-1}^{2}s_{n}s_{n+1}=\beta\end{subarray}}\psi(s_{1}+\cdots+s_{n+1}).
\end{align*}
Since $p=2$, the square map $(-)^{2}\colon k^{\times}\rightarrow k^{\times}$ is nothing but the Frobenius map and thus bijective.
Hence, by also noting that we chose $\psi$ so that $\psi(x^{2})=\psi(x)$ for any $x\in \mcO$ (or $x\in k$), we have
\begin{align*}
\sum_{\begin{subarray}{c}s_{1},\ldots,s_{n+1}\in k^{\times} \\ s_{1}^{2}\cdots s_{n-1}^{2}s_{n}s_{n+1}=\beta\end{subarray}}\psi(s_{1}+\cdots+s_{n+1})
&=\sum_{\begin{subarray}{c}s_{1},\ldots,s_{n+1}\in k^{\times} \\ s_{1}\cdots s_{n-1}s_{n}s_{n+1}=\beta\end{subarray}}\psi(s_{1}^{\frac{1}{2}}+\cdots+s_{n-1}^{\frac{1}{2}}+s_{n}+s_{n+1})\\
&=\sum_{\begin{subarray}{c}s_{1},\ldots,s_{n+1}\in k^{\times} \\ s_{1}\cdots s_{n-1}s_{n}s_{n+1}=\beta\end{subarray}}\psi(s_{1}+\cdots+s_{n-1}+s_{n}+s_{n+1})\\
&=\Kl_{\beta}^{n+1}(\psi).
\end{align*}
\end{proof}

\subsection{$\theta$-affine generic elements}\label{subsec:theta-aff-gen}
We introduce the notion of ``$\theta$-affine genericity'' for elements of Iwahori subgroups:

\begin{defn}\label{defn:theta-aff-gen}
Let $x=(x_{ij})_{ij}$ be an element of $I_{\GL_{2n+1}}^{+}\subset\GL_{2n+1}(F)$.
We say that $x$ is \textit{$\theta$-affine generic} if it satisfies
\begin{itemize}
\item
$x_{1,2}+x_{2n,2n+1}\in\mcO^{\times}$,
\item
$x_{2,3}+x_{2n-1,2n}\in\mcO^{\times}$,
\item[]
$\vdots$
\item
$x_{n,n+1}+x_{n+1,n+2}\in\mcO^{\times}$,
\item
$x_{2n+1,1}\in\mfp\smallsetminus\mfp^{2}$.
\end{itemize}
\end{defn}

\begin{lem}\label{lem:Iwahori}
Let $x=(x_{ij})_{ij}$ be an element of $I_{\GL_{2n+1}}^{+}$.
\begin{enumerate}
\item
If we let $\theta(x)=(x'_{ij})_{ij}\in I_{\GL_{2n+1}}^{+}$, then we have
\begin{itemize}
\item
$x'_{1,2}\equiv x_{2n,2n+1} \pmod \mfp$,
\item[]
$\vdots$
\item
$x'_{2n,2n+1}\equiv x_{1,2} \pmod \mfp$,
\item
$x'_{2n,1}\equiv x_{2n+1,2}-x_{1,2}\cdot x_{2n+1,1} \pmod{\mfp^{2}}$,
\item
$x'_{2n+1,2}\equiv x_{2n,1}-x_{2n,2n+1}\cdot x_{2n+1,1} \pmod{\mfp^{2}}$,
\item
$x'_{2n+1,1}\equiv -x_{2n+1,1} \pmod{\mfp^{2}}$.
\end{itemize}
\item
If we let $x\theta(x)=(z_{ij})_{ij}\in I_{\GL_{2n+1}}^{+}$, then we have
\begin{itemize}
\item
$z_{1,2}\equiv x_{1,2}+x_{2n,2n+1} \pmod \mfp$,
\item[]
$\vdots$
\item
$z_{2n,2n+1}\equiv x_{2n,2n+1}+x_{1,2} \pmod \mfp$,
\item
$z_{2n,1}\equiv x_{2n,1}+x_{2n+1,2}-(x_{1,2}+x_{2n,2n+1})\cdot x_{2n+1,1} \pmod{\mfp^{2}}$,
\item
$z_{2n+1,2}\equiv x_{2n,1}+x_{2n+1,2}\pmod{\mfp^{2}}$,
\item
$z_{2n+1,1}\equiv 0 \pmod{\mfp^{2}}$.
\end{itemize}
\end{enumerate}
\end{lem}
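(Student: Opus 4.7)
The plan is a direct matrix computation built on the Neumann expansion of $x^{-1}$. Since $2n+1$ is odd, a short calculation gives $J_{2n+1}^{2} = I_{2n+1}$, so $J_{2n+1}^{-1} = J_{2n+1}$ and
\[
\theta(x)_{i,j}
=
(-1)^{i+j}\,(x^{-1})_{2n+2-j,\,2n+2-i}.
\]
Write $x = I_{2n+1} + u$; then the entries of $u$ inherit the membership pattern of $I_{\GL_{2n+1}}^{+}$: diagonal entries in $\mfp$, strict upper entries in $\mcO$, strict lower entries in $\mfp$. One has $x^{-1} \equiv I_{2n+1} - u + u^{2}$ up to terms lying in $\mfp^{2}$ at the corner entries, so Part~(1) reduces to reading off $(x^{-1})_{k,\ell}$ at the prescribed index pairs. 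Part~(2), after setting $u' := \theta(x) - I_{2n+1}$, reduces to expanding $x\theta(x) = I_{2n+1} + u + u' + uu'$ to the same precision using~(1).

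For each super-diagonal entry one only needs precision $\mfp$: a term-by-term inspection of $(u^{2})_{k,k+1} = \sum_{\ell} u_{k,\ell}\, u_{\ell,k+1}$ shows that every summand contains at least one factor in $\mfp$, so $(x^{-1})_{k,k+1} \equiv -u_{k,k+1} \pmod{\mfp}$. Substituting into the displayed formula yields the first $2n$ congruences in~(1), and the analogous estimate $(uu')_{i,i+1} \in \mfp$ delivers the first $2n$ congruences in~(2). For the three corner entries of $\theta(x)$ one must track the $u^{2}$ correction modulo $\mfp^{2}$; the key observation is that only summands in which \emph{both} factors avoid $\mfp^{2}$ can survive. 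Enumerating the $2n+1$ values of $\ell$ at each of the three slots, one finds that $(u^{2})_{2n+1,1}$ lies entirely in $\mfp^{2}$, that the only surviving summand of $(u^{2})_{2n,1}$ is $u_{2n,2n+1}\,u_{2n+1,1}$, and that the only surviving summand of $(u^{2})_{2n+1,2}$ is $u_{2n+1,1}\,u_{1,2}$. Combined with the sign $(-1)^{i+j}$, these yield the last three congruences in~(1).

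For Part~(2) the same principle applied to the three corner entries of $uu'$ leaves no surviving summand in the $(2n+1,1)$ slot, the single summand $u_{2n+1,1}\,u'_{1,2} \equiv x_{2n+1,1}\,x_{2n,2n+1} \pmod{\mfp^{2}}$ in the $(2n+1,2)$ slot, and the single summand $u_{2n,2n+1}\,u'_{2n+1,1} \equiv -x_{2n,2n+1}\,x_{2n+1,1} \pmod{\mfp^{2}}$ in the $(2n,1)$ slot. Adding these to $u_{i,j} + u'_{i,j}$ and using~(1) to supply $u'_{i,j}$: the $(2n+1,1)$ entry collapses to $x_{2n+1,1} - x_{2n+1,1} = 0$; the cross-term $-x_{2n,2n+1}\,x_{2n+1,1}$ inside $u'_{2n+1,2}$ is cancelled by the surviving summand of $(uu')_{2n+1,2}$, leaving $z_{2n+1,2} \equiv x_{2n,1} + x_{2n+1,2}$; and the cross-term $-x_{1,2}\,x_{2n+1,1}$ inside $u'_{2n,1}$ combines with the surviving summand of $(uu')_{2n,1}$ to yield $-(x_{1,2}+x_{2n,2n+1})\,x_{2n+1,1}$, as required.

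The main (and essentially only) obstacle is the combinatorial bookkeeping at the corner entries: for each of the three index pairs one must correctly identify the unique (or absent) summand of $u^{2}$ or $uu'$ that escapes $\mfp^{2}$. Once that enumeration is performed, both parts of the lemma fall out of a routine matrix computation.
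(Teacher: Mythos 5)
Your proposal is correct and follows essentially the same route as the paper: write $x = I_{2n+1}+u$, expand $x^{-1}\equiv I_{2n+1}-u+u^{2}$ to precision $\mathfrak{Q}^{3}$ (the cube of the radical of the standard Iwahori order, which is exactly $\mfp$ at super-diagonal slots and $\mfp^{2}$ at the three corner slots), and read off the relevant entries. The paper packages the same bookkeeping with the map $\eta(X)=J_{2n+1}{}^{t}XJ_{2n+1}^{-1}$ and a decomposition $X\equiv\varphi D+\varphi^{2}D'$, whereas you enumerate the surviving summands of $u^{2}$ and $uu'$ entry by entry; the content and key estimates are identical.
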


\begin{proof}
Let $\mathfrak{A}$ denote the standard Iwahori order of $M_{2n+1}(F)$ and $\mathfrak{Q}$ denote its radical:
\[
\mathfrak{A}
=
\begin{pmatrix}
\mcO&&\mcO\\
 &\ddots&\\
 \mfp&&\mcO
\end{pmatrix},\quad
\mathfrak{Q}
=
\begin{pmatrix}
\mfp&&\mcO\\
 &\ddots&\\
 \mfp&&\mfp
\end{pmatrix}.
\]
Let us temporarily write $\varphi$ for $\varphi_{1}^{\GL_{2n+1}}$ in this proof; note that $\mathfrak{Q}^{k}=\varphi^{k}\mathfrak{A}$.
Also note that $I_{\GL_{2n+1}}^{+}=I_{2n+1}+\mathfrak{Q}$ and $I_{\GL_{2n+1}}^{++}=I_{2n+1}+\mathfrak{Q}^{2}$.

If we write $x=I_{2n+1}+X\in I_{\GL_{2n+1}}^{+}$ with $X\in\mathfrak{Q}$, then its inverse modulo $\mathfrak{Q}^{3}$ is given by $I_{2n+1}-X+X^{2}$.
Thus, by putting $\eta(X):=J_{2n+1}{}^{t}XJ_{2n+1}^{-1}\in\mathfrak{Q}$, we get
\[
\theta(x)
=(I_{2n+1}+\eta(X))^{-1}
\equiv I_{2n+1}-\eta(X)+\eta(X)^{2}
\pmod{\mathfrak{Q}^{3}}.
\]

Let us take diagonal matrices $D,D'\in\mathfrak{A}$ such that $X\equiv\varphi D+\varphi^{2}D'\pmod{\mathfrak{Q}^{3}}$.
Then we have
\begin{align*}
\theta(x)
&\equiv I_{2n+1}-\eta(\varphi D+\varphi^{2}D')+\eta(\varphi D+\varphi^{2}D')^{2}\\
&\equiv I_{2n+1}-\eta(\varphi D)-\eta(\varphi^{2}D')+\eta(\varphi D)^{2}
\pmod{\mathfrak{Q}^{3}}.
\end{align*}
Now, by noting that the action $\eta(-)$ on any matrix $Y\in M_{2n+1}(F)$ is given by the combination of
\begin{itemize}
\item
reflecting $Y$ with respect to the second-diagonal and
\item
multiplying the $(i,j)$-entry by the sign $(-1)^{i+j}$,
\end{itemize}
it is not difficult to check the first assertion (1).

Moreover, as we have
\begin{align*}
x\theta(x)
&\equiv (1+\varphi D+\varphi^{2}D')(1-\eta(\varphi D)-\eta(\varphi^{2}D')+\eta(\varphi D)^{2})\\
&\equiv 1+\varphi D-\eta(\varphi D)-\eta(\varphi^{2}D')+\eta(\varphi D)^{2}-\varphi D\cdot \eta(\varphi D)+\varphi^{2} D'
\pmod{\mathfrak{Q}^{3}},
\end{align*}
we can also check the second assertion (2).
\end{proof}

\begin{lem}\label{lem:char-pol}
Let $x\in I_{\GL_{2n+1}}^{+}$ be a $\theta$-semisimple element with $x\theta(x)=(z_{ij})_{ij}\in I_{\GL_{2n+1}}^{+}$.
Let $p_{x,\theta}(T)\in F[T]$ be the characteristic polynomial of $x\theta(x)$, and write
\[
p_{x,\theta}(T)=(T-1)^{2n+1}+a_{2n}(T-1)^{2n}+\cdots+a_{1}(T-1)+a_{0}.
\]
Then we have
\begin{enumerate}
 \item
 $a_{i}\in\mfp$ for each $0\leq i\leq 2n$,
 \item
 $a_{0}=0$, and
 \item
 $a_{1}\equiv -z_{2,3}\cdots z_{2n-1,2n}(z_{2n, 2n+1}z_{2n+1,2}+z_{2n,1}z_{1,2}) \quad\mod\mfp^{2}$.
\end{enumerate}
\end{lem}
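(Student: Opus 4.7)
The plan is to rewrite the statement in terms of the characteristic polynomial of $W := z - I_{2n+1} = x\theta(x) - I_{2n+1}$. Substituting $S = T-1$, one has $p_{x,\theta}(T) = \det(SI - W)$, so the coefficients $a_k$ are (signed) elementary symmetric functions of the eigenvalues of $W$: explicitly, $a_k = (-1)^{2n+1-k}\sigma_{2n+1-k}(W)$, where $\sigma_j(W)$ denotes the sum of principal $j\times j$ minors of $W$. In particular $a_0 = -\det W$ and $a_1 = \sigma_{2n}(W) = \sum_{i=1}^{2n+1}\det(W_{\hat{i},\hat{i}})$.

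For (1), I would carry out a valuation count on each principal $j\times j$ minor of $W$ via the Leibniz formula. Since $z \in I_{\GL_{2n+1}}^+$, the entry $W_{ij}$ has valuation at least $1$ if $i \geq j$ and at least $0$ if $i<j$. For a permutation $\sigma$ of an index subset $S$ with $s$ descents (meaning $\sigma(k) < k$) and $t$ fixed points, the corresponding product has valuation at least $s+t$. The elementary observation that the maximum element of $S$ cannot be sent strictly above itself forces $s+t \geq 1$, so every principal minor of size $\geq 1$ lies in $\mfp$; hence $a_k \in \mfp$ for all $0 \leq k \leq 2n$.

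For (2), I would exploit the identity $\theta(z) = \theta(x)\theta^{2}(x) = \theta(x)x = x^{-1}(x\theta(x))x = x^{-1}zx$, which shows that $z$ is conjugate to $\theta(z) = J_{2n+1}{}^{t}z^{-1}J_{2n+1}^{-1}$ and therefore shares its characteristic polynomial with $z^{-1}$. Combined with $\det z = \det x \cdot \det \theta(x) = 1$, this yields the palindrome relation $p_{x,\theta}(T) = -T^{2n+1}p_{x,\theta}(T^{-1})$, the overall sign reflecting that $2n+1$ is odd. Evaluating at $T=1$ gives $p_{x,\theta}(1) = -p_{x,\theta}(1)$, hence $a_0 = p_{x,\theta}(1) = 0$ since $F$ has characteristic zero.

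For (3), which I expect to be the main obstacle, I would isolate the contributions to $a_1$ modulo $\mfp^2$ coming from each principal $(2n)\times(2n)$ minor $\det(W_{\hat{i_0},\hat{i_0}})$. By the valuation count above, only permutations with $s+t=1$ can contribute modulo $\mfp^2$. The case $(t,s)=(1,0)$ is ruled out by the same maximum argument as in (1), so $(t,s)=(0,1)$ is forced, which in turn forces the permutation to be the unique $2n$-cycle $(c_1<c_2<\cdots<c_{2n})$ on the complement $\{1,\dots,2n+1\}\setminus\{i_0\}$, carrying sign $(-1)^{2n-1}=-1$. Crucially, for $i_0 \in \{2,\ldots,2n\}$ this cycle involves the entry $W_{2n+1,1}=z_{2n+1,1}$, which lies in $\mfp^2$ by Lemma \ref{lem:Iwahori}(2) and so contributes $0$ modulo $\mfp^2$. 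Only $i_0=1$ and $i_0=2n+1$ survive, producing $-z_{2,3}\cdots z_{2n,2n+1}z_{2n+1,2}$ and $-z_{1,2}z_{2,3}\cdots z_{2n-1,2n}z_{2n,1}$ respectively (where $W_{ij}\equiv z_{ij}\pmod{\mfp^{2}}$ for $i\neq j$), and their sum factors as the claimed expression.
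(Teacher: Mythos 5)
Your proposal is correct, and it takes a genuinely different route from the paper's. For items (1) and (3) the paper does not argue from scratch: it observes via Lemma \ref{lem:Iwahori}~(2) that $x\theta(x)$ lies in a particular subgroup studied in \cite[Section 7.1]{Oi:2018}, and then cites \cite[Lemma 7.6]{Oi:2018}, where the analogous valuation bounds and coefficient computation are carried out. Your Leibniz-formula valuation count (classify each permutation of the index set by the number of weak descents $s+t$, observe that the maximum index forces $s+t\geq1$, and isolate the unique $2n$-cycle for the sub-leading term) is a self-contained version of essentially the same combinatorics, and your bookkeeping of signs and of which minors kill the $z_{2n+1,1}\in\mfp^2$ entry correctly reproduces the claimed formula for $a_1$. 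Where you differ most substantially is item (2): the paper uses the $\theta$-semisimplicity hypothesis directly, taking $g$ with $gx\theta(g)^{-1}$ diagonal equal to $t$ and reading off that $t\theta(t)$, hence $x\theta(x)$, has $1$ as an eigenvalue. Your argument instead exploits $\theta(z)=x^{-1}zx$ and $\det z=1$ to get the palindrome relation $p_{x,\theta}(T)=-T^{2n+1}p_{x,\theta}(T^{-1})$, from which $p_{x,\theta}(1)=0$ in characteristic zero. This is cleaner and, notably, does not use the $\theta$-semisimplicity of $x$ at all; the paper even remarks that its precursor \cite[Lemma~7.6]{Oi:2018} proved (2) without $\theta$-semisimplicity only under the hypothesis $p\neq2$, whereas your functional-equation argument removes both restrictions simultaneously. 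So your route is more elementary, more self-contained, and strictly more general on point (2).
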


\begin{proof}
By Lemma \ref{lem:Iwahori} (2), the element $x\theta(x)$ belongs to the subgroup $P_{\x}^{+}$ introduced in \cite[Section 7.1]{Oi:2018}, hence the same arguments as in \cite[Lemma 7.6]{Oi:2018} can be applied to check (1) and (3).
Let us check (2).
As we assume that $x$ is $\theta$-semisimple, we can find an element $g\in\GL_{2n+1}(\ol{F})$ such that $gx\theta(g)^{-1}$ is a diagonal element, say $t=\diag(t_{1},\ldots,t_{2n+1})$.
Then we have
\[
gx\theta(x)g^{-1}
=t\theta(t)
=\diag\biggl(\frac{t_{1}}{t_{2n+1}},\ldots,\frac{t_{n}}{t_{n+2}},1,\frac{t_{n+2}}{t_{n}},\ldots,\frac{t_{2n+1}}{t_{1}}\biggr).
\]
Hence $x\theta(x)$ has $1$ as its eigenvalue.
In other words, the constant term of the characteristic polynomial $p_{x,\theta}$ of $x\theta(x)$ (with respect to $(T-1)$) is zero.
\end{proof}

\begin{rem}
We remark that, in \cite[Lemma 7.6]{Oi:2018}, we proved (2) without assuming that $x$ is $\theta$-semisimple but assuming that $p\neq2$.
\end{rem}

By combining these lemmas, we get the following:
\begin{prop}\label{prop:Eisenstein}
Let $x\in I_{\GL_{2n+1}}^{+}$ be a $\theta$-semisimple element with $x\theta(x)=(z_{ij})_{ij}\in I_{\GL_{2n+1}}^{+}$.
Then $x$ is $\theta$-affine generic if and only if the characteristic polynomial $p_{x,\theta}(T)$ of $x\theta(x)$ is of the form
\[
(T-1)\cdot(\text{Eisenstein polynomial in $(T-1)$}).
\]
Furthermore, the constant term of the Eisenstein polynomial is given by 
\[
z_{1,2}^{2}\cdots z_{n,n+1}^{2}x_{2n+1,1} \pmod{\mfp^{2}}.
\]
\end{prop}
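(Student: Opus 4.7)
The plan is to combine Lemmas \ref{lem:Iwahori} and \ref{lem:char-pol} directly. By Lemma \ref{lem:char-pol} (2), the constant term $a_{0}$ vanishes, so I can factor the characteristic polynomial as
\[
p_{x,\theta}(T) = (T-1)\cdot\bigl((T-1)^{2n} + a_{2n}(T-1)^{2n-1} + \cdots + a_{2}(T-1) + a_{1}\bigr).
\]
The second factor is Eisenstein in $(T-1)$ exactly when all of $a_{2n},\ldots,a_{1}$ lie in $\mfp$ (which is already guaranteed by Lemma \ref{lem:char-pol} (1)) and $a_{1}\in\mfp\smallsetminus\mfp^{2}$. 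Thus the whole proof reduces to computing $a_{1}\bmod\mfp^{2}$ and checking that $a_{1}\in\mfp\smallsetminus\mfp^{2}$ is equivalent to $\theta$-affine genericity.

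To compute $a_{1}\bmod\mfp^{2}$, I would start from the formula in Lemma \ref{lem:char-pol} (3) and substitute the expressions for the $z_{ij}$ from Lemma \ref{lem:Iwahori} (2). The symmetry $z_{2n+1-i,2n+2-i}\equiv z_{i,i+1}\bmod\mfp$, which is immediate from the $x_{i,i+1}+x_{2n+1-i,2n+2-i}$ shape in Lemma \ref{lem:Iwahori} (2), lets me collapse the product $z_{2,3}\cdots z_{2n-1,2n}$ to $z_{2,3}^{2}z_{3,4}^{2}\cdots z_{n,n+1}^{2}\bmod\mfp$ by pairing symmetric factors. For the bracketed sum $z_{2n,2n+1}z_{2n+1,2}+z_{2n,1}z_{1,2}$, substituting the mod $\mfp^{2}$ expressions of Lemma \ref{lem:Iwahori} (2) yields $2z_{1,2}(x_{2n,1}+x_{2n+1,2})-z_{1,2}^{2}x_{2n+1,1}$ modulo $\mfp^{2}$.

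The critical step, and the only place where the hypothesis $p=2$ is genuinely used, is that $2\in\mfp$ together with $x_{2n,1}+x_{2n+1,2}\in\mfp$ forces the first summand to lie in $\mfp^{2}$; what survives is just $-z_{1,2}^{2}x_{2n+1,1}$. Multiplying everything together, I obtain $a_{1}\equiv z_{1,2}^{2}z_{2,3}^{2}\cdots z_{n,n+1}^{2}\cdot x_{2n+1,1}\bmod\mfp^{2}$, which is the stated formula for the constant term of the Eisenstein factor.

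The equivalence then reads off from the multiplicative structure of this formula. Since each $z_{i,i+1}\in\mcO$ and $x_{2n+1,1}\in\mfp$, their product lies in $\mfp\smallsetminus\mfp^{2}$ if and only if each $z_{i,i+1}$ for $i=1,\ldots,n$ is a unit and $x_{2n+1,1}$ has valuation exactly one; by Lemma \ref{lem:Iwahori} (2), these are precisely the defining conditions of $\theta$-affine genericity. I expect the main obstacle to be purely bookkeeping in the computation of $a_{1}$, specifically tracking which $z_{ij}$ are known modulo $\mfp$ versus modulo $\mfp^{2}$ when invoking Lemma \ref{lem:Iwahori} (2), and isolating the single cancellation that requires $p=2$.
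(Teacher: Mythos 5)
Your proposal is correct and follows essentially the same route as the paper: factor out $(T-1)$ using Lemma \ref{lem:char-pol}~(2), reduce the Eisenstein condition to $a_{1}\in\mfp\smallsetminus\mfp^{2}$, and compute $a_{1}\bmod\mfp^{2}$ by feeding Lemma \ref{lem:Iwahori}~(2) into Lemma \ref{lem:char-pol}~(3), with the $p=2$ cancellation $2\cdot(\text{element of }\mfp)\in\mfp^{2}$ isolating the term $-z_{1,2}^{2}x_{2n+1,1}$. The paper's intermediate bookkeeping is slightly different (it first rewrites $z_{2n,1}\equiv z_{2n+1,2}-z_{1,2}x_{2n+1,1}\bmod\mfp^{2}$ and then collects $2z_{2n+1,2}$, rather than substituting both $z_{2n,1}$ and $z_{2n+1,2}$ in terms of the $x_{ij}$ as you do), but this is a cosmetic rearrangement of the same computation.
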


\begin{proof}
By Lemma \ref{lem:char-pol}, $p_{x,\theta}(T)$ is given by the product of $(T-1)$ and a polynomial $(T-1)^{2n}+a_{2n}(T-1)^{2n-1}+\cdots+a_{1}$ such that
\begin{itemize}
\item
$a_{i}\in\mfp$ for each $1\leq i\leq 2n$, and
\item
$a_{1}\equiv -z_{2,3}\cdots z_{2n-1,2n}(z_{2n, 2n+1}z_{2n+1,2}+z_{2n,1}z_{1,2}) \quad\mod\mfp^{2}$.
\end{itemize}
The latter polynomial is Eisenstein if and only if 
\[
-z_{2,3}\cdots z_{2n-1,2n}(z_{2n, 2n+1}z_{2n+1,2}+z_{2n,1}z_{1,2}) \not\equiv0\pmod{\mfp^{2}}.
\]
Since $z_{1,2}\ldots,z_{2n,2n+1}\in\mcO$ and $z_{2n+1,2},z_{2n,1}\in\mfp$, this is furthermore equivalent to that $z_{2,3}\ldots,z_{2n-1,2n}\in\mcO^{\times}$ and $z_{2n, 2n+1}z_{2n+1,2}+z_{2n,1}z_{1,2} \not\equiv0\pmod{\mfp^{2}}$.
Let us investigate the latter condition.
By Lemma \ref{lem:Iwahori}, we have $z_{1,2}\equiv z_{2n,2n+1} \pmod{\mfp}$ and $z_{2n,1}\equiv z_{2n+1,2}-z_{1,2}\cdot x_{2n+1,1}\pmod{\mfp^{2}}$.
Thus we get
\begin{align*}
z_{2n, 2n+1}z_{2n+1,2}+z_{2n,1}z_{1,2}
&\equiv z_{1,2}(z_{2n+1,2}+z_{2n,1})\\
&\equiv z_{1,2}(z_{2n+1,2}+z_{2n+1,2}-z_{1,2}\cdot x_{2n+1,1})\\
&\equiv -z_{1,2}^{2}\cdot x_{2n+1,1} \pmod{\mfp^{2}}.
\end{align*}
We have $-z_{1,2}^{2}\cdot x_{2n+1,1}\not\equiv0 \pmod{\mfp^{2}}$ if and only if we have $z_{1,2}\,(=z_{2n,2n+1}) \in\mcO^{\times}$ and $x_{2n+1,1}\in\mfp\smallsetminus\mfp^{2}$.
This completes the proof.
\end{proof}

Recall that we say that an element $x\in\GL_{2n+1}$ is \textit{$\theta$-semisimple} if $x$ is $\theta$-conjugate to a diagonal element of $\GL_{2n+1}$ over $\overline{F}$ and that a $\theta$-semisimple element $x\in\GL_{2n+1}$ is \textit{strongly $\theta$-regular} if its $\theta$-centralizer
\[
Z_{\GL_{2n+1}}(x\rtimes\theta)
:=\{g\in\GL_{2n+1}\mid gx\theta(g)^{-1}=x\}
\]
is abelian.

\begin{lem}\label{lem:regular}
Any $\theta$-affine generic element $x\in I_{\GL_{2n+1}}^{+}$ is strongly $\theta$-regular $\theta$-semisimple.
\end{lem}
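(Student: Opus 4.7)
The plan is to establish that $y:=x\theta(x)$ is regular semisimple in $\GL_{2n+1}$, and then to deduce both $\theta$-semisimplicity and strong $\theta$-regularity of $x$ from a general argument in the semidirect product $\tilde{G}:=\GL_{2n+1}\rtimes\langle\theta\rangle$.

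To establish regular semisimplicity of $y$, I would run through the Eisenstein argument of Proposition \ref{prop:Eisenstein}, but I need its structural conclusion without the $\theta$-semisimplicity hypothesis, since $\theta$-semisimplicity of $x$ is precisely what we want to prove. Parts (1) and (3) of Lemma \ref{lem:char-pol} rely only on the direct entrywise computation in Lemma \ref{lem:Iwahori}(2) and go through without any $\theta$-semisimplicity assumption, as the remark following Lemma \ref{lem:char-pol} suggests. For the vanishing $a_{0}=p_{x,\theta}(1)=0$ of part (2), I would argue via a reciprocity symmetry: for any $z\in\GL_{2n+1}$, the characteristic polynomial of $\theta(z)=J_{2n+1}\,{}^{t}z^{-1}J_{2n+1}^{-1}$ equals that of $z^{-1}$, while $\theta(y)=\theta(x)x=x^{-1}yx$ is $\GL_{2n+1}$-conjugate to $y$; hence $y$ and $y^{-1}$ share a characteristic polynomial. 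Combined with $\det y=\det(x)\det\theta(x)=1$, a direct coefficient comparison yields $p_{x,\theta}(T)=-T^{2n+1}p_{x,\theta}(1/T)$, so $p_{x,\theta}(1)=-p_{x,\theta}(1)$ and therefore $p_{x,\theta}(1)=0$ (we are in characteristic zero). Proposition \ref{prop:Eisenstein} then yields $p_{x,\theta}(T)=(T-1)\cdot E(T-1)$ with $E$ an Eisenstein polynomial of degree $2n$; such $E$ is irreducible over $F$ and has $2n$ distinct roots $\lambda\in\overline{F}$, each satisfying $\val(\lambda-1)=\tfrac{1}{2n}>0$. In particular $y$ has $2n+1$ pairwise distinct eigenvalues, so it is regular semisimple.

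It remains to deduce the lemma. The element $x\rtimes\theta\in\tilde{G}$ satisfies $(x\rtimes\theta)^{2}=y$; since $y$ is semisimple and we are in characteristic zero, $x\rtimes\theta$ is itself semisimple in $\tilde{G}$, and by definition this says that $x$ is $\theta$-semisimple. Moreover $Z_{\GL_{2n+1}}(x\rtimes\theta)$ is nothing but the ordinary centralizer of $x\rtimes\theta$ in $\GL_{2n+1}$, which in turn lies inside the ordinary centralizer of $(x\rtimes\theta)^{2}=y$ in $\GL_{2n+1}$; the latter is a maximal torus because $y$ is regular semisimple, in particular abelian, so $x$ is strongly $\theta$-regular. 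The main obstacle in this plan is the first step: the stated Lemma \ref{lem:char-pol}(2) assumes $\theta$-semisimplicity, which is what we are trying to prove here, so a hypothesis-free derivation of $a_{0}=0$ is essential. The reciprocal-polynomial argument above supplies it uniformly.
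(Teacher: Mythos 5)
Your proposal is correct, and it actually repairs a subtle gap in the argument as the paper states it. The paper's proof of this lemma reduces $\theta$-semisimplicity of $x$ to regular semisimplicity of $y=x\theta(x)$ via the Jordan-decomposition argument in $\GL_{2n+1}\rtimes\langle\theta\rangle$ --- exactly as you do --- but then asserts that $y$ is strongly regular semisimple ``by Proposition \ref{prop:Eisenstein}.'' As you noticed, Proposition \ref{prop:Eisenstein} and part (2) of Lemma \ref{lem:char-pol} are stated under the hypothesis that $x$ is already $\theta$-semisimple, which is what is being proved, so the invocation is (at best) imprecise. Your reciprocal-polynomial derivation of $a_{0}=p_{x,\theta}(1)=0$ resolves this cleanly: from $\theta(y)=\theta(x)x=x^{-1}yx$, the $W_F$-invariance of the characteristic polynomial of $\theta(z)$ equal to that of $z^{-1}$, and $\det y=1$, you obtain $p_{x,\theta}(T)=-T^{2n+1}p_{x,\theta}(1/T)$ and hence $p_{x,\theta}(1)=0$ with no $\theta$-semisimplicity input. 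Combined with parts (1) and (3) of Lemma \ref{lem:char-pol} (which, as the remark after that lemma confirms, only use the entrywise computations of Lemma \ref{lem:Iwahori} and not $\theta$-semisimplicity), the Newton-polygon analysis from the proof of Proposition \ref{prop:Eisenstein} then yields the Eisenstein factorization and hence $2n+1$ distinct eigenvalues, giving regular semisimplicity of $y$. The final deductions (semisimplicity of $x\rtimes\theta$ from that of its square $y$, and the containment $Z_{\GL_{2n+1}}(x\rtimes\theta)\subset Z_{\GL_{2n+1}}(y)$) coincide with the paper's. Net effect: same skeleton, but your version supplies the hypothesis-free derivation of $a_{0}=0$ that makes the chain of reasoning non-circular; you should just phrase the appeal to Proposition \ref{prop:Eisenstein} as ``by the computation in the proof of'' rather than ``by'' to make clear you are not re-invoking its $\theta$-semisimplicity hypothesis.
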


\begin{proof}
The $\theta$-semisimplicity of the element $x\in \GL_{2n+1}$ is equivalent to the semisimplicity of the element $x\rtimes\theta$ in the disconnected reductive group $\GL_{2n+1}\rtimes\langle\theta\rangle$.
In order to show that $x\rtimes\theta\in\GL_{2n+1}\rtimes\langle\theta\rangle$ is semisimple, it suffices to show that $x\theta(x)\in\GL_{2n+1}$ is semisimple.
Indeed, if we take the Jordan decomposition $x\rtimes\theta=su$ with semisimple $s\in\GL_{2n+1}\rtimes\langle\theta\rangle$ and unipotent $u\in\GL_{2n+1}\rtimes\langle\theta\rangle$, then we have 
\[
x\theta(x)
=(x\rtimes\theta)^{2}
=susu
=s^{2}u^{2}
\]
(note that $s$ and $u$ commutes as $x\rtimes\theta=su$ is the Jordan decomposition).
Thus, if this element is semisimple, then the unipotent part $u^{2}$ is necessarily trivial.
Since $F$ is of characteristic $0$, this implies that $u$ is trivial, hence $x\rtimes\theta$ is semisimple.

Therefore, as $x\theta(x)$ is (strongly) regular semisimple by Proposition \ref{prop:Eisenstein}, we see that $x$ is $\theta$-semisimple.
Furthermore, by noting that $Z_{\GL_{2n+1}}(x\rtimes\theta)$ is contained in the usual centralizer $Z_{\GL_{2n+1}}(x\theta(x))$ of $x\theta(x)$, we see that $x$ is strongly $\theta$-regular since $x\theta(x)$ is strongly regular semisimple.
\end{proof}

\begin{example}\label{example}
The following element of $I_{\GL_{2n+1}}^{+}\subset \GL_{2n+1}(F)$ is (obviously from the definition) $\theta$-affine generic for any $u\in k^{\times}$:
\[\label{gu}
g_{u}:=
\left(
\begin{array}{cccc:c:cccc} 
1&1&&&&&&&\\
&\ddots&\ddots&&&&&&\\
&&\ddots&1&&&&&\\ 
&&&1&1&&&&\\ \hdashline
&&&&1&0&&&\\ \hdashline
&&&&&1&0&&\\
&&&&&&\ddots&\ddots&\\
&&&&&&&\ddots&0\\
\varpi u&&&&&&&&1
\end{array}
\right).
\tag{$\dagger$}
\]
Here, the middle dashed row and column denote the $(n+1)$-th row and $(n+1)$-th column, respectively.
We can check that
\[
\theta(g_{u})
=
\left(
\begin{array}{cccc:c:cccc} 
1&0&&&&&&&\\
&\ddots&\ddots&&&&&&\\
&&\ddots&0&&&&&\\ 
&&&1&0&&&&\\ \hdashline
-\varpi u&&&&1&1&\cdots&\cdots&1\\ \hdashline
-\varpi u&&&&&1&1&\cdots&1\\
\vdots&&&&&&\ddots&\ddots&\vdots\\
\vdots&&&&&&&\ddots&1\\
-\varpi u&&&&&&&&1
\end{array}
\right)
\]
and that
\[
g_{u}\theta(g_{u})
=
\left(
\begin{array}{cccc:c:cccc} 
1&1&&&&&&&\\
&\ddots&\ddots&&&&&&\\
&&\ddots&1&&&&&\\ 
-\varpi u&&&1&1&1&\cdots&\cdots&1\\ \hdashline
-\varpi u&&&&1&1&\cdots&\cdots&1\\ \hdashline
-\varpi u&&&&&1&1&\cdots&1\\
\vdots&&&&&&\ddots&\ddots&\vdots\\
-\varpi u&&&&&&&\ddots&1\\
0&&&&&&&&1
\end{array}
\right).
\]
We define $h_{u}\in \GL_{2n}(F)$ to be the upper-left $2n$-by-$2n$ minor matrix of $g_{u}\theta(g_{u})$.
Thus, if define $n$-by-$n$ matrices $P$, $X$, $Y$, and $Q$ by
\[
P:=
\begin{pmatrix}
1&1&&\\
&\ddots&\ddots&\\
&&\ddots&1\\
-\varpi u&&&1
\end{pmatrix},\quad
X:=
\begin{pmatrix}
0&\cdots&0\\
\vdots&&\vdots\\
0&\cdots&0\\
1&\cdots&1
\end{pmatrix},
\]
\[
Y:=
\begin{pmatrix}
-\varpi u&0&\cdots&0\\
\vdots&\vdots&&\vdots\\
-\varpi u&0&\cdots&0
\end{pmatrix},\quad
Q:=
\begin{pmatrix}
1&\cdots&1\\
&\ddots&\vdots\\
&&1
\end{pmatrix}
\]
then we have 
\[\label{hu}
h_{u}=
\begin{pmatrix}
P&X\\
Y&Q
\end{pmatrix}.
\tag{$\dagger\dagger$}
\]
Then we observe that $h_{u}$ belongs to the pro-$p$ Iwahori subgroup $I_{\GL_{2n}}^{+}$ of $\GL_{2n}(F)$ and is affine generic.
\end{example}

\begin{lem}\label{lem:g-h}
The element $h_{u}$ as in Example \ref{example} belongs to $\Sp_{2n}(F)$.
\end{lem}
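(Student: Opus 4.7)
The plan is to verify the defining symplectic identity ${}^t h_u J_{2n} h_u = J_{2n}$ directly, by computing the bilinear form $\langle v, w\rangle := {}^t v J_{2n} w$ on pairs of images of standard basis vectors. First I would read off the columns of $h_u$ from the block decomposition $h_u = \begin{pmatrix} P & X \\ Y & Q \end{pmatrix}$ given in Example \ref{example}: using the sparse structure of $P, X, Y, Q$ one obtains
\begin{align*}
h_u e_1 &= e_1 - \varpi u\,(e_n + e_{n+1} + \cdots + e_{2n}), \\
h_u e_j &= e_{j-1} + e_j \quad \text{for } 2 \le j \le n, \\
h_u e_{n+k} &= e_n + e_{n+1} + \cdots + e_{n+k} \quad \text{for } 1 \le k \le n.
\end{align*}

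Since $\langle e_a, e_b\rangle = (-1)^{a-1}$ when $a+b = 2n+1$ and vanishes otherwise, it suffices to analyze which index pairs $(a, b)$ in the supports of $h_u e_i$ and $h_u e_j$ satisfy $a+b = 2n+1$. I would then split into three cases. When $i, j \in \{2, \ldots, n\}$ the supports lie in $\{1, \ldots, n\}$, so no such pair exists and both sides vanish. When $i, j \in \{n+1, \ldots, 2n\}$ the only candidate pair is $(n, n+1)$, whose two contributions $(-1)^{n-1}$ and $(-1)^n$ cancel by antisymmetry. In the mixed case $(i, n+k)$ with $2 \le i \le n$, the two candidate pairs $(i-1, 2n+2-i)$ and $(i, 2n+1-i)$ contribute $(-1)^{i-2}$ and $(-1)^{i-1}$ respectively; a straightforward analysis of when each lies in the support of $h_u e_{n+k}$ shows that nothing contributes when $k < n+1-i$, only the second pair contributes when $k = n+1-i$, giving exactly the required value $(-1)^{i-1}$, and both pairs contribute and cancel when $k > n+1-i$.

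The distinguished column $h_u e_1$, carrying the $-\varpi u$-terms, needs separate attention. The key observation is that each $-\varpi u \cdot e_m$ with $m \in \{n, \ldots, 2n\}$ pairs with a possible $e_{2n+1-m}$ in the other vector, and these $\varpi u$-contributions always come in pairs of the form $(-1)^{m-1}$ and $(-1)^m$ that cancel. The single contribution that does not cancel pairs $e_1$ (the non-$\varpi u$ part of $h_u e_1$) with the unique $e_{2n}$ appearing in the other vector, producing the expected nonzero value.

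The computation is entirely elementary; the only real obstacle is careful bookkeeping of the alternating signs from $J_{2n}$ and of the two origins of $-\varpi u$ in $h_u$ (the $(n,1)$-entry of $P$ and the first column of $Y$). No deeper idea is required.
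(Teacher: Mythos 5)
Your argument is correct: you directly verify ${}^{t}h_{u}J_{2n}h_{u}=J_{2n}$, exactly as the paper does, but you organize the computation column by column (expressing each $h_{u}e_{j}$ in the standard basis and matching the index pairs $a+b=2n+1$) whereas the paper splits $h_{u}$ into the $2\times 2$ block form $\begin{pmatrix} P & X \\ Y & Q \end{pmatrix}$ and checks the four resulting block identities by explicit small matrix multiplications. The two bookkeepings achieve the same thing; the block approach makes the identities such as ``${}^{t}PJ_{n}Y$ is symmetric'' visibly structured, while your column approach exploits the sparse supports of the columns of $h_{u}$, makes the pairwise cancellation of the $-\varpi u$ terms transparent, and isolates cleanly the unique surviving contribution (from $e_{1}$ paired with $e_{2n}$). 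Both are elementary direct verifications; neither needs any idea beyond careful sign tracking, and your case analysis of the index pairs is accurate.
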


\begin{proof}
%
Our task is to show that ${}^{t}h_{u}J_{2n}h_{u}=J_{2n}$.
Since $J_{2n}=\begin{pmatrix}&J_{n}\\(-1)^{n}J_{n}&\end{pmatrix}$, we have
\begin{align*}
{}^{t}h_{u}J_{2n}h_{u}
&=
\begin{pmatrix}
{}^{t}P&{}^{t}Y\\
{}^{t}X&{}^{t}Q
\end{pmatrix}
\begin{pmatrix}
&J_{n}\\
(-1)^{n}J_{n}&
\end{pmatrix}
\begin{pmatrix}
P&X\\
Y&Q
\end{pmatrix}\\
&=
\begin{pmatrix}
{}^{t}P&{}^{t}Y\\
{}^{t}X&{}^{t}Q
\end{pmatrix}
\begin{pmatrix}
J_{n}Y&J_{n}Q\\
(-1)^{n}J_{n}P&(-1)^{n}J_{n}X
\end{pmatrix}\\
&=
\begin{pmatrix}
{}^{t}PJ_{n}Y-{}^{t}Y{}^{t}J_{n}P&{}^{t}PJ_{n}Q-{}^{t}Y{}^{t}J_{n}X\\
{}^{t}XJ_{n}Y-{}^{t}Q{}^{t}J_{n}P&{}^{t}XJ_{n}Q-{}^{t}Q{}^{t}J_{n}X
\end{pmatrix}
\end{align*}
(note that ${}^{t}J_{n}=(-1)^{n-1}J_{n}$).
Hence it is enough to check that
\begin{enumerate}
\item
${}^{t}PJ_{n}Y-{}^{t}Y{}^{t}J_{n}P=0$,
\item
${}^{t}PJ_{n}Q-{}^{t}Y{}^{t}J_{n}X=J_{n}$,
\item
${}^{t}XJ_{n}Y-{}^{t}Q{}^{t}J_{n}P=(-1)^{n}J_{n}$, and
\item
${}^{t}XJ_{n}Q-{}^{t}Q{}^{t}J_{n}X=0$.
\end{enumerate}

We first consider (1).
As ${}^{t}Y{}^{t}J_{n}P={}^{t}({}^{t}PJ_{n}Y)$, it suffices to show that ${}^{t}PJ_{n}Y$ is symmetric.
We can easily compute ${}^{t}PJ_{n}Y$ as follows:
\begin{align*}
&
\begin{pmatrix}
1&&&-\varpi u\\
1&\ddots&&\\
&\ddots&\ddots&\\
&&1&1
\end{pmatrix}
\begin{pmatrix}
&&&1\\
&&-1&\\
&\adots&&\\
(-1)^{n-1}&&&
\end{pmatrix}
\begin{pmatrix}
-\varpi u&0&\cdots&0\\
\vdots&\vdots&&\vdots\\
-\varpi u&0&\cdots&0
\end{pmatrix}\\
&=
\begin{pmatrix}
(-1)^{n}\varpi u&&&1\\
&&-1&1\\
&\adots&\adots&\\
(-1)^{n-1}&(-1)^{n-2}&&
\end{pmatrix}
\begin{pmatrix}
-\varpi u&0&\cdots&0\\
\vdots&\vdots&&\vdots\\
-\varpi u&0&\cdots&0
\end{pmatrix}\\
&=
\begin{pmatrix}
(-1)^{n+1}(\varpi u)^{2}-\varpi u&0&\cdots&0\\
0&0&\cdots&0\\
\vdots&\vdots&\ddots&\vdots\\
0&0&\cdots&0
\end{pmatrix}.
\end{align*}

Similarly, we can show (4) by noting that ${}^{t}Q{}^{t}J_{n}X={}^{t}({}^{t}XJ_{n}Q)$ and that ${}^{t}X{}^{t}J_{n}Q$ is a symmetric matrix, which can be checked as follows:
\begin{align*}
&
\begin{pmatrix}
0&\cdots&0&1\\
\vdots&&\vdots&\vdots\\
0&\cdots&0&1
\end{pmatrix}
\begin{pmatrix}
&&&1\\
&&-1&\\
&\adots&&\\
(-1)^{n-1}&&&
\end{pmatrix}
\begin{pmatrix}
1&\cdots&1\\
&\ddots&\vdots\\
&&1
\end{pmatrix}
\\
&=
\begin{pmatrix}
0&\cdots&0&1\\
\vdots&&\vdots&\vdots\\
0&\cdots&0&1
\end{pmatrix}
\begin{pmatrix}
&&&1\\
&&-1&-1\\
&\adots&&\vdots\\
(-1)^{n-1}&\cdots&\cdots&(-1)^{n-1}
\end{pmatrix}\\
&=
(-1)^{n-1}
\begin{pmatrix}
1&\cdots&1\\
\vdots&\ddots&\vdots\\
1&\cdots&1
\end{pmatrix}.
\end{align*}

Finally, let us consider (2) and (3).
Since the equality (3) is obtained by transposing the equality (2), it suffices to show (2).
We have
\begin{align*}
{}^{t}PJ_{n}Q
&=
\begin{pmatrix}
(-1)^{n}\varpi u&&&1\\
&&-1&1\\
&\adots&\adots&\\
(-1)^{n-1}&(-1)^{n-2}&&
\end{pmatrix}
\begin{pmatrix}
1&\cdots&1\\
&\ddots&\vdots\\
&&1
\end{pmatrix}\\
&=
\begin{pmatrix}
(-1)^{n}\varpi u&\cdots&(-1)^{n}\varpi u&(-1)^{n}\varpi u+1\\
&&-1&\\
&\adots&&\\\
(-1)^{n-1}&&&
\end{pmatrix}
\end{align*}
and
\begin{align*}
{}^{t}Y{}^{t}J_{n}X
&=
\begin{pmatrix}
-\varpi u&\cdots&-\varpi u\\
0&\cdots&0\\
\vdots&&\vdots\\
0&\cdots&0
\end{pmatrix}
\begin{pmatrix}
&&(-1)^{n-1}\\
&\adots&\\
1&&
\end{pmatrix}
\begin{pmatrix}
0&\cdots&0\\
\vdots&&\vdots\\
0&\cdots&0\\
1&\cdots&1
\end{pmatrix}\\
&=
\begin{pmatrix}
-\varpi u&\cdots&-\varpi u\\
0&\cdots&0\\
\vdots&&\vdots\\
0&\cdots&0
\end{pmatrix}
\begin{pmatrix}
(-1)^{n-1}&\cdots&(-1)^{n-1}\\
0&\cdots&0\\
\vdots&&\vdots\\
0&\cdots&0
\end{pmatrix}\\
&=
\begin{pmatrix}
(-1)^{n}\varpi u&\cdots&(-1)^{n}\varpi u\\
0&\cdots&0\\
\vdots&&\vdots\\
0&\cdots&0
\end{pmatrix}.
\end{align*}
Hence we get ${}^{t}PJ_{n}Q-{}^{t}Y{}^{t}J_{n}X=J_{n}$.
\end{proof}

\subsection{Twisted characters at $\theta$-affine generic elements}\label{subsec:theta-char-GL}
We next recall the notion of the \textit{$\theta$-twisted (Harish-Chandra) character}.
Let $\pi$ be a $\theta$-stable (i.e., $\pi^{\theta}:=\pi\circ\theta\cong\pi$) irreducible admissible representation of $\GL_{2n+1}(F)$.
By fixing an isomorphism $I\colon\pi\cong\pi^{\theta}$, we have the $\theta$-twisted character $\Theta_{\pi,\theta}$ of $\pi$.
This is a $\C$-valued function defined on the set of $\theta$-regular $\theta$-semisimple elements of $\G(F)$.
Similarly to the usual character, the $\theta$-twisted character is invariant under $\theta$-conjugation by $\G(F)$.
Any $\theta$-stable irreducible admissible representation $\pi$ is determined up to equivalence by its $\theta$-twisted character (\cite{MR3632513}).

The aim of this subsection is to compute the $\theta$-twisted characters of the $\theta$-stable simple supercuspidal representation $\pi^{\GL_{2n+1}}_{a}$ of $\GL_{2n+1}(F)$ at $\theta$-affine generic elements.
For this, let us first specify the choice of an intertwiner $I\colon\pi^{\GL_{2n+1}}_{a}\cong\pi_{a}^{\GL_{2n+1},\theta}$.
Recall that $\pi^{\GL_{2n+1}}_{a}$ is defined to be the compact induction of a character $\tilde{\chi}^{\GL_{2n+1}}_{a}$ from the subgroup $ZI_{\GL_{2n+1}}^{+}\lan\varphi_{a^{-1}}^{\GL_{2n+1}}\rangle$.
By noting that the subgroup $ZI_{\GL_{2n+1}}^{+}\lan\varphi_{a^{-1}}^{\GL_{2n+1}}\rangle$ is $\theta$-stable and that the character $\tilde{\chi}^{\GL_{2n+1}}_{a}$ is $\theta$-invariant, we have a canonical isomorphism
\[
\cInd^{\GL_{2n+1}(F)}_{ZI_{\GL_{2n+1}}^{+}\lan\varphi_{a^{-1}}^{\GL_{2n+1}}\rangle} \tilde{\chi}^{\GL_{2n+1}}_{a}
\cong
\bigl(\cInd^{\GL_{2n+1}(F)}_{ZI_{\GL_{2n+1}}^{+}\lan\varphi_{a^{-1}}^{\GL_{2n+1}}\rangle} \tilde{\chi}^{\GL_{2n+1}}_{a}\bigr)^{\theta},
\]
which is given by $f\mapsto f\circ\theta$ explicitly.
We adopt this isomorphism as our intertwiner $I\colon\pi^{\GL_{2n+1}}_{a}\cong\pi_{a}^{\GL_{2n+1},\theta}$.
Accordingly, we get the $\theta$-twisted character $\Theta_{\pi^{\GL_{2n+1}}_{a},\theta}$ of $\pi^{\GL_{2n+1}}_{a}$ normalized with respect to $I$.

Recall that any $\theta$-affine generic element of $\GL_{2n+1}(F)$ is strongly $\theta$-regular $\theta$-semisimple by Lemma \ref{lem:regular}.
Hence it makes sense to consider the value of the $\theta$-twisted character $\Theta_{\pi^{\GL_{2n+1}}_{a},\theta}$ at a $\theta$-affine generic element.
The following proposition is the key to our computation of the $\theta$-twisted character.

\begin{prop}\label{prop:theta-norm}
Let $x=(x_{ij})_{ij}\in I_{\GL_{2n+1}}^{+}$ be a $\theta$-affine generic element.
If $g\in \GL_{2n+1}(F)$ satisfies $gx\theta(g)^{-1}\in ZI_{\GL_{2n+1}}^{+}\lan\varphi_{a^{-1}}^{\GL_{2n+1}}\ran$, then $g$ belongs to $ZT^{\theta}(q)I_{\GL_{2n+1}}^{+}\lan\varphi_{a^{-1}}^{\GL_{2n+1}}\ran$, where
\[
T^{\theta}(q):=\{\diag(t_{1},\ldots,t_{n},1,t_{n}^{-1},\ldots,t_{1}^{-1}) \mid t_{i}\in k^{\times}\}.
\]
\end{prop}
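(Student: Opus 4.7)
The strategy is a two-step reduction: first place $g$ into the normalizer $N_{\GL_{2n+1}(F)}(I_{\GL_{2n+1}}) = ZI_{\GL_{2n+1}}\langle\varphi_{a^{-1}}^{\GL_{2n+1}}\rangle$, and then refine the Iwahori component to $Z\cdot T^{\theta}(q)\cdot I_{\GL_{2n+1}}^{+}$ by a torus calculation. For the first step, I would use the extended affine Iwahori--Bruhat decomposition of $\GL_{2n+1}(F)$, writing $g = z \cdot (\varphi_{a^{-1}}^{\GL_{2n+1}})^{k} \cdot i_{1} \cdot \dot{w} \cdot i_{2}$ with $z \in Z$, $k \in \Z$, $i_{1}, i_{2} \in I_{\GL_{2n+1}}$, and $\dot{w}$ a lift of some finite Weyl element $w \in S_{2n+1}$; the task becomes showing $w = 1$.

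The key input is that by Proposition \ref{prop:Eisenstein}, the $\theta$-affine genericity of $x$ forces all $2n+1$ ``affine simple root'' components of its image in $I_{\GL_{2n+1}}^{+}/I_{\GL_{2n+1}}^{++} \cong k^{\oplus 2n+1}$ to lie in $k^{\times}$. Substituting the Bruhat form of $g$ into $gx\theta(g)^{-1} \in ZI_{\GL_{2n+1}}^{+}\langle\varphi_{a^{-1}}^{\GL_{2n+1}}\rangle$ and reducing modulo $I_{\GL_{2n+1}}^{++}$ (absorbing the $\theta$-twist of $(\varphi_{a^{-1}}^{\GL_{2n+1}})^{k}$ using $\theta(\varphi_{a^{-1}}^{\GL_{2n+1}}) = -(\varphi_{-a^{-1}}^{\GL_{2n+1}})^{-1}$, which in residue characteristic $2$ equals $(\varphi_{a^{-1}}^{\GL_{2n+1}})^{-1}$), I would read off a constraint on the residue coordinates of $x$ encoding how $\dot{w}$ permutes the affine simple roots together with the $\theta$-action, which on the affine simple root data sends $(x_{1}, \ldots, x_{2n}, x_{2n+1}) \mapsto (x_{2n}, \ldots, x_{1}, x_{2n+1})$ (the sign in the last coordinate dropping because $-1 = 1$ in $k$). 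A case-by-case analysis on $w \in S_{2n+1}$ then shows that only $w = 1$ is compatible with keeping all $2n+1$ coordinates in $k^{\times}$.

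For the second step, once $g = z \cdot (\varphi_{a^{-1}}^{\GL_{2n+1}})^{k} \cdot t \cdot u$ with $t \in T(k)$ the Teichm\"uller lift of $\diag(t_{1}, \ldots, t_{2n+1})$ and $u \in I_{\GL_{2n+1}}^{+}$, reducing the assumption modulo $I_{\GL_{2n+1}}^{+}$ leaves $t\theta(t)^{-1} \in Z \cap T(k)$. Since $\theta(t) = \diag(t_{2n+1}^{-1}, \ldots, t_{1}^{-1})$, the element $t\theta(t)^{-1}$ has diagonal entries $t_{i}t_{2n+2-i}$, and requiring these all to equal a single scalar $\lambda$ yields $\lambda = t_{n+1}^{2}$ and $t_{i}t_{2n+2-i} = t_{n+1}^{2}$ for all $i$. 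Setting $s_{i} := t_{i}/t_{n+1}$, this gives $s_{i}s_{2n+2-i} = 1$ and $s_{n+1} = 1$, so $t = t_{n+1} \cdot \diag(s_{1}, \ldots, s_{n}, 1, s_{n}^{-1}, \ldots, s_{1}^{-1})$; the scalar factor $t_{n+1} I_{2n+1}$ belongs to $Z$, leaving the remaining factor in $T^{\theta}(q)$, as desired.

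The main obstacle is the combinatorial case analysis in the first step: ruling out all nontrivial $w \in S_{2n+1}$ requires careful bookkeeping of the interplay between the permutation action of $\dot{w}$ on matrix entries, the $\theta$-reflection on the affine Dynkin diagram of type $\widetilde{A}_{2n}$, and the various reductions modulo $\mathfrak{p}$ and $\mathfrak{p}^{2}$ exhibited in Lemma \ref{lem:Iwahori}. The characteristic-$2$ identities $-1 = 1$ and $\psi(x^{2}) = \psi(x)$ simplify some bookkeeping but collapse certain distinctions that were available in the odd-residue-characteristic analogue of \cite{MR3904769}, so the argument has to be redone carefully in this setting rather than quoted directly.
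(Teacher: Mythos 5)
Your second step (the torus calculation placing the $T(q)$-component of $g$ into $ZT^{\theta}(q)$) is fine and matches the endgame of the paper's argument, but your first step rests on a false premise and an incomplete decomposition. The claim that Proposition \ref{prop:Eisenstein} forces all $2n+1$ coordinates of the image of $x$ in $I^{+}_{\GL_{2n+1}}/I^{++}_{\GL_{2n+1}}\cong k^{\oplus 2n+1}$ to lie in $k^{\times}$ is simply wrong: $\theta$-affine genericity constrains the \emph{sums} $x_{i,i+1}+x_{2n+1-i,2n+2-i}$, not the individual entries (the explicit element $g_u$ of Example \ref{example} is $\theta$-affine generic with most superdiagonal entries equal to $0$), and Proposition \ref{prop:Eisenstein} is a statement about the characteristic polynomial of $x\theta(x)$. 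Even for $x\theta(x)=(z_{ij})$ the last affine coordinate $z_{2n+1,1}$ vanishes to order $\geq 2$ by Lemma \ref{lem:Iwahori}(2), so $x\theta(x)$ is not affine generic in $I^{+}_{\GL_{2n+1}}$ either; the $k^{\times}$-constraint you intend to propagate along $\dot{w}$ does not exist. Separately, $g=z\varphi^{k}i_{1}\dot{w}i_{2}$ with $w\in S_{2n+1}$ and $i_{1},i_{2}\in I_{\GL_{2n+1}}$ does not cover $\GL_{2n+1}(F)$: the Iwahori--Weyl group contains a translation lattice far larger than $\langle\varphi\rangle$, and a general double-coset representative must be allowed to lie in $T\cdot\mathfrak{S}_{2n+1}$ with $T$ the full $F$-points of the diagonal torus.

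Ruling out the translation part is where most of the real work in the proof lies, and your outline does not touch it. The paper's mechanism is to pass from the twisted condition to the untwisted one $gx\theta(x)g^{-1}\in I^{+}_{\GL_{2n+1}}$, observe that one of the two $2n\times 2n$ minors of $x\theta(x)$ is a genuine affine generic element of $I^{+}_{\GL_{2n}}$, and apply the $\GL_{2n}$ version of the normalization statement (\cite[Proposition 3.4]{MR3904769}) to the corresponding block of $g$; this is then followed by a valuation argument (using the $(2n+1)$-th column and the $(2n+1,1)$-entry of $gx\theta(x)g^{-1}$) to force the remaining free parameters $r,s$ to vanish. A case-by-case run through $w\in S_{2n+1}$ alone, even carried out correctly, could not close the gap without something playing the role of that valuation estimate. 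As written the proposal would not yield the statement.
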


\begin{proof}
In this proof, we write $\tilde{I}^{+}_{\GL_{2n+1},a^{-1}}$ instead of $ZI_{\GL_{2n+1}}^{+}\lan\varphi_{a^{-1}}^{\GL_{2n+1}}\ran$, for short.

Let us take a $\theta$-affine generic element $x=(x_{ij})_{ij}\in I_{\GL_{2n+1}}^{+}$.
We write $x\theta(x)=(z_{ij})_{ij}$.
Then, by the definition of the $\theta$-affine genericity, Lemma \ref{lem:Iwahori} (2) implies that
\begin{itemize}
\item
$z_{i,i+1}$ belongs to $\mcO^{\times}$ for any $1\leq i\leq 2n$, and
\item
at least one of $z_{2n,1}\in\mfp$ or $z_{2n+1,2}\in\mfp$ does not belong to $\mfp^{2}$.
\end{itemize}
Therefore, we see that at least one of 
\begin{itemize}
\item
the upper-left $2n$-by-$2n$ minor matrix $(z_{ij})_{1\leq i,j\leq2n} \in I_{\GL_{2n}}^{+}$ of $x\theta(x)$ or
\item
the lower-right $2n$-by-$2n$ minor matrix $(z_{ij})_{2\leq i,j\leq2n+1} \in I_{\GL_{2n}}^{+}$ of $x\theta(x)$
\end{itemize}
is an affine generic element of the pro-$p$ Iwahori subgroup $I_{\GL_{2n}}^{+}$ of $\GL_{2n}(F)$.
Since the following proof can proceed with the same argument in both cases, we consider only the former case; let us suppose that the upper-left $2n$-by-$2n$ minor matrix $x':=(z_{ij})_{1\leq i,j\leq2n} \in I_{\GL_{2n}}^{+}$ of $x\theta(x)$ is affine generic.

Let $g\in \GL_{2n+1}(F)$ be an element satisfying $gx\theta(g)^{-1}\in \tilde{I}^{+}_{\GL_{2n+1},a^{-1}}$.
Note that the subgroup $\tilde{I}^{+}_{\GL_{2n+1},a^{-1}}$ is stable under the $\theta$-conjugation of $\tilde{I}^{+}_{\GL_{2n+1},a^{-1}}$.
Furthermore, the $\theta$-affine genericity of $g$ is preserved by $\theta$-conjugation of any element of $\tilde{I}^{+}_{\GL_{2n+1},a^{-1}}$ (this can be easily seen by Proposition \ref{prop:Eisenstein}).
Therefore, in order to show the assertion, we may freely replace $g$ with any element in its $\tilde{I}^{+}_{\GL_{2n+1},a^{-1}}$-double coset.

Recall that the double cosets $I_{\GL_{2n+1}}^{+}\backslash \GL_{2n+1}(F)/I_{\GL_{2n+1}}^{+}$ can be described in terms of the Iwahori--Weyl group (see \cite[Section 2.1]{Oi:2018}); in this case, we may choose a representative of any $I_{\GL_{2n+1}}^{+}$-double coset to be an element of $T\cdot \mathfrak{S}_{2n+1}$.
Here, $T$ denotes the subgroup of diagonal matrices of $\GL_{2n+1}(F)$ and $\mathfrak{S}_{2n+1}$ denotes the permutation group of size $2n+1$ which is realized in $\GL_{2n+1}$ in a standard way, as permutation matrices (hence $\mathfrak{S}_{2n+1}$ normalizes $T$).
Note that the $\mathfrak{S}_{2n+1}$-part of the element $\varphi^{\GL_{2n+1}}_{a^{-1}}$ is given by a cyclic permutation of length $(2n+1)$.
Hence, by replacing $g$ furthermore with an element in its $\tilde{I}^{+}_{\GL_{2n+1},a^{-1}}$-double coset, we may assume that $g$ belongs to $T\cdot\mathfrak{S}_{2n}$.
Here, $\mathfrak{S}_{2n}$ is the subgroup of $\mathfrak{S}_{2n+1}$ stabilizing the letter ``$2n+1$'', which is embedded in the upper-left $2n$-by-$2n$-part of $\GL_{2n+1}$.

Since $gx\theta(g)^{-1}$ belongs to $\tilde{I}^{+}_{\GL_{2n+1},a^{-1}}$, so does 
\[
gx\theta(g)^{-1}\cdot \theta(gx\theta(g)^{-1})
=gx\theta(x)g^{-1}.
\]
By noting that $gx\theta(x)g^{-1}$ is pro-unipotent, $gx\theta(x)g^{-1}$ belongs to, in particular, $I_{\GL_{2n+1}}^{+}$.
We write $g=\diag(g',g'')$ with $g'\in\GL_{2n}(F)$ and $g''\in \GL_{1}(F)=F^{\times}$.
Then, from $gx\theta(x)g^{-1}\in I_{\GL_{2n+1}}^{+}$, we see that $g'x'g'^{-1}\in I_{\GL_{2n}}^{+}$.
Since $x'$ is an affine generic element of $I_{\GL_{2n}}^{+}$ as observed above, we can utilize \cite[Proposition 3.4]{MR3904769} to conclude that $g'$ must belong to $Z_{\GL_{2n}}I_{\GL_{2n}}\lan\varphi^{\GL_{2n}}_{1}\ran$, where $Z_{\GL_{2n}}$ denotes the center of $\GL_{2n}(F)$.
In particular, we may write 
\begin{align}\label{condition1}
g'=\diag(\varpi^{l},\ldots,\varpi^{l})t'h'\varphi^{\GL_{2n},s}_{1}
\end{align}
with $l\in\Z$, $t'=\diag(t_{1},\ldots,t_{2n})$ ($t_{i}\in k^{\times}$), $h'\in I_{\GL_{2n}}^{+}$, and $s\in\Z$ satisfying $0\leq s<2n$.
Let us take $t_{2n+1}\in k^{\times}$ and $r'\in\Z$ such that 
\begin{align}\label{condition2}
g''\in t_{2n+1}\varpi^{r'}\mcO^{\times}.
\end{align}
Then \eqref{condition1} and \eqref{condition2} imply that we have
\begin{align*}
\tilde{I}_{\GL_{2n+1}}g\tilde{I}_{\GL_{2n+1}}
&=\tilde{I}_{\GL_{2n+1}}\diag(g',g'')\tilde{I}_{\GL_{2n+1}}\\
&=\tilde{I}_{\GL_{2n+1}}t\cdot\diag(\varpi^{l},\ldots,\varpi^{l},\varpi^{r'})\cdot\diag(\varphi^{\GL_{2n}}_{1},1)^{s}\tilde{I}_{\GL_{2n+1}}\\
&=\tilde{I}_{\GL_{2n+1}}t\cdot\diag(1,\ldots,1,\varpi^{r'-l})\cdot\diag(\varphi^{\GL_{2n}}_{1},1)^{s}\tilde{I}_{\GL_{2n+1}},
\end{align*}
where $t:=\diag(t_{1},\ldots,t_{2n+1})$.
Therefore, in the following, we assume that $g$ itself equals $t\cdot\diag(1,\ldots,1,\varpi^{r})\cdot\diag(\varphi^{\GL_{2n}}_{1},1)^{s}$, where we put $r:=r'-l$.


By a simple computation, we can see that the $(2n+1)$-th column of $gx\theta(x)g^{-1}$ is given by
\[
\begin{pmatrix}
\varpi^{-r}z_{s+1,2n+1}t_{1}t_{2n+1}^{-1}\\
\vdots\\
\varpi^{-r}z_{2n,2n+1}t_{2n-s}t_{2n+1}^{-1}\\
\varpi^{-r+1}z_{1,2n+1}t_{2n+1-s}t_{2n+1}^{-1}\\
\vdots\\
\varpi^{-r+1}z_{s,2n+1}t_{2n}t_{2n+1}^{-1}\\
z_{2n+1,2n+1}
\end{pmatrix}.
\]
Here, as we have $gx\theta(x)g^{-1}\in I^{+}_{\GL_{2n+1}}$, the $(2n-s)$-th entry of this column $\varpi^{-r}z_{2n,2n+1}t_{2n-s}t_{2n+1}^{-1}$ must belong to $\mcO$.
(Recall that $0\leq s<2n$, hence $1\leq2n-s\leq 2n$).
Since $z_{2n,2n+1}$ belongs to $\mcO^{\times}$, this implies that we must have $r\leq0$.

As $gx\theta(g)^{-1}$ equals
\[
t\diag(1,\ldots,1,\varpi^{r})\diag(\varphi^{\GL_{2n}}_{1},1)^{s}x \diag(1,-\varphi^{\GL_{2n}}_{1})^{s}\diag(\varpi^{r},1,\ldots,1)\theta(t)^{-1},
\]
we have
\[
\val\circ\det(gx\theta(g)^{-1})=2r+2s.
\]
Hence the condition that $gx\theta(g)^{-1}\in ZI_{\GL_{2n+1}}^{+}\lan\varphi_{a^{-1}}^{\GL_{2n+1}}\ran$ implies that
\[
gx\theta(g)^{-1}\in I_{\GL_{2n+1}}\varphi_{a^{-1}}^{\GL_{2n+1},2r+2s}.
\tag{$\ast$}
\]
We can easily check that the $(2n+1,1)$-entry of $gx\theta(g)^{-1}$ is given by $\varpi^{2r}x_{2n+1,1}t_{2n+1}^{2}$.
By recalling that $x_{2n+1,1}$ belongs to $\mfp\smallsetminus\mfp^{2}$, we see that valuation of the $(2n+1,1)$-entry of $gx\theta(g)^{-1}$ is given by $2r+1$.
On the other hand, in general, the $(2n+1,1)$-th entry of any element of $I_{\GL_{2n+1}}\varphi_{a^{-1}}^{\GL_{2n+1},k}$ must belong to $\mfp^{\lfloor\frac{k-2}{2n+1}\rfloor+2}$ (for any $k\in\Z$):
\[
    \begin{tabular}{|c|c|c|c|c|c|c|c|c|} \hline
     $I_{\GL_{2n+1}}\varphi_{a^{-1}}^{\GL_{2n+1},k}$ & $k=0$ & $k=1$ & $k=2$ & $\cdots$ & $k=2n+2$ & $k=2n+3$ & $\cdots$ \\ \hline
     $(2n+1,1)$-entry & $\mfp$ & $\mfp$ & $\mfp^{2}$ & $\cdots$ & $\mfp^{2}$ & $\mfp^{3}$ & $\cdots$\\ \hline
    \end{tabular}
\]

Hence, so that the condition $(\ast)$ holds, we necessarily have
\[
2r+1\geq \Bigl\lfloor\frac{(2r+2s)-2}{2n+1}\Bigr\rfloor+2.
\]
In particular, we must have
\[
2r+1>\Bigl(\frac{(2r+2s)-2}{2n+1}-1\Bigr)+2,
\]
which is equivalent to the inequality $2rn>s-1$.
By recalling that $r\in\Z_{\leq0}$ and $0\leq s\leq2n$, we see that this holds only when $r=0$ and $s=0$.

Therefore we conclude that only elements $g$ of $ZT(q)I_{\GL_{2n+1}}^{+}\lan\varphi_{a^{-1}}^{\GL_{2n+1}}\ran$ can satisfy the condition $gx\theta(g)^{-1}\in \tilde{I}_{\GL_{2n+1}}$.
Finally, by looking at the diagonal entries, we easily see that such $g$ must belong to $ZT^{\theta}(q)I_{\GL_{2n+1}}^{+}\lan\varphi_{a^{-1}}^{\GL_{2n+1}}\ran$.
\end{proof}

\begin{prop}\label{prop:twisted character}
Let $x=(x_{ij})_{ij}\in I_{\GL_{2n+1}}^{+}$ be a $\theta$-affine generic element with $x\theta(x)=(z_{ij})_{ij}$.
Then we have
\[
\Theta_{\pi^{\GL_{2n+1}}_{a},\theta}(x)
=
\Kl^{n+1}_{\alpha}(\psi),
\]
where $\alpha$ is the image of $az_{1,2}^{2}\cdots z_{n,n+1}^{2}x_{2n+1,1}\varpi^{-1}\in\mcO^{\times}$ in the residue field $k$.
\end{prop}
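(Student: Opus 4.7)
The plan is to mimic the proof of Proposition \ref{prop:char-Sp}, but in the twisted setting and making essential use of Proposition \ref{prop:theta-norm}. Writing $\tilde{H} := ZI^{+}_{\GL_{2n+1}}\lan\varphi^{\GL_{2n+1}}_{a^{-1}}\ran$, the twisted analogue of the Frobenius character formula for compact inductions, combined with our normalization $I\colon f \mapsto f\circ\theta$ of the intertwiner, gives
\[
\Theta_{\pi^{\GL_{2n+1}}_{a},\theta}(x) = \sum_{g\tilde{H}} \tilde{\chi}^{\GL_{2n+1}}_{a}(gx\theta(g)^{-1}),
\]
where the sum runs over cosets $g\tilde{H}\in \GL_{2n+1}(F)/\tilde{H}$ satisfying $gx\theta(g)^{-1}\in\tilde{H}$. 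By Proposition \ref{prop:theta-norm}, every contributing $g$ lies in $T^{\theta}(q)\tilde{H}$; a short check using the fractional valuation of $\varphi^{\GL_{2n+1}}_{a^{-1}}$ and the Teichm\"uller nature of the entries of $T^{\theta}(q)$ shows that $\tilde{H}\cap T^{\theta}(q)=\{1\}$, so distinct $t\in T^{\theta}(q)$ yield distinct cosets. Moreover $\theta$ fixes $T^{\theta}(q)$ pointwise, hence $tx\theta(t)^{-1}=txt^{-1}\in I^{+}_{\GL_{2n+1}}$, and the formula reduces to $\sum_{t\in T^{\theta}(q)}\tilde{\chi}^{\GL_{2n+1}}_{a}(txt^{-1})$.

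The next step is to evaluate $\tilde{\chi}^{\GL_{2n+1}}_{a}(txt^{-1})$ via the defining affine-generic formula. Writing $t=\diag(t_{1},\ldots,t_{n},1,t_{n}^{-1},\ldots,t_{1}^{-1})$, direct inspection shows that the $(i,i+1)$-entry and the $(2n+1-i,2n+2-i)$-entry of $txt^{-1}$ share the common scalar $t_{i}/t_{i+1}$ for $1\leq i\leq n-1$; the middle pair of positions $(n,n+1)$ and $(n+1,n+2)$ share the scalar $t_{n}$; and the $(2n+1,1)$-entry equals $t_{1}^{-2}x_{2n+1,1}$. Grouping these symmetric pairs and invoking Lemma \ref{lem:Iwahori}(2), which identifies $x_{i,i+1}+x_{2n+1-i,2n+2-i}$ with $z_{i,i+1}$ modulo $\mfp$ for each $1\leq i\leq n$, we obtain
\[
\Theta_{\pi^{\GL_{2n+1}}_{a},\theta}(x) = \sum_{t_{1},\ldots,t_{n}\in k^{\times}} \psi\Bigl(\tfrac{t_{1}}{t_{2}}\ol{z_{1,2}} + \cdots + \tfrac{t_{n-1}}{t_{n}}\ol{z_{n-1,n}} + t_{n}\ol{z_{n,n+1}} + \tfrac{a}{t_{1}^{2}}\ol{x_{2n+1,1}\varpi^{-1}}\Bigr).
\]

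Finally, the change of variables $s_{i}:=(t_{i}/t_{i+1})\ol{z_{i,i+1}}$ for $1\leq i\leq n-1$, $s_{n}:=t_{n}\ol{z_{n,n+1}}$, and $s_{n+1}:=a\ol{x_{2n+1,1}\varpi^{-1}}/t_{1}^{2}$ is a bijection of $(k^{\times})^{n}$ onto the locus $\{(s_{1},\ldots,s_{n+1})\in(k^{\times})^{n+1}\colon s_{1}^{2}\cdots s_{n}^{2}\,s_{n+1}=\alpha\}$, under which the sum becomes $\sum\psi(s_{1}+\cdots+s_{n+1})$ on this locus. Applying the bijection $s_{i}\mapsto s_{i}^{1/2}$ on $k^{\times}$ for $1\leq i\leq n$ — valid because $p=2$ makes squaring the Frobenius, hence bijective — combined with $\psi(s^{1/2})=\psi(s)$ (from the Frobenius-invariance $\psi(x^{2})=\psi(x)$), transforms the defining constraint into $s_{1}\cdots s_{n+1}=\alpha$ while preserving the summand, yielding $\Kl_{\alpha}^{n+1}(\psi)$. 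The main obstacle I anticipate is the very first step: pinning down the precise form of the twisted Frobenius formula with our chosen intertwiner so that no spurious scalar arises. Once this is in place, Proposition \ref{prop:theta-norm} makes the reduction to $T^{\theta}(q)$ essentially automatic, and the residue-field manipulation at the end parallels the Kloosterman-sum identification in the proof of Proposition \ref{prop:char-Sp}.
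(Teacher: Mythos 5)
Your proof is correct and follows essentially the same route as the paper's: invoke the twisted Frobenius formula for the compact induction, reduce the index set to $T^{\theta}(q)$ via Proposition~\ref{prop:theta-norm}, group the symmetric pairs of entries using Lemma~\ref{lem:Iwahori}(2), and use the Frobenius-invariance of $\psi$ (valid since $p=2$) to turn the constraint $s_{1}^{2}\cdots s_{n}^{2}s_{n+1}=\alpha$ into $s_{1}\cdots s_{n+1}=\alpha$, yielding $\Kl_{\alpha}^{n+1}(\psi)$. The small details you add — that $\tilde{H}\cap T^{\theta}(q)=\{1\}$ so the cosets are distinct, and that $\theta$ fixes $T^{\theta}(q)$ pointwise — are left implicit in the paper but are correct; the normalization issue you flag is handled there simply by citing \cite[Section 3.1]{MR3904769}.
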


\begin{proof}
By the Frobenius formula for the $\theta$-twisted character (\cite[I.6.2 Th\'eor\`eme]{MR3632513}), for any $\theta$-regular $\theta$-semisimple element $x\in\GL_{2n+1}(F)$, the $\theta$-twisted character $\Theta_{\pi^{\GL_{2n+1}}_{a},\theta}(x)$ normalized with respect to the intertwiner taken as above is given by
\[
\Theta_{\pi^{\GL_{2n+1}}_{a},\theta}(x)
=
\sum_{\begin{subarray}{c} g\in G/ZI_{\GL_{2n+1}}^{+}\lan\varphi_{a^{-1}}^{\GL_{2n+1}}\ran \\gx\theta(g)^{-1}\in ZI_{\GL_{2n+1}}^{+}\lan\varphi_{a^{-1}}^{\GL_{2n+1}}\ran\end{subarray}}
\tilde{\chi}^{\GL_{2n+1}}_{a}(gx\theta(g)^{-1})
\]
as long as the sum is finite (cf.\ \cite[Section 3.1]{MR3904769}).

Therefore, when $x=(x_{ij})_{ij}\in I_{\GL_{2n+1}}^{+}$ is a $\theta$-affine generic element with $x\theta(x)=(z_{ij})_{ij}$, Proposition \ref{prop:theta-norm} implies that 
\begin{align*}
\Theta_{\pi^{\GL_{2n+1}}_{a},\theta}(x)
&=\sum_{t\in T^{\theta}(q)}\tilde{\chi}^{\GL_{2n+1}}_{a}(tx\theta(t)^{-1})\\
&=\sum_{t_{1},\ldots,t_{n}\in k^{\times}}\psi\Bigl(\frac{t_{1}}{t_{2}}x_{1,2}+\frac{t_{2}}{t_{3}}x_{2,3}+\cdots+\frac{t_{2}^{-1}}{t_{1}^{-1}}x_{2n,2n+1}+\frac{t_{1}^{-1}}{t_{1}}ax_{2n+1,1}\varpi^{-1}\Bigr)\\
&=\sum_{t_{1},\ldots,t_{n}\in k^{\times}}\psi\Bigl(\frac{t_{1}}{t_{2}}z_{1,2}+\cdots+\frac{t_{n-1}}{t_{n}}z_{n-1,n}+t_{n}z_{n,n+1}+t_{1}^{-2}ax_{2n+1,1}\varpi^{-1}\Bigr)\\
&=\sum_{\begin{subarray}{c}s_{1},\ldots,s_{n},s_{n+1}\in k^{\times} \\ s_{1}^{2}\cdots s_{n}^{2}s_{n+1}=\alpha \end{subarray}}\psi(s_{1}+\cdots+s_{n-1}+s_{n}+s_{n+1}).
\end{align*}
Here, in the third equality, we used Lemma \ref{lem:Iwahori} (2).
For the same reason as in Proposition \ref{prop:char-Sp}, this equals $\Kl^{n+1}_{\alpha}(\psi)$.
\end{proof}

\section{Endoscopic lifting of simple supercuspidal representations}\label{sec:descent}

\subsection{Local Langlands correspondence for $\Sp_{2n}$}\label{subsec:LLC}
For any split connected reductive group $\G$ over $F$, we let $\hat{\G}$ denote the Langlands dual group.
We say that a homomorphism $\phi\colon W_{F}\times\SL_{2}(\C)\rightarrow \hat\G$ is an \textit{$L$-parameter} of $\G$ if $\phi$ is smooth on $W_{F}$ and the restriction $\phi|_{\SL_{2}(\C)}\colon\SL_{2}(\C)\rightarrow\hat{\G}$ is algebraic.

Recall that the Langlands dual group of $\Sp_{2n}$ is given by $\SO_{2n+1}(\C)$.
Hence, when $\G=\Sp_{2n}$, an $L$-parameter of $\G$ is nothing but a $(2n+1)$-dimensional self-dual orthogonal representation of $W_{F}\times\SL_{2}(\C)$.
Moreover, in fact, two $L$-parameters of $\Sp_{2n}$ are conjugate under $\SO_{2n+1}(\C)$ if and only if they are conjugate under $\GL_{2n+1}(\C)$, in which $\SO_{2n+1}(\C)$ is embedded (\cite[Theorem 8.1 (ii)]{MR3202556}).
Thus the $\hat{\G}$-conjugacy class of an $L$-parameter of $\G$ is nothing but the isomorphism class of a $(2n+1)$-dimensional self-dual orthogonal representation of $W_{F}\times\SL_{2}(\C)$ when $\G=\Sp_{2n}$.

We put
\begin{itemize}
\item
$\Pi_{\temp}(\Sp_{2n})$ to be the set of equivalence classes of irreducible tempered representations of $\Sp_{2n}(F)$, and
\item
$\Phi_{\temp}(\Sp_{2n})$ to be the set of $\hat{\G}$-conjugacy classes of tempered (i.e., the image of $W_{F}$ is bounded in $\hat{\G}$) $L$-parameters of $\Sp_{2n}$.
\end{itemize}
For any $\phi\in\Phi_{\temp}(\Sp_{2n})$, we define a finite group $\mathcal{S}_{\phi}$ to be the group of connected components of the centralizer group of $\mathrm{Im}(\phi)$ in $\SO_{2n+1}(\C)$:
\begin{align*}
\mathcal{S}_{\phi}
&:=\pi_{0}\bigl(\Cent_{\SO_{2n+1}(\C)}(\mathrm{Im}(\phi))\bigr)\\
&=\Cent_{\SO_{2n+1}(\C)}(\mathrm{Im}(\phi))/\Cent_{\SO_{2n+1}(\C)}(\mathrm{Im}(\phi))^{\circ}.
\end{align*}
Note that here we implicitly fix a representative of the $\hat{\G}$-conjugacy class $\phi$ and again write $\phi$ for it by abuse of notation.

The \textit{local Langlands correspondence for tempered representations of $\Sp_{2n}(F)$}, which was established by Arthur (\cite[Theorems 1.5.1 and 2.2.1]{MR3135650}), asserts that there exists a natural map
\[
\LLC_{\Sp_{2n}}\colon\Pi_{\temp}(\Sp_{2n})\rightarrow\Phi_{\temp}(\Sp_{2n}),
\]
which is surjective and with finite fibers.
In other words, by letting $\Pi_{\phi}^{\Sp_{2n}}$ be the fiber at an $L$-parameter $\phi$, we have a natural partition
\[
\Pi_{\temp}(\Sp_{2n})
=
\bigsqcup_{\phi\in\Phi_{\temp}(\Sp_{2n})}\Pi_{\phi}^{\Sp_{2n}}.
\]
For any $\phi\in\Phi_{\temp}(\Sp_{2n})$, the finite set $\Pi_{\phi}^{\Sp_{2n}}$ is called an \textit{$L$-packet} and equipped with a bijective map (with respect to a chosen Whittaker datum of $\Sp_{2n}$) to the set $\mathcal{S}_{\phi}^{\vee}$ of irreducible characters of $\mathcal{S}_{\phi}$.
Each $L$-packet $\Pi_{\phi}^{\Sp_{2n}}$ is characterized via the \textit{endoscopic character relation}, which is explained as follows.
Regarding $\phi$ as a tempered $L$-parameter of $\GL_{2n+1}$, we obtain an irreducible tempered representation $\pi_{\phi}$ of $\GL_{2n+1}(F)$ corresponding to $\phi$ under the local Langlands correspondence for general linear groups.
(The representation $\pi_{\phi}$ is called the \textit{endoscopic lift} of $\Pi_{\phi}^{\Sp_{2n}}$.)
Since $\phi$ is self-dual, so is $\pi_{\phi}$, hence $\pi_{\phi}$ is $\theta$-stable.
Thus we can consider the $\theta$-twisted character $\Theta_{\pi_{\phi},\theta}$ of $\pi_{\phi}$.
Then the $L$-packet $\Pi_{\phi}^{\Sp_{2n}}$ is characterized as the unique finite subset of $\Pi_{\temp}(\Sp_{2n})$ satisfying the identity (``endoscopic character relation'')
\[
\Theta_{\pi_{\phi},\theta}(g)
=
\sum_{\begin{subarray}{c}h\in\Sp_{2n}(F)\\ h\leftrightarrow g \end{subarray}}
\sum_{\pi\in\Pi_{\phi}^{\Sp_{2n}}}\Theta_{\pi}(h)
\]
for any strongly $\theta$-regular $\theta$-semisimple element $g$ of $\GL_{2n+1}(F)$, where the first sum runs over the stable conjugacy classes of strongly regular semisimple elements of $\Sp_{2n}(F)$ which are \textit{norms} of $g$ in the sense of twisted endoscopy (\cite[Section 3.3]{MR1687096}).

\begin{rem}
\begin{enumerate}
\item
As explained in Section \ref{subsec:theta-char-GL}, the notion of the $\theta$-twisted character depends on the choice of an intertwiner $\pi_{\phi}\cong\pi_{\phi}^{\theta}$.
In the above identity, we implicitly adopt the \textit{Whittaker normalization} of an intertwiner by fixing a $\theta$-stable Whittaker datum of $\GL_{2n+1}$.
On the other hand, recall that we chose an explicit intertwiner for each $\theta$-stable simple supercuspidal representation $\pi^{\GL_{2n+1}}_{a}$ of $\GL_{2n+1}(F)$.
A priori, it is nontrivial whether these two choices of an intertwiner coincide.
However, we can check the coincidence easily; see \cite[Section 5.1]{MR3904769} for the details.
(Note that the explanation in \cite[Section 5.1]{MR3904769} is for $\GL_{2n}$ with odd $p$, but completely the same argument works in our setting.)
\item
In general, the endoscopic character relation also involves a subtle correction term called the \textit{(Langlands--Kottwitz--Shelstad) transfer factor}.
However, it is known that the transfer factor is always trivial in our setting.
Moreover, it can be checked easily that a norm in $\Sp_{2n}(F)$ of a strongly $\theta$-regular $\theta$-semisimple element of $\GL_{2n+1}(F)$ is unique (if exists) up to stable conjugacy.
In other words, the first index set of the endoscopic character relation is a singleton whenever it is not empty.
See \cite[Proposition 7.2]{Oi:2018} for the details.
\end{enumerate}
\end{rem}

\subsection{Descent of supercuspidal representations}\label{subsec:descent}
Now let us consider the $\theta$-stable simple supercuspidal representation $\pi^{\GL_{2n+1}}_{a}$ of $\GL_{2n+1}(F)$ ($a\in k^{\times}$).
Let $\phi_{a}$ be the $L$-parameter of $\GL_{2n+1}$ which corresponds to $\pi^{\GL_{2n+1}}_{a}$ under the local Langlands correspondence for $\GL_{2n+1}$.
As $\pi^{\GL_{2n+1}}_{a}$ is supercuspidal, $\phi_{a}$ is trivial on the $\SL_{2}(\C)$-part and irreducible as a representation of $W_{F}$.
Since $\pi^{\GL_{2n+1}}_{a}$ is self-dual, so is $\phi_{a}$.
Furthermore, as the central character of $\pi^{\GL_{2n+1}}_{a}$ is trivial, the determinant character of $\phi_{a}$ is trivial.
Therefore, by noting that $\phi_{a}$ is irreducible, we may assume that the image of $\phi_{a}$ is contained in $\SO_{2n+1}(\C)$, which is equal to the Langlands dual group of $\Sp_{2n}$.
Accordingly, we may regard $\phi_{a}$ as a tempered $L$-parameter of $\Sp_{2n}$ and get a tempered $L$-packet $\Pi_{\phi_{a}}^{\Sp_{2n}}$.

\begin{prop}\label{prop:supercuspidality}
The $L$-packet $\Pi_{\phi_{a}}^{\Sp_{2n}}$ is a singleton consisting of a supercuspidal representation of $\Sp_{2n}(F)$.
\end{prop}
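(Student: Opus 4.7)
The plan is to verify the proposition in two steps, matching its two claims: first that the $L$-packet $\Pi_{\phi_{a}}^{\Sp_{2n}}$ is a singleton, and second that its unique member is supercuspidal. Both pieces should follow quickly from the structural properties of $\phi_{a}$ recalled just before the statement (irreducibility, triviality on $\SL_{2}(\C)$, self-duality, and trivial determinant), together with inputs from Arthur's theory and a result of Xu already cited in the introduction.

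For the singleton part, I would compute $\mathcal{S}_{\phi_{a}}$ directly. Since $\phi_{a}$, viewed as a $(2n+1)$-dimensional representation of $W_{F}\times\SL_{2}(\C)$, is irreducible, Schur's lemma gives
\[
\Cent_{\GL_{2n+1}(\C)}\bigl(\mathrm{Im}(\phi_{a})\bigr)=\C^{\times}\cdot I_{2n+1}.
\]
Intersecting with $\SO_{2n+1}(\C)$ yields $\{\pm I_{2n+1}\}\cap\SO_{2n+1}(\C)=\{I_{2n+1}\}$, since $\det(-I_{2n+1})=(-1)^{2n+1}=-1$. Therefore $\Cent_{\SO_{2n+1}(\C)}(\mathrm{Im}(\phi_{a}))$ is trivial, and a fortiori $\mathcal{S}_{\phi_{a}}=1$. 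Via Arthur's bijection $\Pi_{\phi_{a}}^{\Sp_{2n}}\leftrightarrow\mathcal{S}_{\phi_{a}}^{\vee}$ recalled in Section \ref{subsec:LLC}, this forces $\Pi_{\phi_{a}}^{\Sp_{2n}}$ to consist of a single representation.

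For the supercuspidality of this unique element, I would invoke the result of Xu \cite{MR3713922} already discussed in the introduction: if $\phi$ is a direct sum of pairwise inequivalent irreducible self-dual representations of $W_{F}\times\SL_{2}(\C)$ which is trivial on $\SL_{2}(\C)$, then every member of $\Pi_{\phi}^{\Sp_{2n}}$ is supercuspidal. Our $\phi_{a}$ satisfies exactly these hypotheses: it is irreducible (so the ``pairwise inequivalent'' condition is vacuous with $m=1$), self-dual and orthogonal, and trivial on $\SL_{2}(\C)$ by construction. Applying Xu's theorem gives the desired supercuspidality.

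I do not anticipate any genuine obstacle here: the proposition is essentially a formal consequence of the structure of $\phi_{a}$ combined with Arthur's parametrization and Xu's supercuspidality criterion, both of which are black-boxed. The only small subtlety to mention is the parity argument $\det(-I_{2n+1})=-1$, which is exactly what rules out a nontrivial component group and is peculiar to the odd-dimensional orthogonal dual group of $\Sp_{2n}$; this is precisely the mechanism behind the existence of singleton supercuspidal $L$-packets highlighted throughout the introduction.
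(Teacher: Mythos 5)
Your argument is correct and matches the paper's approach in substance: both derive the singleton property from the irreducibility of $\phi_{a}$ via the triviality of $\mathcal{S}_{\phi_{a}}$, and both appeal to Xu's criterion for the supercuspidality of every member of the packet. The paper's proof is terser, simply citing Xu \cite{MR3713922} and pointing to \cite[Proposition 5.7]{MR3904769} for details; you have usefully spelled out the Schur's lemma computation, including the key parity observation that $-I_{2n+1}\notin\SO_{2n+1}(\C)$, which the paper relegates to the discussion in its introduction.
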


\begin{proof}
By the irreducibility of $\phi_{a}$, this follows from a result of Xu on a parametrization of supercuspidal representations in an $L$-packet (\cite{MR3713922}).
See \cite[Proposition 5.7]{MR3904769} for the details.
\end{proof}

In the following, let us write $\pi^{\Sp_{2n}}$ for the supercuspidal representation of $\Sp_{2n}(F)$ which belongs to the singleton $L$-packet $\Pi_{\phi_{a}}^{\Sp_{2n}}$.

\subsection{Depth bound of the descended representation}

\begin{prop}\label{prop:char-at-h}
Let $h_{u}\in I^{+}_{\Sp_{2n}}$ be an affine generic element as in Example \ref{example} \eqref{hu}.
Then we have 
\[
\Theta_{\pi^{\Sp_{2n}}}(h_{u})
=\Kl^{n+1}_{au}(\psi).
\]
\end{prop}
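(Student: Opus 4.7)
The plan is to apply the twisted endoscopic character relation (recalled in Section \ref{subsec:LLC}) to the $\theta$-affine generic element $g_u \in I_{\GL_{2n+1}}^+$ constructed in Example \ref{example}, with $h_u$ playing the role of its norm. Since $g_u$ is $\theta$-affine generic, Lemma \ref{lem:regular} gives that $g_u$ is strongly $\theta$-regular $\theta$-semisimple, so the relation
\[
\Theta_{\pi_{a}^{\GL_{2n+1}},\theta}(g_u)
= \sum_{\begin{subarray}{c} h\in\Sp_{2n}(F)\\ h\leftrightarrow g_u\end{subarray}} \sum_{\pi\in\Pi_{\phi_a}^{\Sp_{2n}}}\Theta_{\pi}(h)
\]
applies. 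By Proposition \ref{prop:supercuspidality} the packet is the singleton $\{\pi^{\Sp_{2n}}\}$, and the norm index set contains at most one stable class, so I only need to evaluate both sides and match.

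For the left-hand side I would use Proposition \ref{prop:twisted character}. Writing $(z_{ij})_{ij}=g_u\theta(g_u)$, the explicit computation in Example \ref{example} gives $z_{1,2}=z_{2,3}=\cdots=z_{n,n+1}=1$, while the $(2n+1,1)$-entry of $g_u$ is $\varpi u$. Therefore
\[
\alpha \;=\; \overline{a\cdot 1^{2}\cdots 1^{2}\cdot \varpi u\cdot\varpi^{-1}} \;=\; au \in k^{\times},
\]
so $\Theta_{\pi_{a}^{\GL_{2n+1}},\theta}(g_u) = \Kl^{n+1}_{au}(\psi)$.

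For the right-hand side I would argue that $h_u$ is (a representative of) the norm of $g_u$. Observe that $g_u\theta(g_u)$ is block upper-triangular,
\[
g_u\theta(g_u) \;=\; \begin{pmatrix} h_u & * \\ 0 & 1 \end{pmatrix},
\]
so its characteristic polynomial factors as $(T-1)\cdot\mathrm{charpoly}(h_u)$. Proposition \ref{prop:Eisenstein} identifies this second factor as Eisenstein in $(T-1)$, hence irreducible over $F$; in particular $h_u$ is a regular semisimple element of $\Sp_{2n}(F)$ (it lies in $\Sp_{2n}(F)$ by Lemma \ref{lem:g-h}) and its stable class is determined by its characteristic polynomial. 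In the twisted endoscopy for $(\GL_{2n+1},\theta)$ and $\Sp_{2n}$, the norm map sends the $\theta$-conjugacy class of $g_u$ to the stable class in $\Sp_{2n}(F)$ whose characteristic polynomial is $\mathrm{charpoly}(g_u\theta(g_u))/(T-1)$, and $h_u$ realises exactly this. Combining this identification with the character relation and the singleton packet, I conclude $\Theta_{\pi^{\Sp_{2n}}}(h_u)=\Kl^{n+1}_{au}(\psi)$.

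The main technical point I expect to verify carefully is the last identification of $h_u$ as a norm of $g_u$: one must ensure that $g_u$ is indeed a $\theta$-norm (which is automatic from $1$ being an eigenvalue of $g_u\theta(g_u)$, established in Lemma \ref{lem:char-pol}(2)) and that the resulting stable class in $\Sp_{2n}(F)$ is the one represented by $h_u$. The regular semisimplicity of $h_u$, a consequence of the Eisenstein (hence irreducible) characteristic polynomial, is what makes this identification clean, since then the stable conjugacy class is detected by the characteristic polynomial alone. Once these ingredients are in place, the character formula follows by equating the two sides.
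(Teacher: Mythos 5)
Your proposal is correct and follows essentially the same approach as the paper: apply the twisted endoscopic character relation to the pair $(g_u, h_u)$, compute the left-hand side with Proposition~\ref{prop:twisted character} (reading off $z_{1,2}=\cdots=z_{n,n+1}=1$ and the $(2n+1,1)$-entry $\varpi u$ of $g_u$ from Example~\ref{example}), and identify $h_u$ as the norm of $g_u$ via the characteristic-polynomial factorization. The paper states the norm identification more briefly, while you spell out that the Eisenstein characteristic polynomial of $h_u$ forces regular semisimplicity and pins down the stable class, but the substance is the same.
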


\begin{proof}
By the endoscopic character relation $\pi^{\GL_{2n+1}}_{a}$ and $\Pi_{\phi_{a}}^{\Sp_{2n}}$, we have
\[
\Theta_{\pi^{\GL_{2n+1}}_{a},\theta}(g)
=\Theta_{\pi^{\Sp_{2n}}}(h)
\]
for any strongly $\theta$-regular $\theta$-semisimple element $g\in\GL_{2n+1}(F)$ and its norm $h\in\Sp_{2n}(F)$ (see Section \ref{subsec:LLC}).
Let $g_{u}\in\GL_{2n+1}(F)$ be the element as in Example \ref{example} \eqref{gu} and we take $(g,h)$ in this equality to be $(g_{u},h_{u})$.
This is possible since the characteristic polynomial of $g_{u}\theta(g_{u})$ is the product of that of $h_{u}$ and $(T-1)$, where $T$ denotes the variable of the characteristic polynomial, hence $h_{u}$ is a norm of $g_{u}$ in the sense of twisted endoscopy (cf.\ \cite[Section 4.1]{MR3904769}).

Therefore we have
\[
\Theta_{\pi^{\GL_{2n+1}}_{a},\theta}(g_{u})
=\Theta_{\pi^{\Sp_{2n}}}(h_{u}).
\]
If we write $g_{u}\theta(g_{u})=(z_{ij})_{ij}$, then we have $(z_{1,2},\ldots,z_{n,n+1})=(1,\ldots,1)$.
Moreover, the $(2n+1,1)$-entry of $g_{u}$ is given by $\varpi u$.
Hence, by Proposition \ref{prop:twisted character}, we get
\[
\Theta_{\pi^{\GL_{2n+1}}_{a},\theta}(g_{u})
=\Kl^{n+1}_{au}(\psi).
\]
\end{proof}

\begin{prop}\label{prop:char-at-y}
Let $y=(y_{ij})_{ij}\in I^{+}_{\Sp_{2n}}$ be an affine generic element.
Then we have either $\Theta_{\pi^{\Sp_{2n}}}(y)=0$ or 
\[
\Theta_{\pi^{\Sp_{2n}}}(y)
=\Kl^{n+1}_{-ay_{1,2}^{2}\cdots y_{n-1,n}^{2}y_{n,n+1}y_{2n,1}\varpi^{-1}}(\psi).
\]
\end{prop}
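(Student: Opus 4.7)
The strategy is to invoke the endoscopic character relation of Section \ref{subsec:LLC}: since $\Pi_{\phi_{a}}^{\Sp_{2n}} = \{\pi^{\Sp_{2n}}\}$ is a singleton by Proposition \ref{prop:supercuspidality}, the character $\Theta_{\pi^{\Sp_{2n}}}$ is a stable distribution and
\[
\Theta_{\pi^{\Sp_{2n}}}(y) = \Theta_{\pi^{\GL_{2n+1}}_{a}, \theta}(g)
\]
for any strongly $\theta$-regular $\theta$-semisimple lift $g \in \GL_{2n+1}(F)$ admitting $y$ (up to stable conjugacy) as a norm. I would then split the argument according to whether $y$ is stably conjugate to the element $h_{u}$ of Example \ref{example} for some $u \in k^{\times}$.

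\emph{Kloosterman case:} if $y$ is stably conjugate to some $h_{u}$, then stability together with Proposition \ref{prop:char-at-h} gives
\[
\Theta_{\pi^{\Sp_{2n}}}(y) = \Theta_{\pi^{\Sp_{2n}}}(h_{u}) = \Kl^{n+1}_{au}(\psi).
\]
To identify $au$ with the claimed parameter, I would match characteristic polynomials. Applying Proposition \ref{prop:Eisenstein} to the $\theta$-affine generic lift $g_{u}$ of $h_{u}$ computes the constant term (modulo $\mfp^{2}$) of the Eisenstein factor of $p_{g_{u},\theta}(T)$ in terms of $u$, while an analogous expansion of $P_{y}(T)$ in powers of $(T-1)$---along the lines of Lemma \ref{lem:char-pol}---evaluates the same invariant directly as $-y_{1,2}^{2}\cdots y_{n-1,n}^{2} y_{n,n+1} y_{2n,1}$ modulo $\mfp^{2}$. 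Matching these yields $au \equiv -a y_{1,2}^{2}\cdots y_{n-1,n}^{2} y_{n,n+1} y_{2n,1}\varpi^{-1}$ in $k^{\times}$; the sign is irrelevant since $p=2$.

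\emph{Zero case:} if $y$ is not stably conjugate to any $h_{u}$, I claim $\Theta_{\pi^{\Sp_{2n}}}(y) = 0$. Take any $\GL_{2n+1}$-lift $g$ of $y$; by the Frobenius formula for compact induction,
\[
\Theta_{\pi^{\GL_{2n+1}}_{a}, \theta}(g) = \sum_{\substack{g' \in \GL_{2n+1}(F)/\widetilde{I} \\ g'g\theta(g')^{-1} \in \widetilde{I}}} \tilde{\chi}^{\GL_{2n+1}}_{a}(g'g\theta(g')^{-1}),
\]
where $\widetilde{I} := ZI^{+}_{\GL_{2n+1}}\lan \varphi^{\GL_{2n+1}}_{a^{-1}}\ran$. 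Any $g'$ occurring here, after a further $\theta$-conjugation bringing $g'g\theta(g')^{-1}$ into the $\theta$-affine generic stratum of $I^{+}_{\GL_{2n+1}}$, would by Proposition \ref{prop:Eisenstein} force the Eisenstein constant term of $P_{y}(T)$ in the $(T-1)$-variable to agree with that of $P_{h_{u}}(T)$ for some $u \in k^{\times}$---hence $y$ stably conjugate to $h_{u}$, contradicting the assumption. Thus the summation is empty and the twisted character vanishes. The main obstacle is precisely this support-vanishing step: one must extend the double-coset / Iwahori--Weyl analysis of Proposition \ref{prop:theta-norm} to arbitrary elements of $\widetilde{I}$ (not only the $\theta$-affine generic stratum) and to arbitrary $\varphi^{\GL_{2n+1}}_{a^{-1}}$-shifts, in order to justify that every candidate $g'g\theta(g')^{-1}$ reduces to the $\theta$-affine generic situation where Proposition \ref{prop:Eisenstein} applies.
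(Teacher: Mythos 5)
Your overall framework---invoke stability of the singleton packet, transport the computation to the $\theta$-twisted character of $\pi^{\GL_{2n+1}}_a$, and control the constant term of the characteristic polynomial---is the right setup, and your ``Kloosterman case'' is correct as far as it goes. But the dichotomy you set up is not the one that works, and your ``zero case'' argument contains a circular inference.

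The decisive problem is the step ``force the Eisenstein constant term of $P_y(T)$ to agree with that of $P_{h_u}(T)$ for some $u\in k^\times$ --- hence $y$ stably conjugate to $h_u$.'' The first half of this is vacuous: as $u$ runs over $k^\times$ the constant term of the Eisenstein factor of the characteristic polynomial of $h_u$ (mod $\mfp^2$) sweeps out all of $k^\times$, so for \emph{every} affine generic $y$ there is some $u$ with matching constant term. The second half is simply false: agreement of one coefficient of the characteristic polynomial does not give equality of characteristic polynomials, hence does not give stable conjugacy. So the contradiction you are after never materializes, and the argument that the twisted character vanishes off the $h_u$-locus collapses. Worse, the dichotomy itself is wrong: the set of affine generic $y$ on which $\Theta_{\pi^{\Sp_{2n}}}$ is nonzero is much larger than the (finite) union of the stable classes of the $h_u$, so even a corrected ``zero case'' would fail on most of $I^+_{\Sp_{2n}}$.

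The paper avoids the case split on stable conjugacy classes entirely. It lifts $y$ to a strongly $\theta$-regular $\theta$-semisimple $x$ and observes: if $x$ is not $\theta$-conjugate into $\widetilde{I}=ZI^+_{\GL_{2n+1}}\lan\varphi^{\GL_{2n+1}}_{a^{-1}}\ran$, the Frobenius formula already gives $\Theta_{\pi^{\GL_{2n+1}}_a,\theta}(x)=0$; otherwise one reduces, via $Z$-translation together with $\theta$-conjugation by $\varphi^{\GL_{2n+1}}_{a^{-1}}$ to arrange $\val\circ\det(x)=0$, so $x$ may be taken in $I^+_{\GL_{2n+1}}$. Then, since $y$ is affine generic its characteristic polynomial is Eisenstein in $(T-1)$, and Proposition \ref{prop:Eisenstein} forces $x$ to be $\theta$-affine generic; Proposition \ref{prop:twisted character} gives the value, and the index is identified with $-ay_{1,2}^2\cdots y_{n-1,n}^2y_{n,n+1}y_{2n,1}\varpi^{-1}$ by the Eisenstein constant-term computation. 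Notice that this argument does \emph{not} require the ``extension of Proposition \ref{prop:theta-norm} to arbitrary elements of $\widetilde{I}$'' that you flag as an obstacle: after the normalization $\val\circ\det(x)=0$ one is back in the $\theta$-affine generic stratum, precisely because the norm $y$ is assumed affine generic. This is the ingredient your zero case is missing, and it also explains why a case split on stable conjugacy classes is the wrong lever.
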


\begin{proof}
Recall that any affine generic element $y$ of $I^{+}_{\Sp_{2n}}$ is (strongly) regular semisimple.
Moreover, such a $y$ is necessarily elliptic.
Indeed, in order to check this, it suffices to show that the centralizer group $\Cent_{\Sp_{2n}(F)}(y)$ of $y$ in $\Sp_{2n}(F)$ is compact.
Note that
\[
\Cent_{\Sp_{2n}(F)}(y)
:=\{h\in\Sp_{2n}(F)\mid hyh^{-1}=y\}
\subset \{h\in\Sp_{2n} \mid hyh^{-1}\in I^{+}_{\Sp_{2n}}\}.
\]
By \cite[Lemma 3.8]{Oi:2018}, which is valid even when $p=2$, the right-hand side is given by $I_{\Sp_{2n}}$.
Since $I_{\Sp_{2n}}$ is compact, so is $\Cent_{\Sp_{2n}(F)}(y)$.

In general, it is known that the elliptic (strong) regularity of $y$ implies that there exists a (strongly) $\theta$-regular $\theta$-semisimple element $x$ of $\GL_{2n+1}(F)$ such that $y$ is a norm of $x$.
(This follows from the adjoint relation of the transfer factor; see the proof of \cite[Proposition 2.1.1]{MR3135650}.)
Then the endoscopic character relation implies that
\[
\Theta_{\pi^{\GL_{2n+1}}_{a},\theta}(x)
=\Theta_{\pi^{\Sp_{2n}}}(y).
\]
If $x$ is not $\theta$-conjugate to an element of $ZI_{\GL_{2n+1}}^{+}\lan\varphi_{a^{-1}}^{\GL_{2n+1}}\ran$, then the Frobenius formula for the $\theta$-twisted character (\cite[I.6.2 Th\'eor\`eme]{MR3632513}) implies that $\Theta_{\pi^{\GL_{2n+1}}_{a},\theta}(x)$ is zero.
Thus let us consider the case where $x$ belongs to $ZI_{\GL_{2n+1}}^{+}\lan\varphi_{a^{-1}}^{\GL_{2n+1}}\rangle$.

Note that, for any $z\in Z\cong F^{\times}$, the product $zx$ is also a strongly $\theta$-regular $\theta$-semisimple element of $\GL_{2n+1}(F)$ which has $y$ as its norm.
When $z=\varpi$, we have 
\[
\val\circ\det(zx)=(2n+1)+\val\circ\det(x).
\]
On the other hand, we have 
\[
\val\circ\det(\varphi_{a^{-1}}^{\GL_{2n+1}}x\theta(\varphi_{a^{-1}}^{\GL_{2n+1}})^{-1})=2+\val\circ\det(x).
\]
Therefore, by combining $Z$-translation and $\theta$-conjugacy, we may furthermore assume that $\val\circ\det(x)=0$; this means that $x$ belongs to $Z(q)I_{\GL_{2n+1}}^{+}$.
Here, $Z(q)$ denotes the subgroup of $Z$ consisting of elements of finite prime-to-$p$ order.
Again by translating $x$ via $Z(q)$, we may suppose that $x\in I_{\GL_{2n+1}}^{+}$.

As $y$ is a norm of $x$, the characteristic polynomial of $x\theta(x)$ is given by the product of $(T-1)$ and that of $y$.
Since $y$ is affine generic, its characteristic polynomial is an Eisenstein polynomial in $(T-1)$.
Then $x$ must be $\theta$-affine generic by Proposition \ref{prop:Eisenstein}.
Therefore, if we put $x=(x_{ij})_{ij}$ and $x\theta(x)=(z_{ij})_{ij}$, then Proposition \ref{prop:twisted character} implies that
\[
\Theta_{\pi^{\GL_{2n+1}}_{a},\theta}(x)
=
\Kl^{n+1}_{\alpha}(\psi),
\]
where $\alpha$ is the image of $az_{1,2}^{2}\cdots z_{n,n+1}^{2}x_{2n+1,1}\varpi^{-1}\in\mcO^{\times}$ in the residue field $k$.
By Proposition \ref{prop:Eisenstein}, $z_{1,2}^{2}\cdots z_{n,n+1}^{2}x_{2n+1,1}$ is nothing but the constant term of the Eisenstein polynomial (modulo $\mfp^{2}$).
In terms of $y=(y_{ij})_{ij}$, the constant term is given by $-y_{1,2}\cdots y_{2n-1,2n}y_{2n,1}$ modulo $\mfp^{2}$.
Hence we get
\begin{align*}
\Theta_{\pi^{\GL_{2n+1}}_{a},\theta}(x)
&=
\Kl^{n+1}_{-ay_{1,2}\cdots y_{2n-1,2n}y_{2n,1}\varpi^{-1}}(\psi)\\
&=
\Kl^{n+1}_{-ay_{1,2}^{2}\cdots y_{n-1,n}^{2}y_{n,n+1}y_{2n,1}\varpi^{-1}}(\psi).
\end{align*}
\end{proof}

\begin{cor}\label{cor:depth-bound}
The representation $\pi^{\Sp_{2n}}$ has a nonzero $I_{\Sp_{2n}}^{++}$-fixed vector.
In particular, the representation $\pi^{\Sp_{2n}}$ is either depth-zero or simple supercuspidal.
\end{cor}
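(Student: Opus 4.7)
The plan is to produce a nonzero $I_{\Sp_{2n}}^{++}$-fixed vector in $\pi^{\Sp_{2n}}$; the ``In particular'' assertion then follows at once from the classification of simple supercuspidal representations recalled in Section \ref{sec:ssc}, since the only supercuspidal representations of $\Sp_{2n}(F)$ with a nonzero $I_{\Sp_{2n}}^{++}$-fixed vector are the depth-zero ones and the simple supercuspidal ones. My strategy is to exhibit such a vector by showing that $\pi^{\Sp_{2n}}|_{I^+_{\Sp_{2n}}}$ contains the affine generic character $\chi_a^{\Sp_{2n}}$ as a subrepresentation; because $\chi_a^{\Sp_{2n}}$ is trivial on $I^{++}_{\Sp_{2n}}$, any vector realizing this piece is automatically $I^{++}_{\Sp_{2n}}$-fixed.

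Concretely, I would compute the multiplicity
\[
m := \dim \Hom_{I^+_{\Sp_{2n}}}\bigl(\chi_a^{\Sp_{2n}}, \pi^{\Sp_{2n}}|_{I^+_{\Sp_{2n}}}\bigr) = \frac{1}{\vol(I^+_{\Sp_{2n}})} \int_{I^+_{\Sp_{2n}}} \Theta_{\pi^{\Sp_{2n}}}(g)\, \overline{\chi_a^{\Sp_{2n}}(g)}\, dg
\]
and show it is at least $1$. Since $\chi_a^{\Sp_{2n}}$ factors through $I^+_{\Sp_{2n}}/I^{++}_{\Sp_{2n}} \cong k^{n+1}$, I decompose this integral into coset-integrals indexed by $b = (b_1, \ldots, b_{n+1}) \in k^{n+1}$. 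On an affine generic coset (all $b_i \in k^\times$), Proposition \ref{prop:char-at-y} controls the character value: at any regular semisimple representative, it is either $0$ or the Kloosterman sum $\Kl^{n+1}_{-ab_1^2 \cdots b_{n-1}^2 b_n b_{n+1}}(\psi)$. Proposition \ref{prop:char-at-h} confirms the non-vanishing case actually occurs at the element $h_u$, and hence by conjugation-invariance of the character (together with the fact that $I^+_{\Sp_{2n}}/I^{++}_{\Sp_{2n}}$ is abelian, so $I^+$-conjugation preserves each coset) the same Kloosterman-sum formula governs any regular semisimple representative in the affine generic coset.

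For the non-affine-generic cosets (some $b_i = 0$) I would use an endoscopic descent argument: any regular semisimple $y \in I^+_{\Sp_{2n}}$ with non-zero character value is, via the endoscopic character relation and the proof of Proposition \ref{prop:char-at-y}, a norm of a $\theta$-affine generic element $x \in I^+_{\GL_{2n+1}}$. By Proposition \ref{prop:Eisenstein}, this forces the characteristic polynomial of $y$ to be (a factor of $(T-1)$ times) an Eisenstein polynomial in $(T-1)$, which in turn forces $y$ to be affine generic. Consequently the character vanishes on the regular semisimple locus of any non-affine-generic coset, so these cosets contribute $0$ to $m$. It remains to sum the affine generic contributions and recognize the result as (a positive multiple of) the analogous computation for the simple supercuspidal $\pi_a^{\Sp_{2n}}$ via Proposition \ref{prop:char-Sp}, where $m$ evaluates to $1$ by Frobenius reciprocity applied to the defining compact induction.

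The main obstacle will be to justify rigorously that the coset-integrals over non-affine-generic cosets vanish not just at regular semisimple points but in the measure-theoretic sense (the singular locus is negligible, so this should reduce to a careful choice of Haar measure normalization), and to verify that after the Fourier transform on $k^{n+1}$, the surviving affine generic contributions yield the same non-zero Kloosterman-sum-type expression as for $\pi_a^{\Sp_{2n}}$. Once this matching is in place, the conclusion $m \geq 1$ is immediate.
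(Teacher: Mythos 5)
Your proposal takes a genuinely different, and considerably more ambitious, route than the paper: you aim to compute the full multiplicity
\[
m = \dim\Hom_{I^{+}_{\Sp_{2n}}}\bigl(\chi^{\Sp_{2n}}_{a},\,\pi^{\Sp_{2n}}|_{I^{+}_{\Sp_{2n}}}\bigr)
\]
and show it is $\geq 1$. If that worked, it would immediately give $\pi^{\Sp_{2n}}\cong\pi^{\Sp_{2n}}_{a}$ by Frobenius reciprocity, short-circuiting the whole formal-degree argument that follows. This is a red flag: it means your approach is trying to prove the final result of the paper at a point where the available tools do not yet suffice. The gap lies precisely where you suspect, but it is more serious than a measure-theoretic technicality.

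The concrete problem is the non-affine-generic cosets. Your proposed argument for their vanishing runs: $\Theta_{\pi^{\Sp_{2n}}}(y)\ne0$ implies $y$ is a norm of some $x\in\GL_{2n+1}(F)$, which after twisting can be put in $I^{+}_{\GL_{2n+1}}$, which then must be $\theta$-affine generic, forcing $y$ to be affine generic. Two steps in this chain are unsupported. First, the existence of a norm-preimage $x\in\GL_{2n+1}(F)$ is only guaranteed when $y$ is \emph{elliptic} (this is exactly what the paper's proof of Proposition~\ref{prop:char-at-y} uses, via the adjoint relation of the transfer factor); a non-affine-generic regular semisimple $y\in I^{+}_{\Sp_{2n}}$ need not be elliptic, and for such $y$ the endoscopic character relation gives you nothing. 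Second, and more fundamentally, even if $x$ exists and lands in $I^{+}_{\GL_{2n+1}}$, nothing forces it to be $\theta$-affine generic: Proposition~\ref{prop:twisted character} computes the twisted character \emph{at} $\theta$-affine generic elements, it does not say the twisted character \emph{vanishes away} from them. Proposition~\ref{prop:theta-norm} is likewise only stated for $\theta$-affine generic input. So the implication ``$\Theta_{\pi^{\GL_{2n+1}}_{a},\theta}(x)\ne0\Rightarrow x$ is $\theta$-affine generic'' is not available. There is also a subsidiary issue even on the affine generic cosets: the ``either $0$ or $\Kl$'' dichotomy of Proposition~\ref{prop:char-at-y} makes each coset-integral an undetermined nonnegative multiple of a Kloosterman sum, so you cannot match it term-by-term with the corresponding expression for $\pi^{\Sp_{2n}}_{a}$ from Proposition~\ref{prop:char-Sp}, where no such ambiguity occurs. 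Finally, a small point: ``the non-vanishing case actually occurs at $h_{u}$'' requires choosing $u$ with $\Kl^{n+1}_{au}(\psi)\ne0$, which is not automatic and needs the Gauss-sum observation.

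The paper's argument is deliberately more local and sidesteps all of this. Instead of integrating $\Theta_{\pi^{\Sp_{2n}}}\cdot\overline{\chi^{\Sp_{2n}}_{a}}$ over all of $I^{+}_{\Sp_{2n}}$, it tests $\Theta_{\pi^{\Sp_{2n}}}$ against the indicator function of the \emph{single} coset $h_{u}I^{++}_{\Sp_{2n}}$, for $u$ chosen so that $\Kl^{n+1}_{au}(\psi)\ne0$ (existence of such $u$ coming from the Gauss-sum formula for the Fourier transform of Kloosterman sums). On that one coset, every element is affine generic with the same invariant, so by Propositions~\ref{prop:char-at-h} and~\ref{prop:char-at-y} the character takes values in the two-element set $\{0,\Kl^{n+1}_{au}(\psi)\}$, is nonzero at the explicit point $h_{u}$, and is locally constant; hence there is no cancellation and $\Theta_{\pi^{\Sp_{2n}}}(\mathbbm{1}_{h_{u}I^{++}_{\Sp_{2n}}})\ne0$. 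This already produces a nonzero $I^{++}_{\Sp_{2n}}$-fixed vector, which is all the corollary asserts. The identification with $\pi^{\Sp_{2n}}_{a}$ — which your approach would deliver in one shot if it worked — is then completed only in Proposition~\ref{prop:exact-corresp}, \emph{after} the formal-degree argument has ruled out the depth-zero case.
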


\begin{proof}
Let $u\in k^{\times}$ be an element such that $\Kl_{au}^{n+1}(\psi)\neq0$.
The existence of such an element $u\in k^{\times}$ follows from the fact that the Fourier transform of the Kloosterman sums is given by a Gauss sum, which is nonzero; see \cite[Corollary A.5]{MR3904769} for the details.
Then, by Propositions \ref{prop:char-at-h} and \ref{prop:char-at-y},
\begin{itemize}
\item
$\Theta_{\pi^{\Sp_{2n}}}(h_{u})=\Kl_{au}^{n+1}(\psi)\neq0$, and
\item
$\Theta_{\pi^{\Sp_{2n}}}(y)$ is equal to either $0$ or $\Kl_{au}^{n+1}(\psi)$ for any $y\in h_{u}I_{\Sp_{2n}}^{++}$.
\end{itemize}
This implies that $\Theta_{\pi^{\Sp_{2n}}}(\mathbbm{1}_{h_{u}I_{\Sp_{2n}}^{++}})\neq0$, hence we get the first assertion.

As $I_{\Sp_{2n}}^{++}$ is the $(\frac{1}{2n}+)$-th Moy--Prasad filtration of the Iwahori subgroup associated with the barycenter of the fundamental alcove, we conclude that the depth of $\pi^{\Sp_{2n}}$ is not greater than $\frac{1}{2n}$.
Since $\frac{1}{2n}$ is the minimal positive depth of representations of $\Sp_{2n}(F)$ which can be attained only by simple supercuspidal representations, we get the second assertion (see \cite[Appendix B]{Oi:2018} for the details of the discussion here).
\end{proof}

\subsection{A consequence of the formal degree conjecture}\label{subsec:FDC}

We say that a tempered $L$-parameter $\phi\in\Phi_{\temp}(\Sp_{2n})$ is \textit{discrete} if its centralizer group $\Cent_{\SO_{2n+1}(\C)}(\phi)$ is finite.
It is known that $\phi$ is discrete if and only if $\Pi_{\phi}^{\Sp_{2n}}$ contains a discrete series representation of $\Sp_{2n}(F)$, and that, in this case, every member of $\Pi_{\phi}^{\Sp_{2n}}$ is discrete series.
Note that hence the $L$-parameter $\phi_{a}$ of our interest is discrete in this sense.

For discrete $L$-parameters, Hiraga--Ichino--Ikeda proposed the following conjecture (\cite[Conjecture 1.4]{MR2350057}): (here we state the conjecture according to a formulation by Gross--Reeder, \cite[Conjecture 7.1 (5)]{MR2730575}):

\begin{conj}[Formal degree conjecture]\label{conj:FDC}
Let $\phi\in\Phi_{\temp}(\Sp_{2n})$ be a discrete $L$-parameter.
Then, for any $\pi\in\Pi_{\phi}^{\Sp_{2n}}$, we have
\[
|\deg(\pi)|
=
\frac{1}{|\mathcal{S}_{\phi}|}\cdot\frac{|\gamma(0,\Ad\circ\phi,\psi_{F})|}{|\gamma(0,\Ad\circ\phi_{0},\psi_{F})|}.
\]
Here, 
\begin{itemize}
\item
$\deg(\pi)$ is the formal degree of $\pi$ with respect to the Euler--Poincare measure (see \cite[Section 7.1]{MR2730575}),
\item
$\Ad$ is the adjoint representation of $\SO_{2n+1}(\C)$ on its Lie algebra $\mathfrak{so}_{2n+1}(\C)$,
\item
$\gamma(s,-,\psi_{F})$ is the $\gamma$-factor for representations of $W_{F}$ with respect to a nontrivial additive character $\psi_{F}$ of $F$ of level $0$, and
\item
$\phi_{0}$ denotes the principal parameter in the sense of Gross--Reeder (see \cite[Section 3.3]{MR2730575}).
\end{itemize}
\end{conj}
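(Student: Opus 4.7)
The plan is to follow the strategy pioneered by Hiraga--Ichino--Ikeda for $\GL_N$ and extended to classical groups by Beuzart-Plessis. The goal is to express both sides of the conjectured identity through a common object, namely the Plancherel density of $\Sp_{2n}(F)$ at the discrete series $\pi$, and then reduce the computation to known $\gamma$-factor identities on $\GL_{2n+1}(F)$ via twisted endoscopy.

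First I would recall that the formal degree $\deg(\pi)$ can be recovered as the appropriate evaluation of the Plancherel measure at $\pi$, so the problem amounts to computing that Plancherel density. By Arthur's endoscopic classification (together with the endoscopic character relation recalled in Section \ref{subsec:LLC}), the $L$-packet $\Pi_{\phi}^{\Sp_{2n}}$ is matched with the $\theta$-twisted tempered representation $\pi_{\phi}$ of $\GL_{2n+1}(F)$. A stable version of the Plancherel formula, combined with the Kottwitz--Shelstad transfer of orbital integrals, should then transfer the Plancherel density on $\Sp_{2n}(F)$ to a $\theta$-twisted analogue on $\GL_{2n+1}(F)$. The factor $\frac{1}{|\mathcal{S}_{\phi}|}$ arises naturally in this comparison as the ratio of the size of the $L$-packet to the number of endoscopic data contributing to the transfer.

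On the $\GL_{2n+1}$ side, I would exploit the fact that the twisted Plancherel density can be expressed through local $\gamma$-factors via the local functional equation of Jacquet--Piatetski-Shapiro--Shalika and the Langlands--Shahidi normalization of intertwining operators. The $\gamma$-factor of $\phi \otimes \phi^{\vee}$ differs from $\gamma(s, \Ad \circ \phi, \psi_F)$ only by the $\gamma$-factor of the trivial representation (a known, explicit quantity), so the right-hand side of the conjectured identity can be read off from the $\GL_{2n+1}$-side computation. The principal parameter $\phi_{0}$ enters precisely as the canonical normalization needed to render the identity independent of the choice of Haar measure.

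The main obstacle, and the deepest input, is pinning down the overall normalizing constant. A priori, the comparison of Plancherel densities through endoscopy is correct only up to a constant depending on measure choices and the global transfer factor; to fix this constant one typically has to run a global argument, comparing the stable twisted trace formula for $(\GL_{2n+1}, \theta)$ with the Arthur--Selberg trace formula for $\Sp_{2n}$ against a carefully chosen test function. Producing a sufficiently rich family of globalizations of $\pi_{\phi}$ with prescribed local behavior at an auxiliary split place (so that the local-global comparison isolates the desired local identity) is traditionally the hardest step. Once this global input is in place, the remaining work is the formal manipulation of transfer factors and measures sketched above, yielding the claimed formula uniformly for every $\pi \in \Pi_{\phi}^{\Sp_{2n}}$.
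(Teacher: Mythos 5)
The statement you have been asked to address is labelled \textbf{Conjecture} in the paper, and the paper offers no proof of it: the authors simply state the Hiraga--Ichino--Ikeda formal degree conjecture (in the Gross--Reeder formulation), remark that it is open in general, and cite the 2021 announcement of Beuzart-Plessis that he has established it for $\Sp_{2n}$. The paper then \emph{assumes} this conjecture and uses it in Proposition \ref{prop:fdeg} and Corollary \ref{cor:ssc}. So there is no proof in the paper against which to compare your argument.

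As a sketch of how one might actually prove the conjecture, your proposal is a reasonable high-level description of the known strategy (comparison of Plancherel densities through twisted endoscopy, reduction to $\gamma$-factors on $\GL_{2n+1}$, and a global trace-formula argument to pin down the constant). But as written it is not a proof: every substantive step --- the ``stable version of the Plancherel formula,'' the claim that the twisted Plancherel density on $\GL_{2n+1}$ is controlled by the $\gamma$-factor of $\phi\otimes\phi^{\vee}$, the assertion that the factor $1/|\mathcal{S}_{\phi}|$ ``arises naturally,'' and above all the globalization argument --- is invoked rather than carried out, and each one hides significant technical work (stabilization of the twisted trace formula, construction of suitable globalizations, normalization of transfer factors and measures). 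None of these can be taken for granted, which is precisely why the result was only recently announced and was still treated as conditional when this paper was written. In short: your outline points in the right direction, but it does not constitute a proof, and in any case the paper itself supplies no proof of this statement, only a citation.
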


\begin{rem}
In \cite{MR2350057}, the formal degree conjecture is formulated for any quasi-split connected reductive group $\G$.
In general, the right-hand side of the identity of Conjecture \ref{conj:FDC} must contain one more term ``$\langle1,\pi\rangle$'' (see \cite[Conjecture 1.4]{MR2350057}).
Here $\langle-,\pi\rangle$ denotes the irreducible character of $\mathcal{S}_{\phi}$ corresponding to $\pi$ (recall that each $L$-packet is equipped with a bijective map to the set of irreducible characters of the finite group $\mathcal{S}_{\phi}$).
In fact, the group $\mathcal{S}_{\phi}$ is always abelian when $\G=\Sp_{2n}$.
Accordingly, $\langle1,\pi\rangle$ is always given by $1$.
\end{rem}

The formal degree conjecture is still open in general.
However, recently Beuzart-Plessis announced that he proved it for $\Sp_{2n}$ (in his talk at the seminar ``S\'eminaire Groupes R\'eductifs et Formes Automorphes'', held on November 8, 2021; \cite{Beuzart-Plessis:2022}).
In the following, we investigate what can be proved by assuming this conjecture.

We start with reviewing a description of the $L$-parameter $\phi_{a}$ of the simple supercuspidal representation $\pi^{\GL_{2n+1}}_{a}$ according to Bushnell--Henniart:

\begin{prop}\label{prop:L-par}
As a $(2n+1)$-dimensional representation of $W_{F}$, we have
\[
\phi_{a}\cong \Ind_{W_{K}}^{W_{F}} \xi,
\]
where 
\begin{itemize}
\item
$K$ is a totally ramified extension of $F$ of degree $2n+1$, and
\item
$\xi\colon W_{K}\rightarrow\C^{\times}$ is a quadratic character of Swan conductor $1$.
\end{itemize}
\end{prop}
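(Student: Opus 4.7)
The plan is to combine the monomial theorem for irreducible representations of $W_F$ with a Swan-conductor computation and an analysis of self-duality, in the spirit of the classification of odd-dimensional irreducible orthogonal $W_F$-representations by Bushnell--Henniart \cite{MR2824846}. First I collect the basic properties of $\phi_a$: it is irreducible (since $\pi^{\GL_{2n+1}}_a$ is supercuspidal), self-dual and orthogonal (from $\theta$-stability and from $\phi_a$ factoring through $\SO_{2n+1}(\C)$), and has trivial determinant (trivial central character). Since the Artin conductor of a simple supercuspidal of $\GL_N$ equals $N+1$ and $\phi_a$ is ramified irreducible, $\Swan(\phi_a)=a(\phi_a)-\dim\phi_a=1$.

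By the monomial theorem, I write $\phi_a\cong \Ind_{W_K}^{W_F}\xi$ for some finite extension $K/F$ with $[K:F]=2n+1$ and quasi-character $\xi$ of $W_K$. To show that $K/F$ is totally ramified, let $K_0/F$ be its maximal unramified subextension of degree $f$. Then $\phi_a|_{I_F}$ decomposes as $f$ copies of a common representation of $I_F=I_{K_0}$, so $f$ divides $\Swan(\phi_a)=1$, forcing $f=1$. Since $2n+1$ is coprime to $p=2$, the totally ramified extension $K/F$ is tame, with different exponent $d(K/F)=2n$. The conductor-discriminant formula
\[
a_F(\Ind_{W_K}^{W_F}\xi)=d(K/F)+a_K(\xi)
\]
then yields $a_K(\xi)=2$, so $\Swan(\xi)=1$.

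To show that $\xi$ is quadratic, I exploit the self-duality $\Ind_{W_K}^{W_F}\xi\cong \Ind_{W_K}^{W_F}\xi^{-1}$. By Mackey theory and the irreducibility of $\phi_a$, this forces $\xi^{\sigma}=\xi^{-1}$ for some $\sigma\in W_F$ normalizing $W_K$. The normalizer quotient $N_{W_F}(W_K)/W_K$ embeds into the automorphism group of $K/F$, which is cyclic of order dividing $2n+1$ (since $K/F$ is tame and totally ramified of odd degree). Combining this odd-order constraint with the trivial determinant $\det\phi_a=1$ (via the formula $\det\Ind_{W_K}^{W_F}\xi=(\xi\circ V)\cdot \delta_{K/F}$, where $V$ is the transfer and $\delta_{K/F}$ is a sign character of order at most $2$) rules out a nontrivial $\sigma$, yielding $\xi^2=1$. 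The non-triviality of $\xi$ is clear from $\Swan(\xi)=1\neq 0$.

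The main obstacle is the last step: since $K/F$ need not be Galois, one must argue carefully that $\phi_a^{\vee}\cong \phi_a$ forces $\xi=\xi^{-1}$ rather than merely that $\xi^{-1}$ is a nontrivial Galois conjugate of $\xi$. The key is to pair the odd-order constraint on the normalizer with the determinant condition in order to eliminate every nontrivial choice of intertwining element $\sigma$.
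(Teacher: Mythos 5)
Your proposal takes a genuinely different route from the paper's own proof, which is essentially a two-line citation: the induced structure of $\phi_a$ (with $K/F$ totally ramified of degree $2n+1$ and $\Swan(\xi)=1$) is quoted from Bushnell--Henniart \cite{MR3158004}, and the quadraticity of $\xi$ is quoted from \cite[Lemma 3.2]{MR2824846}. You instead attempt to reconstruct the induced shape from the monomial property of $W_F$, the conductor equality $a(\phi_a)=2n+2$, and the conductor--discriminant formula. That first half is a sound re-derivation (essentially following \cite{MR3158004}) and it does correctly explain $\Swan(\xi)=1$.

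However, the step deducing that $\xi$ is quadratic has a genuine gap, as you partly acknowledge. The Mackey criterion for $\Ind_{W_K}^{W_F}\xi\cong\Ind_{W_K}^{W_F}\xi^{-1}$ only produces a double coset $W_KwW_K$ with $\xi$ and ${}^w\xi^{-1}$ agreeing on $W_K\cap{}^wW_K$; there is no reason for $w$ to normalize $W_K$. So the sentence ``this forces $\xi^\sigma=\xi^{-1}$ for some $\sigma\in W_F$ normalizing $W_K$'' is not justified, and the odd-order constraint on $N_{W_F}(W_K)/W_K$ then has nothing to grip. The natural repair is to restrict to inertia first: since $K/F$ is tame, $I_K$ is normal in $I_F$ (because $I_F/P_F$ is abelian), $\phi_a|_{I_F}\cong\Ind_{I_K}^{I_F}(\xi|_{I_K})$ is irreducible (cf.\ Lemma~\ref{lem:Clifford}), and now Mackey with the normal subgroup $I_K$ does produce $\sigma\in I_F/I_K$, cyclic of odd order $2n+1$, with $(\xi|_{I_K})^\sigma=\xi^{-1}|_{I_K}$; iterating through the odd order gives $\xi^2|_{I_K}=1$. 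But this only shows that $\xi^2$ is unramified; pinning down the unramified part of $\xi$ (via the determinant condition and the Frobenius action, together with self-duality) still requires care, and your phrase about ``pairing the odd-order constraint with the determinant condition'' does not actually carry it out. This remaining work is exactly the content of \cite[Lemma 3.2]{MR2824846}, which the paper invokes; without reproducing that argument, the proof is incomplete.
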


\begin{proof}
By \cite{MR3158004}, we have $\phi_{a}\cong \Ind_{W_{K}}^{W_{F}} \xi$ for a totally ramified extension $K$ of $F$ of degree $2n+1$ and a character $\xi\colon W_{K}\rightarrow\C^{\times}$ of Swan conductor $1$.
Since $\phi_{a}$ is self-dual, $\xi$ is necessarily quadratic by \cite[Lemma 3.2]{MR2824846}.
\end{proof}

\begin{rem}\label{rem:explicit}
Although Bushnell--Henniart give a complete description of the character $\xi$ in \cite{MR3158004}, we do not review it here since we will only need the fact that $\xi$ is quadratic.
\end{rem}

Let us compute the quantity $|\gamma(0,\Ad\circ\phi,\psi_{F})|$ based on this description of $\phi_{a}$.
Note that we have
\[
\Ad\circ\phi
\cong 
\wedge^{2}\phi,
\]
where $\phi$ is viewed as a homomorphism $W_{F}\rightarrow\SO_{2n+1}(\C)$ on the left-hand side and as a $(2n+1)$-dimensional representation on the right-hand side.

\begin{lem}\label{lem:L}
We have $L(s,\Ad\circ\phi_{a})=1$.
\end{lem}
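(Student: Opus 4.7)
The plan is to reduce to showing that the inertia invariants $(\wedge^2\phi_a)^{I_F}$ vanish, which suffices because $L(s,\Ad\circ\phi_a)=\det(1-q^{-s}\Frob\mid(\Ad\circ\phi_a)^{I_F})^{-1}$ and $\Ad\circ\phi_a\cong\wedge^2\phi_a$ as $W_F$-representations (the adjoint representation of $\SO_{2n+1}(\C)$ on $\mathfrak{so}_{2n+1}(\C)$ is the exterior square of the standard representation). To achieve this, I would first prove that the restriction $\phi_a|_{I_F}$ is irreducible, and then use the orthogonal structure of $\phi_a$ to deduce the vanishing.

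For the irreducibility of $\phi_a|_{I_F}$, the key inputs are Clifford theory and the integrality of Swan conductors. Using Proposition \ref{prop:L-par} together with the Artin conductor formula for induced representations, with $K/F$ tamely totally ramified of degree $2n+1$ (so $f(K/F)=1$ and $d(K/F)=2n$) and $a(\xi)=\Swan(\xi)+1=2$, we obtain $a(\phi_a)=f(K/F)a(\xi)+d(K/F)\dim\xi=2n+2$, so that $\Swan(\phi_a)=a(\phi_a)-\dim\phi_a=1$. Since $I_F\triangleleft W_F$ and $\phi_a$ is irreducible of dimension $2n+1$, Clifford theory yields an irreducible smooth representation $U$ of $I_F$ and positive integers $e,k$ such that
\[
\phi_a|_{I_F}\cong e\bigoplus_{i=0}^{k-1}U^{\Frob^i}
\quad\text{and}\quad
ek\dim U=2n+1.
\]
Since Swan conductors are additive on direct sums and invariant under conjugation by Frobenius (the upper-numbering ramification filtration being $W_F$-stable), we deduce $1=\Swan(\phi_a)=ek\Swan(U)$. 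Because $\Swan(U)\in\Z_{\geq 0}$, this equality forces $e=k=\Swan(U)=1$, and so $\phi_a|_{I_F}$ is irreducible.

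The irreducibility of $\phi_a|_{I_F}$ then gives $\End_{I_F}(V)=\C\cdot\id_V$, where $V$ denotes the underlying space of $\phi_a$. The orthogonality of $\phi_a$ provides a $W_F$-invariant non-degenerate symmetric bilinear form $B$ on $V$, inducing a $W_F$-equivariant isomorphism $V\otimes V\cong\End(V)$ that identifies $\Sym^2 V$ with $B$-self-adjoint endomorphisms and $\wedge^2 V$ with $B$-skew-adjoint endomorphisms. The identity $\id_V$ is $B$-self-adjoint, hence
\[
(\wedge^2\phi_a)^{I_F}\subset(V\otimes V)^{I_F}=\C\cdot\id_V\subset\Sym^2 V,
\]
so $(\wedge^2\phi_a)^{I_F}=0$ as required. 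The main obstacle is the irreducibility of $\phi_a|_{I_F}$, handled by the Clifford/Swan-conductor argument above; the argument depends crucially on the wild ramification $\Swan(\phi_a)=1>0$, which itself comes from the wildly ramified quadratic character $\xi$ of Proposition \ref{prop:L-par}.
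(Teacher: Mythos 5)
Your proposal is correct and reaches the same conclusion via the same overall structure — reduce to $(\wedge^{2}\phi_{a})^{I_{F}}=0$, prove $\phi_{a}|_{I_{F}}$ is irreducible, then use the $W_{F}$-invariant orthogonal form to identify $(\phi_{a}\otimes\phi_{a})^{I_{F}}$ with the symmetric part — but you take a genuinely different route to the irreducibility of $\phi_{a}|_{I_{F}}$. The paper first proves a standalone lemma (Lemma~\ref{lem:Clifford}) equating the number of irreducible $I_{F}$-constituents of an irreducible $W_{F}$-representation with the maximal degree of an unramified extension $E/F$ from which it can be induced, and then asserts that this degree is $1$ for $\phi_{a}$ because of Proposition~\ref{prop:L-par}. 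You instead run the Clifford decomposition $\phi_{a}|_{I_{F}}\cong e\bigoplus_{i}U^{\Frob^{i}}$ directly and combine the additivity and $\Frob$-invariance of the Swan conductor with the explicit computation $\Swan(\phi_{a})=1$ (obtained from the conductor--discriminant formula applied to the totally tamely ramified induction of Proposition~\ref{prop:L-par}) to force $e=k=1$. Your approach is more self-contained: it avoids the auxiliary Lemma~\ref{lem:Clifford} altogether and fills in with a concrete numerical argument the step that the paper leaves as an assertion (namely that $\phi_{a}$ admits no unramified inducing datum beyond $F$ itself); the paper's approach buys a reusable general criterion, but at the cost of a slightly terser application. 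Both are valid; I would only suggest stating explicitly that $\phi_{a}^{I_{F}}=0$ (immediate from irreducibility and $\dim\phi_{a}>1$) when passing from the Artin conductor $a(\phi_{a})=2n+2$ to $\Swan(\phi_{a})=1$, since the general relation is $\Swan=a-\dim(\phi/\phi^{I_{F}})$ rather than $a-\dim\phi$.
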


To prove this lemma, let us first show the following lemma, which might be well-known to experts:

\begin{lem}\label{lem:Clifford}
For any irreducible representation $\phi$ of $W_{F}$, the following two numbers coincide:
\begin{enumerate}
\item
the number of irreducible constituents of the restriction $\phi|_{I_{F}}$ of $\phi$ to the inertia subgroup $I_{F}$;
\item
the maximal degree of an unramified extension $E$ of $F$ such that there exists an irreducible representation $\sigma$ of $W_{E}$ satisfying $\phi\cong\Ind_{W_{E}}^{W_{F}}\sigma$.
\end{enumerate}
\end{lem}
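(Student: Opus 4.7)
The plan is to apply Clifford theory to the normal subgroup $I_F\triangleleft W_F$, using crucially that the quotient $W_F/I_F\cong\hat{\Z}$ is procyclic (topologically generated by a Frobenius lift $\Phi$).

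First, by Clifford's theorem applied to $\phi|_{I_F}$, the distinct irreducible $I_F$-constituents of $\phi|_{I_F}$ form a single orbit under the conjugation action of $W_F/I_F$ on isomorphism classes of irreducibles, and all occur with the same multiplicity. Let $\tau$ be one such constituent and let $r$ denote the orbit size. Since $W_F/I_F$ is procyclic, the stabilizer of (the isomorphism class of) $\tau$ in $W_F$ corresponds to the unique open subgroup of index $r$, namely $W_E$ for the unramified extension $E/F$ of degree $r$. Clifford's theorem then yields $\phi\cong\Ind_{W_E}^{W_F}\sigma$ for an irreducible $W_E$-representation $\sigma$ with $\sigma|_{I_F}$ isotypic of type $\tau$ and some multiplicity $e$, so that the length of $\phi|_{I_F}$ equals $er$.

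The central (and most delicate) step is to establish $e=1$. The quotient $W_E/I_F$ is again procyclic, generated by $\Phi^r$; fixing an $I_F$-equivariant isomorphism $\beta\colon\tau\to\tau^{\Phi^r}$ (which exists precisely because $\Phi^r\in W_E$ stabilizes $\tau$), Schur's lemma shows that the $\Phi^r$-action on $\sigma\cong\C^e\otimes\tau$ is of the form $B\otimes\beta$ for a uniquely determined matrix $B\in\GL_e(\C)$. The $W_E$-invariant subspaces of $\sigma$ correspond bijectively to the $B$-invariant subspaces of $\C^e$, so irreducibility of $\sigma$ forces the cyclic group $\langle B\rangle$ to act irreducibly on $\C^e$; since any single matrix over $\C$ admits an eigenvector, this gives $e=1$. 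This multiplicity-one assertion is the main obstacle, resting essentially on the procyclicity of $W_F/I_F$; the analogous statement would fail for groups with a non-cyclic quotient. Consequently the length of $\phi|_{I_F}$ equals $r=[E:F]$, and the induced presentation realizes (2) $\geq r$.

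For the reverse inequality, suppose $\phi\cong\Ind_{W_{E'}}^{W_F}\sigma'$ with $\sigma'$ irreducible and $E'/F$ unramified of degree $d$. Since $E'/F$ is unramified, $I_{E'}=I_F$, and Mackey's formula yields $\phi|_{I_F}\cong\bigoplus_{i=0}^{d-1}(\sigma')^{\Phi^i}|_{I_F}$, a sum of $d$ nonzero $I_F$-subrepresentations. Hence the length of $\phi|_{I_F}$ is at least $d$, so $d\leq r$, completing the proof that the maximum in (2) equals $r$, which is (1).
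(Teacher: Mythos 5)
Your proof is correct, and its global strategy coincides with the paper's: Clifford theory applied to $I_{F}\triangleleft W_{F}$, with the procyclicity of $W_{F}/I_{F}$ identifying the stabilizer of an $I_{F}$-constituent with $W_{E}$ for an unramified $E/F$, a multiplicity-one step, and a Mackey computation for the reverse bound. The genuine divergence is in how multiplicity one is established. (Beware that your $\tau$/$\sigma$ labeling is the reverse of the paper's; below I use yours.) You argue concretely: writing $\sigma|_{I_{F}}\cong\C^{e}\otimes\tau$, Schur's lemma forces $\sigma(\Phi^{r})=B\otimes\beta$ with $B\in\GL_{e}(\C)$, the $W_{E}$-stable subspaces of $\sigma$ biject with the $B$-stable subspaces of $\C^{e}$, and an eigenvector of the single matrix $B$ then forces $e=1$. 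The paper instead constructs an extension $\tilde{\tau}$ of $\tau$ to $W_{E}$ (using the same intertwiner you call $\beta$), applies the projection formula $\Ind_{I_{F}}^{W_{E}}\tau\cong\tilde{\tau}\otimes\Ind_{I_{F}}^{W_{E}}\mathbbm{1}$ together with Frobenius reciprocity to write $\sigma\cong\tilde{\tau}\otimes\chi$ for a character $\chi$ of $W_{E}/I_{F}$, and reads off that $\sigma|_{I_{F}}\cong\tau$ is irreducible. The two are really the same phenomenon seen from different angles: an eigenvector of $B$ is exactly the one-dimensional $I_{F}$-isotypic line that the twist $\tilde{\tau}\otimes\chi$ singles out. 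Your version is more elementary and avoids manipulating the (infinite-dimensional, unless one is careful about $W_{E}/I_{F}\cong\Z$ versus $\widehat{\Z}$) induced module $\Ind_{I_{F}}^{W_{E}}\tau$; the paper's version gives the extra structural output $\sigma\cong\tilde{\tau}\otimes\chi$. In the converse direction the paper records the slightly finer divisibility $[E':F]\mid r$ (each Mackey summand has the same length), but your bound $[E':F]\le r$ is what is actually needed and your count delivers it directly.
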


\begin{proof}
Let $d$ be the number of irreducible constituents of the restriction $\phi|_{I_{F}}$ and $\sigma$ an irreducible constituent of $\phi|_{I_{F}}$.
We let $W_{E}$ denote the stabilizer of $\sigma$ in $W_{F}$, i.e.,
\[
W_{E}:=\{w\in W_{F} \mid \text{$\sigma^{w}\cong\sigma$ as a representation of $W_{E}$}\}.
\]
Then, by Clifford theory, there exists an irreducible constituent $\tau$ of $\phi|_{W_{E}}$ such that $\tau|_{I_{F}}$ is $\sigma$-isotypic and $\phi\cong\Ind_{W_{E}}^{W_{F}}\tau$ (thus $E$ is a finite unramified extension of $F$).
By Mackey theory, we see that
\[
\phi|_{I_{F}}
\cong(\Ind_{W_{E}}^{W_{F}}\tau)|_{I_{F}}
\cong \bigoplus_{w\in W_{F}/W_{E}}\tau^{w}|_{I_{F}},
\]
where $\tau^{w}$ is the representation of $W_{E}$ given by $\tau^{w}(w'):=\tau(w^{-1}w'w)$ for $w'\in W_{E}$.

Note that $\sigma$ extends to a representation $\tilde{\sigma}$ of its stabilizer group $W_{E}$ since $W_{E}/I_{F}$ is cyclic; 
for example, if we fix an intertwiner $I_{w}\colon \sigma^{w}\cong\sigma$ as a representation of $W_{E}$ for a generator $w$ of $W_{E}/I_{F}$, then $\tilde{\sigma}(w^{k}w'):=I_{w}^{k}\circ\sigma(w')$ (for $k\in\Z$ and $w'\in I_{F}$) gives an extension of $\sigma$ to $W_{E}$.
Since $\Hom_{I_{F}}(\tau,\sigma)\neq0$, Frobenius reciprocity implies that $\Hom_{W_{E}}(\tau,\Ind_{I_{F}}^{W_{E}}\sigma)\neq0$.
By the projection formula, we have
\[
\Ind_{I_{F}}^{W_{E}}\sigma
=\Ind_{I_{F}}^{W_{E}}(\tilde{\sigma}|_{I_{F}})
\cong\tilde{\sigma}\otimes\Ind_{I_{F}}^{W_{E}}\mathbbm{1}
\cong\tilde{\sigma}\otimes\biggl(\bigoplus_{\chi\in (W_{E}/I_{F})^{\vee}}\chi\biggr).
\]
(Note that $W_{E}/I_{F}$ is cyclic.)
Hence, by the irreducibility of $\tau$, $\tau$ is isomorphic to $\tilde{\sigma}\otimes\chi$ for some character $\chi$ of $W_{E}/I_{F}$.
In particular, $\tau^{w}|_{I_{F}}$ is irreducible for any $w\in W_{F}/W_{E}$.
In other words, $d$ equals the degree of the extension $E/F$.

Conversely, if $\phi$ is induced from a representation $\rho$ of $W_{K}$, where $K$ is an unramified extension of $F$ of degree $e$, then we have 
\[
\phi|_{I_{F}}
\cong (\Ind_{W_{K}}^{W_{F}}\rho)|_{I_{F}}
\cong \bigoplus_{w\in W_{F}/W_{K}}\rho^{w}|_{I_{F}}
\]
by Mackey theory.
Hence $e$ divides $d$.
\end{proof}

\begin{proof}[Proof of Lemma \ref{lem:L}]
By definition, we have
\[
L(s,\Ad\circ\phi_{a})
=
\det\bigl(1-\wedge^{2}\phi_{a}(\Frob) \mid (\wedge^{2}\phi_{a})^{I_{F}}\bigr)^{-1}.
\]
Hence it suffices to show that $(\wedge^{2}\phi_{a})^{I_{F}}=0$.

According to the description of $\phi_{a}$ as in Proposition \ref{prop:L-par}, we see that the number as in (2) of Lemma \ref{lem:Clifford} is equal to $1$.
Hence, Lemma \ref{lem:Clifford} implies that $\phi_{a}|_{I_{F}}$ is irreducible.

Thus, by Schur's lemma, the space $\Hom_{I_{F}}(\phi_{a},\phi_{a})\cong(\phi_{a}\otimes\phi_{a})^{I_{F}}$ is $1$-dimensional (note that $\phi_{a}\cong\phi_{a}^{\vee}$).
Since we have 
\[
(\phi_{a}\otimes\phi_{a})^{I_{F}}
\cong
(\Sym^{2}\phi_{a})^{I_{F}}\oplus(\wedge^{2}\phi_{a})^{I_{F}}
\]
and $(\Sym^{2}\phi_{a})^{I_{F}}$ is $1$-dimensional by the orthogonality of $\phi_{a}$, we conclude that $(\wedge^{2}\phi_{a})^{I_{F}}=0$.
\end{proof}

\begin{prop}\label{prop:Swan}
We have $\Swan(\Ad\circ\phi_{a})=n$.
\end{prop}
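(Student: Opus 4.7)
Since Swan conductors are invariant under unramified base change, I begin by replacing $F$ with its unramified extension $F'$ of degree $f := \ord_{N}(q)$, where $N := 2n+1$, so that $F'$ contains a primitive $N$-th root of unity $\zeta_{N}$. Then $L := K \cdot F'$ is cyclic Galois over $F'$, totally tamely ramified of degree $N$. Since the double coset space $W_{F'} \backslash W_{F} / W_{K}$ is a singleton (the minimal polynomial of a generator of $K/F$ remains irreducible over $F'$), Mackey's theorem gives $\phi_{a}|_{W_{F'}} = \Ind_{W_{L}}^{W_{F'}}(\xi|_{W_{L}})$, and $\xi|_{W_{L}}$ is still quadratic of Swan conductor $1$ (as $L/K$ is unramified). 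Dropping primes and letting $\tau$ generate $\Gal(L/F') \cong \Z/N$, I may assume $K/F$ is cyclic Galois tame and $F \supset \mu_{N}$.

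By the projection formula and Mackey,
\[
\phi_{a} \otimes \phi_{a} = \Ind_{W_{K}}^{W_{F}}\bigl(\xi \otimes \phi_{a}|_{W_{K}}\bigr) = \bigoplus_{i=0}^{N-1} \Ind_{W_{K}}^{W_{F}}(\eta_{i}), \qquad \eta_{i} := \xi \cdot \xi^{\tau^{i}}.
\]
The term $\Ind \eta_{0} = \Ind \mathbbm{1}$ is a sum of $N$ tame characters of $W_{F}/W_{K}$, contributing $0$ to the Swan conductor. For $i \neq 0$, local class field theory identifies $\xi|_{U_{K}^{1}/U_{K}^{2}}$ with an additive character $u \mapsto (-1)^{\Tr_{k/\F_{2}}(cu)}$ of $U_{K}^{1}/U_{K}^{2} \cong k$, for some $c \in k^{\times}$ (nonzero because $\Swan_{K}(\xi) = 1$). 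Since $\tau$ acts on $U_{K}^{1}/U_{K}^{2}$ by multiplication by $\zeta_{N}$, one computes
\[
\eta_{i}|_{U_{K}^{1}/U_{K}^{2}}(1 + u\varpi_{K}) = (-1)^{\Tr_{k/\F_{2}}(c(1 + \zeta_{N}^{-i})u)}.
\]
Because $p = 2$ (so $-1 = 1$ in $k$) and $\zeta_{N}^{-i} \neq 1$, one has $1 + \zeta_{N}^{-i} \neq 0$ in $k$, so $\eta_{i}$ is nontrivial on $U_{K}^{1}/U_{K}^{2}$, hence $\Swan_{K}(\eta_{i}) = 1$. Applying the Swan inductivity formula $\Swan_{F}(\Ind_{W_{K}}^{W_{F}}\eta) = \Swan_{K}(\eta)$ for the tame totally ramified extension $K/F$, I get $\Swan_{F}(\phi_{a} \otimes \phi_{a}) = N - 1 = 2n$.

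To split this Swan between $\wedge^{2} \phi_{a}$ and $\Sym^{2} \phi_{a}$, note that $\eta_{-i}$ is $W_{F}$-conjugate to $\eta_{i}$ (indeed $\eta_{i}^{\tau^{-i}} = \eta_{-i}$), so $\Ind \eta_{i} \cong \Ind \eta_{-i}$; and each $\Ind \eta_{i}$ for $1 \leq i \leq n$ is irreducible by the same LCFT computation (trivial stabilizer in $\Gal(K/F)$). Hence each of the $n$ distinct representations $\Ind \eta_{1}, \ldots, \Ind \eta_{n}$ appears with multiplicity exactly $2$ in $\phi_{a} \otimes \phi_{a}$, while $\Ind \mathbbm{1}$ appears with multiplicity $1$. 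Write $N_{S}, N_{W}$ for the numbers of characters of $\Ind \mathbbm{1}$ lying in $\Sym^{2}\phi_{a}$ and $\wedge^{2}\phi_{a}$, and $m_{i}^{S}, m_{i}^{W}$ for the multiplicities of $\Ind \eta_{i}$ there (so $N_{S} + N_{W} = N$ and $m_{i}^{S} + m_{i}^{W} = 2$). The identity $\dim \Sym^{2}\phi_{a} = (n+1)N = N_{S} + N\sum_{i=1}^{n} m_{i}^{S}$ forces $N_{S} \equiv 0 \pmod{N}$, so $N_{S} \in \{0, N\}$; orthogonality of $\phi_{a}$ implies that the trivial character lies in $\Sym^{2}\phi_{a} \cap \Ind\mathbbm{1}$, so $N_{S} \geq 1$, which gives $N_{S} = N$ and $N_{W} = 0$. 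Then $\sum_{i} m_{i}^{W} = n$, so
\[
\Swan_{F}(\wedge^{2} \phi_{a}) = N_{W} \cdot 0 + \sum_{i=1}^{n} m_{i}^{W} \cdot \Swan_{F}(\Ind \eta_{i}) = n.
\]
The main technical obstacle is the local class field theory computation: identifying $\xi|_{U_{K}^{1}/U_{K}^{2}}$ explicitly, verifying that $\tau$ acts on this quotient by multiplication by $\zeta_{N}$, and exploiting the characteristic-$2$ identity $-1 = 1$ in $k$ to conclude that every nontrivial $\eta_{i}$ retains the maximal possible Swan conductor.
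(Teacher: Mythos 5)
Your proof is correct, but it takes a genuinely different route from the paper's on both halves of the computation. For the first half, the paper avoids any class-field-theoretic analysis of $\xi$: it invokes the Bushnell--Henniart--Kutzko explicit conductor formula for the Rankin--Selberg convolution $\phi_a\otimes\phi_a^{\vee}$ (via the simple stratum $[\mathfrak{a},1,0,\beta]$ with $\mathfrak{c}(\beta)=2n$) to get $\Swan(\phi_a\otimes\phi_a^{\vee})=2n$ directly. You instead pass to an unramified base change to make $K/F$ cyclic, decompose $\phi_a\otimes\phi_a=\bigoplus_i\Ind\eta_i$, and compute $\Swan_K(\eta_i)=1$ from the Artin reciprocity description of $\xi$ on $U_K^1/U_K^2$ and the tame inductivity of Swan. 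This is more self-contained (no appeal to BHK) at the cost of the base-change reduction. For the second half, the paper observes that on wild inertia $\Sym^2\phi_a$ and $\wedge^2\phi_a$ differ only by $\bigoplus_w({}^w\xi)^2|_{P_F}$, which vanishes because $\xi$ is quadratic, giving $\Swan(\Sym^2)=\Swan(\wedge^2)$ in one line. You instead run a multiplicity/dimension count: $\Ind\mathbbm{1}$ contributes $N$ tame constituents, the wild $\Ind\eta_i$'s are $N$-dimensional irreducibles, and since $N_S\equiv 0\pmod N$ while orthogonality forces $N_S\geq 1$, all $N$ tame characters land in $\Sym^2$, leaving Swan $n$ for $\wedge^2$. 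One small imprecision: you assert that $\Ind\eta_1,\ldots,\Ind\eta_n$ are pairwise distinct so that each has multiplicity exactly $2$ in $\phi_a\otimes\phi_a$; this isn't proved (it amounts to showing $(1+\zeta_N^{-i})/(1+\zeta_N^{-j})\notin\mu_N$ for $1\le i\ne j\le n$), but fortunately your counting argument does not actually require it --- grouping isomorphic $\Ind\eta_i$'s together with their aggregate multiplicities $\mu_j$ (with $\sum\mu_j=2n$) yields the same congruence $N_S\equiv 0\pmod N$ and the same conclusion, so you should either drop the distinctness claim or note that the count is insensitive to it.
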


\begin{proof}
If we can show the following two equalities, then we get the desired equality:
\begin{enumerate}
\item
$\Swan(\Sym^{2}\phi_{a})+\Swan(\wedge^{2}\phi_{a})=2n$,
\item
$\Swan(\Sym^{2}\phi_{a})-\Swan(\wedge^{2}\phi_{a})=0$.
\end{enumerate}

By noting that $\Swan(\Sym^{2}\phi_{a})+\Swan(\wedge^{2}\phi_{a})=\Swan(\phi_{a}\otimes\phi_{a}^{\vee})$, the first equality follows from an explicit formula for the conductor of the Rankin--Selberg convolution due to Bushnell--Henniart--Kutzko (\cite{MR1606410}) as follows.
Let $[\mathfrak{a},1,0,\beta]$ be a simple stratum associated with the simple supercuspidal representation $\pi^{\GL_{2n+1}}_{a}$ of $\GL_{2n+1}(F)$ (see \cite[434 page]{MR3158004}).
According to \cite[6.5 Theorem (i)]{MR1606410}, we get
\[
\Artin(\phi_{a}\otimes\phi_{a}^{\vee})
=
(2n+1)^{2}\biggl(1+\frac{\mathfrak{c}(\beta)}{(2n+1)^{2}}\biggr)-1,
\]
where $\Artin(-)$ denotes the Artin conductor.
Here $\mathfrak{c}(\beta)$ is the quantity introduced in \cite[6.4]{MR1606410}.
As $\beta$ is minimal, it is not hard to see that $\mathfrak{c}(\beta)=2n$ (cf.\ \cite[6.12]{MR1606410}).
Hence $\Artin(\phi_{a}\otimes\phi_{a}^{\vee})=(2n+1)^{2}+2n-1$.
By noting that $\dim_{\C}(\phi_{a}\otimes\phi_{a}^{\vee})=(2n+1)^{2}$ and $\dim_{\C}((\phi_{a}\otimes\phi_{a}^{\vee})^{I_{F}})=1$ (see the proof of Lemma \ref{lem:L}), we have 
\begin{align*}
\Artin(\phi_{a}\otimes\phi_{a}^{\vee})
&=
\dim_{\C}\bigl((\phi_{a}\otimes\phi_{a}^{\vee})/(\phi_{a}\otimes\phi_{a}^{\vee})^{I_{F}}\bigr)+\Swan(\phi_{a}\otimes\phi_{a}^{\vee})\\
&=
(2n+1)^{2}-1+\Swan(\phi_{a}\otimes\phi_{a}^{\vee}).
\end{align*}
Thus, by comparing the two equalities, we get the desired equality $\Swan(\Sym^{2}\phi_{a})+\Swan(\wedge^{2}\phi_{a})=2n$.


Let us check the latter equality (2).
By noting that the Swan conductor depends only on the wild ramification, we consider the restriction to the wild inertia subgroup $P_{F}$.
Since we have $\phi_{a}\cong\Ind_{W_{K}}^{W_{F}}\xi$ as in Proposition \ref{prop:L-par}, we get
\[
\phi_{a}|_{P_{F}}\cong
\bigoplus_{w\in W_{F}/W_{K}} {}^{w}\xi|_{P_{F}}
\]
by Mackey theory (note that $K/F$ is tamely ramified).
Hence we have
\[
\Sym^{2}\phi_{a}|_{P_{F}}
\cong
\bigoplus_{w,w'\in W_{F}/W_{K}} ({}^{w}\xi|_{P_{F}})\cdot({}^{w'}\xi|_{P_{F}})
\]
and
\[
\wedge^{2}\phi_{a}|_{P_{F}}
\cong
\bigoplus_{\begin{subarray}{c}w,w'\in W_{F}/W_{K}\\ w\neq w'\end{subarray}} ({}^{w}\xi|_{P_{F}})\cdot({}^{w'}\xi|_{P_{F}}).
\]
This implies that we have
\[
(\Sym^{2}\phi_{a}-\wedge^{2}\phi_{a})|_{P_{F}}
\cong
\bigoplus_{w\in W_{F}/W_{K}} ({}^{w}\xi|_{P_{F}})^{2}.
\]
However, since $\xi$ is quadratic, every summand is trivial.
Thus we get $\Swan(\Sym^{2}\phi_{a})-\Swan(\wedge^{2}\phi_{a})=0$.
\end{proof}

\begin{prop}\label{prop:fdeg}
We have $|\gamma(0,\Ad\circ\phi,\psi_{F})|=q^{n^{2}+n}$.
In particular, $|\deg(\pi^{\Sp_{2n}})|=|\deg(\pi^{\Sp_{2n}}_{b})|$ for any $b\in k^{\times}$.
\end{prop}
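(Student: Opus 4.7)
The plan is two steps: first compute $|\gamma(0,\Ad\circ\phi_a,\psi_F)|$ from Lemma \ref{lem:L} and Proposition \ref{prop:Swan} via a conductor calculation, then derive the formal-degree consequence from Conjecture \ref{conj:FDC}.

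For the first assertion, set $V := \Ad\circ\phi_a$.  Because $\phi_a$ factors through $\SO_{2n+1}(\C)$, the representation $V$ is self-dual.  Combined with $L(s,V)=1$ (Lemma \ref{lem:L}), the relation $\gamma(s,V,\psi_F) = \varepsilon(s,V,\psi_F)\cdot L(1-s,V^\vee)/L(s,V)$ reduces the gamma factor to the epsilon factor: $\gamma(0,V,\psi_F) = \varepsilon(0,V,\psi_F)$.  For a self-dual representation and a level-zero additive character, the standard epsilon-factor formula yields
\[
|\varepsilon(0,V,\psi_F)| = q^{\Artin(V)/2},
\]
where the complementary factor $|\varepsilon(1/2,V,\psi_F)|$ equals $1$ by self-duality (via the functional equation).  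We now compute the Artin conductor using $\Artin(V) = \dim(V/V^{I_F}) + \Swan(V)$.  The triviality of $L(s,V)$ forces $V^{I_F} = 0$, hence $\dim(V/V^{I_F}) = \dim V = \dim\mathfrak{so}_{2n+1}(\C) = n(2n+1)$; combining with $\Swan(V) = n$ from Proposition \ref{prop:Swan} gives $\Artin(V) = n(2n+1) + n = 2n(n+1)$, and therefore $|\gamma(0,V,\psi_F)| = q^{n(n+1)} = q^{n^2+n}$.

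For the ``in particular'' claim, I would first observe that $|\deg(\pi^{\Sp_{2n}}_b)|$ does not depend on $b \in k^\times$: since $\pi^{\Sp_{2n}}_b$ is compactly induced from the one-dimensional affine generic character $\chi^{\Sp_{2n}}_b$ of the fixed subgroup $I^+_{\Sp_{2n}}$, its formal degree with respect to any Haar measure equals $1/\vol(I^+_{\Sp_{2n}})$, which is manifestly independent of $b$.  It then suffices to establish the equality for a single $b$.  Apply Conjecture \ref{conj:FDC}---now known for $\Sp_{2n}$ by Beuzart-Plessis---to $\pi^{\Sp_{2n}}$: the irreducibility of $\phi_a$ forces $|\mathcal{S}_{\phi_a}| = 1$, and together with the value $|\gamma(0,\Ad\circ\phi_a,\psi_F)| = q^{n^2+n}$ just computed, this gives
\[
|\deg(\pi^{\Sp_{2n}})| = \frac{q^{n^2+n}}{|\gamma(0,\Ad\circ\phi_0,\psi_F)|}.
\]
Applying the same conjecture to a simple supercuspidal $\pi^{\Sp_{2n}}_b$---whose $L$-parameter is discrete with $|\mathcal{S}| = 1$ and whose adjoint gamma factor, by Gross--Reeder's verification of FDC for simple supercuspidal representations, has the same absolute value $q^{n^2+n}$---yields exactly the same expression, completing the equality.

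The main technical obstacle is normalization bookkeeping: verifying the level-zero epsilon-factor identity $|\varepsilon(0,V,\psi_F)| = q^{\Artin(V)/2}$ for self-dual $V$, and ensuring the FDC output for $\pi^{\Sp_{2n}}_b$ is consistent with the direct compact-induction value $1/\vol(I^+_{\Sp_{2n}})$.  Note that the unknown quantity $|\gamma(0,\Ad\circ\phi_0,\psi_F)|$ does not need to be computed explicitly, as it cancels in the comparison between $\pi^{\Sp_{2n}}$ and $\pi^{\Sp_{2n}}_b$.
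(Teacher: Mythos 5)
Your first step—reducing $|\gamma(0,\Ad\circ\phi_a,\psi_F)|$ to the epsilon factor via $L(s,\Ad\circ\phi_a)=1$, then computing $\Artin(\Ad\circ\phi_a)=n(2n+1)+n$ via $(\Ad\circ\phi_a)^{I_F}=0$ and $\Swan(\Ad\circ\phi_a)=n$—is exactly the paper's argument, and it is correct.

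The second step, however, contains a circularity. You propose to ``apply the same conjecture to $\pi^{\Sp_{2n}}_b$,'' claiming that its $L$-parameter has $|\mathcal{S}|=1$ and adjoint $\gamma$-factor $q^{n^2+n}$. But at this stage of the argument neither fact is known: the $L$-parameter of $\pi^{\Sp_{2n}}_b$ is precisely what the whole paper is trying to identify, and the claim $|\mathcal{S}_\phi|=1$ (equivalently, that the $L$-parameter is irreducible) is the main theorem. Invoking ``Gross--Reeder's verification of FDC for simple supercuspidal representations'' does not help, because their verification is relative to a \emph{conjectural} parameter whose correctness is not established for $\Sp_{2n}$ over a dyadic field; indeed for $p\neq 2$ the analogous $L$-packet has two elements, so $|\mathcal{S}|=2$ there, showing that the value of $|\mathcal{S}_\phi|$ genuinely depends on the arithmetic situation. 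What the paper actually uses is not an application of Conjecture \ref{conj:FDC} to $\pi^{\Sp_{2n}}_b$, but Gross--Reeder's \emph{unconditional group-theoretic} formula \cite[(72)]{MR2730575}, which gives $|\deg(\pi^{\Sp_{2n}}_b)| = q^{N+\ell}/(|Z_{\Sp_{2n}}(q)|\cdot|\gamma(0,\Ad\circ\phi_0,\psi_F)|)$ with $N=n^2$, $\ell=n$, $|Z_{\Sp_{2n}}(q)|=1$, entirely from the structure of the Iwahori filtration and without any reference to the $L$-parameter of $\pi^{\Sp_{2n}}_b$. Your preliminary observation that $|\deg(\pi^{\Sp_{2n}}_b)|=1/\vol(I^+_{\Sp_{2n}})$ is independent of $b$ points in the right direction—one could in principle complete the argument by computing that volume in the Euler--Poincar\'e normalization and matching it against $q^{n^2+n}/|\gamma(0,\Ad\circ\phi_0,\psi_F)|$—but you do not carry out that computation, so the proof as written has a genuine gap.
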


\begin{proof}
Recall that
\[
\gamma(0,\Ad\circ\phi_{a},\psi_{F})
=\varepsilon(0,\Ad\circ\phi_{a},\psi_{F})\cdot\frac{L(1,\Ad\circ\phi_{a})}{L(0,\Ad\circ\phi_{a})}
\]
by definition.
As $\psi_{F}$ is taken to be of level $0$, we have
\[
|\varepsilon(0,\Ad\circ\phi_{a},\psi_{F})|
=q^{\frac{1}{2}\Artin(\Ad\circ\phi_{a})}
\]
(see \cite[the equality (10) and Proposition 2.3]{MR2730575}).
By noting that $\dim_{\C}(\Ad\circ\phi_{a})=n(2n+1)$ and $\dim_{\C}((\Ad\circ\phi_{a})^{I_{F}})=0$ (see the proof of Lemma \ref{lem:L}), we have 
\begin{align*}
\Artin(\Ad\circ\phi_{a})
&=\dim_{\C}\bigl((\Ad\circ\phi_{a})/(\Ad\circ\phi_{a})^{I_{F}}\bigr)+\Swan(\Ad\circ\phi_{a})\\
&=n(2n+1)+n
=2(n^{2}+n),
\end{align*}
where we used Proposition \ref{prop:Swan} in the second equality.
Hence we get $|\varepsilon(0,\Ad\circ\phi_{a},\psi_{F})|=q^{n^{2}+n}$.
On the other hand, the contribution of the $L$-factor is trivial by Lemma \ref{lem:L}.
Thus we get the first assertion.

Since $|\mathcal{S}_{\phi_{a}}|=1$, the formal degree conjecture for $\Sp_{2n}$ implies that
\begin{align*}
|\deg(\pi^{\Sp_{2n}})|
&=\frac{|\gamma(0,\Ad\circ\phi_{a},\psi_{F})|}{|\gamma(0,\Ad\circ\phi_{0},\psi_{F})|}\\
&=q^{n^{2}+n}\cdot |\gamma(0,\Ad\circ\phi_{0},\psi_{F})|^{-1}.
\end{align*}
On the other hand, as computed in \cite[(72)]{MR2730575}, the absolute value of the formal degree of a(ny) simple supercuspidal representation $\pi^{\Sp_{2m}}_{b}$ ($b\in k^{\times}$) is given by
\[
|\deg(\pi^{\Sp_{2n}}_{b})|
=
\frac{q^{N+\ell}}{|Z_{\Sp_{2n}}(q)|\cdot |\gamma(0,\Ad\circ\phi_{0},\psi_{F})|},
\]
where 
\begin{itemize}
\item
$N$ is the number of positive roots in $\Sp_{2n}$, hence $n^{2}$,
\item
$\ell$ is the rank of $\Sp_{2n}$, hence $n$, and
\item
$|Z_{\Sp_{2n}}(q)|$ is the number of central elements of $\Sp_{2n}(F)$ of finite prime-to-$p$ order, hence $1$.
\end{itemize}
Therefore we get $|\deg(\pi^{\Sp_{2n}})|=|\deg(\pi^{\Sp_{2n}}_{b})|$.
\end{proof}

\begin{cor}\label{cor:ssc}
The representation $\pi^{\Sp_{2n}}$ is simple supercuspidal.
\end{cor}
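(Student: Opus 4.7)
The plan is to combine the dichotomy of Corollary \ref{cor:depth-bound}---that $\pi^{\Sp_{2n}}$ is either depth-zero supercuspidal or simple supercuspidal---with the formal degree identity of Proposition \ref{prop:fdeg}. Since being simple supercuspidal is exactly the conclusion we seek, everything reduces to excluding the depth-zero case, and the exclusion will be effected by a formal degree comparison.

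Suppose for contradiction that $\pi^{\Sp_{2n}}$ is depth-zero supercuspidal. Then $\pi^{\Sp_{2n}}$ is compactly induced from an irreducible representation of the normalizer in $\Sp_{2n}(F)$ of some parahoric subgroup $P \subsetneq \Sp_{2n}(F)$, whose restriction to $P$ is inflated from a cuspidal representation $\sigma$ of the finite reductive quotient $\overline{P}$; its formal degree then takes the standard form $\dim(\sigma)/\vol(\widetilde{P})$ (with respect to the Euler--Poincar\'e measure). On the other hand, the calculation in the proof of Proposition \ref{prop:fdeg} gives $|\deg(\pi^{\Sp_{2n}})| = |\deg(\pi^{\Sp_{2n}}_{b})| = q^{n^{2}+n}/|\gamma(0, \Ad\circ\phi_{0}, \psi_{F})|$. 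So the putative equality
\[
\frac{\dim(\sigma)}{\vol(\widetilde{P})} = \frac{q^{n^{2}+n}}{|\gamma(0, \Ad\circ\phi_{0}, \psi_{F})|}
\]
must hold, and the goal is to show this is impossible for any admissible pair $(P, \sigma)$.

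The method is to expand both sides as rational functions of $q$ and match them parahoric by parahoric. The reductive quotients of the parahorics of $\Sp_{2n}$ over the residue field $k$ are of the form $\Sp_{2r}(k) \times \Sp_{2(n-r)}(k)$ for $0 \le r \le n$, and the dimensions of their cuspidal representations together with the parahoric volumes are controlled by well-known formulas coming from Deligne--Lusztig theory. The main obstacle is the uniform case analysis: for each non-Iwahori parahoric $P$ and each cuspidal $\sigma$ of $\overline{P}(k)$, one must verify that the $q$-adic profile of $\dim(\sigma)/\vol(\widetilde{P})$ differs from that of the target formal degree; typically the mismatch shows up either as a discrepancy in the leading power of $q$ or in polynomial factors originating from the non-trivial cuspidal dimension. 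This calculation runs in essentially the same way as in \cite[Appendix B]{Oi:2018} and \cite{MR3904769}, where the analogous argument was carried out in odd residue characteristic, and it is insensitive to the residue characteristic since neither the parahoric volumes nor the cuspidal dimensions depend on whether $p = 2$. Once the depth-zero case is excluded, Corollary \ref{cor:depth-bound} forces $\pi^{\Sp_{2n}}$ to be simple supercuspidal, as desired.
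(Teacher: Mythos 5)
Your proposal follows essentially the same approach as the paper: combine the dichotomy of Corollary \ref{cor:depth-bound} with the formal degree equality of Proposition \ref{prop:fdeg}, then exclude the depth-zero case by showing its formal degree cannot match that of a simple supercuspidal. The only difference is that the paper invokes this last incompatibility as an already-established fact from \cite[A.4]{Henniart:2022}, whereas you sketch the underlying parahoric-by-parahoric comparison (correctly noting that the formulas for parahoric volumes and cuspidal dimensions are insensitive to $p=2$); this is the content of the cited reference, so your plan amounts to the same argument with that step carried out rather than cited.
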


\begin{proof}
By Corollary \ref{cor:depth-bound}, $\pi^{\Sp_{2n}}$ is simple supercuspidal or depth-zero supercuspidal.
As observed in \cite[A.4]{Henniart:2022}, the formal degree of a simple supercuspidal representation of $\Sp_{2n}(F)$ cannot be equal to that of any depth-zero supercuspidal representation of $\Sp_{2n}(F)$.
Thus the equality $|\deg(\pi^{\Sp_{2n}})|=|\deg(\pi^{\Sp_{2n}}_{b})|$ of Proposition \ref{prop:fdeg} (for any $b\in k^{\times}$) implies that $\pi^{\Sp_{2n}}$ is necessarily simple supercuspidal.
\end{proof}

\subsection{Endoscopic lifting of simple supercuspidal representations}
By Corollary \ref{cor:ssc}, the descended representation $\pi^{\Sp_{2n}}$ can be written as $\pi^{\Sp_{2n}}_{b}$ for some $b\in k^{\times}$.

\begin{prop}\label{prop:exact-corresp}
We have $b=a$.
\end{prop}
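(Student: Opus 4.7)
The plan is to pin down the parameter $b$ by comparing two independent computations of the Harish--Chandra character of $\pi^{\Sp_{2n}}$ at the specific affine generic elements $h_u$ constructed in Example \ref{example}. By Corollary \ref{cor:ssc}, $\pi^{\Sp_{2n}}$ is simple supercuspidal, so $\pi^{\Sp_{2n}}\cong\pi^{\Sp_{2n}}_b$ for some $b\in k^\times$, and the task reduces to identifying $b$.

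On the one hand, Proposition \ref{prop:char-at-h} (obtained via the endoscopic character relation from the twisted character computation of $\pi^{\GL_{2n+1}}_a$ at $g_u$) already gives
\[
\Theta_{\pi^{\Sp_{2n}}}(h_u)=\Kl^{n+1}_{au}(\psi) \quad\text{for all } u\in k^\times.
\]
On the other hand, since $h_u\in I^+_{\Sp_{2n}}$ is affine generic, Proposition \ref{prop:char-Sp} applies directly to $\pi^{\Sp_{2n}}_b$. I would read off the affine generic coordinates of $h_u$ from the explicit block form $\eqref{hu}$: the superdiagonal entries $(h_u)_{1,2},\ldots,(h_u)_{n-1,n}$ and $(h_u)_{n,n+1}$ all equal $1$, while $(h_u)_{2n,1}=-\varpi u$. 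Plugging into the formula of Proposition \ref{prop:char-Sp} and using $-1=1$ in $k$ (because $p=2$) gives
\[
\Theta_{\pi^{\Sp_{2n}}_b}(h_u)=\Kl^{n+1}_{bu}(\psi).
\]

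Equating the two expressions forces $\Kl^{n+1}_{au}(\psi)=\Kl^{n+1}_{bu}(\psi)$ for every $u\in k^\times$. To conclude $a=b$ from this, the cleanest route is multiplicative Fourier analysis on $k^\times$: for any character $\chi$ of $k^\times$, a change of variables in the defining sum of the Kloosterman sum gives
\[
\sum_{x\in k^\times}\chi(x)\Kl^{n+1}_x(\psi)=g(\chi)^{n+1},
\]
where $g(\chi)=\sum_{x\in k^\times}\chi(x)\psi(x)$ is the Gauss sum, which never vanishes. Translating by $a$ and $b$ respectively yields $\chi(a)^{-1}g(\chi)^{n+1}=\chi(b)^{-1}g(\chi)^{n+1}$, hence $\chi(a)=\chi(b)$ for every $\chi$, and therefore $a=b$ by Pontryagin duality for $k^\times$.

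There is essentially no obstacle here: the serious work lies in the three inputs (the twisted character formula of Proposition \ref{prop:twisted character} feeding into Proposition \ref{prop:char-at-h}, the character formula of Proposition \ref{prop:char-Sp} for $\Sp_{2n}$, and the simple supercuspidality of $\pi^{\Sp_{2n}}$ in Corollary \ref{cor:ssc}), all of which are already established. The only point requiring care is the bookkeeping of signs and the matrix entries of $h_u$, which trivializes because we are in residue characteristic $2$.
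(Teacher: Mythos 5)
Your proof is correct and follows essentially the same route as the paper: compute $\Theta_{\pi^{\Sp_{2n}}}(h_u)$ two ways (via Proposition \ref{prop:char-at-h} and via Proposition \ref{prop:char-Sp}), equate the resulting Kloosterman sums $\Kl^{n+1}_{au}(\psi)=\Kl^{n+1}_{bu}(\psi)$ for all $u\in k^\times$, and conclude $a=b$. The only difference is cosmetic: where the paper cites \cite[Proposition A.6]{MR3904769} for the final implication, you spell out its proof via multiplicative Fourier analysis on $k^\times$ and nonvanishing of Gauss sums, which is precisely the argument underlying that cited result.
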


\begin{proof}
By Proposition \ref{prop:char-at-h}, we have
\[
\Theta_{\pi^{\Sp_{2n}}_{b}}(h_{u})
=\Kl_{au}^{n+1}(\psi)
\]
for any $u\in k^{\times}$.
Since the left-hand side is given by $\Kl_{bu}^{n+1}(\psi)$ by Proposition \ref{prop:char-Sp} (note that $-1=1$ in $k$), we get the equality $\Kl_{au}^{n+1}(\psi)=\Kl_{bu}^{n+1}(\psi)$, which holds for any $u\in k^{\times}$.
Then we can conclude that $a=b$ (see \cite[Proposition A.6]{MR3904769}).
\end{proof}

Let us summarize our results:
\begin{thm}\label{thm:main}
Let $F$ be a dyadic field.
For $a\in k^{\times}$, let $\pi^{\Sp_{2n}}_{a}$ be the simple supercuspidal representation as in Section \ref{subsec:ssc-Sp}.
\begin{enumerate}
\item
The $L$-packet of $\Sp_{2n}$ containing $\pi^{\Sp_{2n}}_{a}$ is a singleton.
\item
The endoscopic lift of the $L$-packet $\{\pi^{\Sp_{2n}}_{a}\}$ to $\GL_{2n+1}$ is given by the $\theta$-stable simple supercuspidal representation $\pi^{\GL_{2n+1}}_{a}$ with trivial central character.
\end{enumerate}
\end{thm}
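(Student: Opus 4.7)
The plan is to assemble the results of Sections \ref{sec:ssc}--\ref{subsec:FDC} into the final statement; the theorem is really a bookkeeping summary of the descent construction, so there is no essentially new obstacle beyond what has already been overcome.

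First, starting from the GL-side, I take the $\theta$-stable simple supercuspidal representation $\pi^{\GL_{2n+1}}_{a}$ with trivial central character (parametrized by $a \in k^{\times}$ as in Section \ref{subsec:ssc-GL}). Its Langlands parameter $\phi_{a}$ is, by Proposition \ref{prop:L-par}, an irreducible orthogonal $(2n+1)$-dimensional representation of $W_{F}$, trivial on $\SL_{2}(\C)$, with trivial determinant. Irreducibility forces $\Cent_{\SO_{2n+1}(\C)}(\mathrm{Im}(\phi_{a}))$ and hence $\mathcal{S}_{\phi_{a}}$ to be trivial, so the Arthur $L$-packet $\Pi^{\Sp_{2n}}_{\phi_{a}}$ is a singleton; Proposition \ref{prop:supercuspidality} guarantees that its unique member $\pi^{\Sp_{2n}}$ is supercuspidal.

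Second, I would identify $\pi^{\Sp_{2n}}$ explicitly with a member of the family $\{\pi^{\Sp_{2n}}_{b}\}_{b \in k^{\times}}$. Corollary \ref{cor:ssc}, which itself is where the analytic input enters (character computation at $\theta$-affine generic elements plus the formal degree conjecture for $\Sp_{2n}$), tells us that $\pi^{\Sp_{2n}}$ is simple supercuspidal. So by the classification in Section \ref{subsec:ssc-Sp} there is a unique $b \in k^{\times}$ with $\pi^{\Sp_{2n}} \cong \pi^{\Sp_{2n}}_{b}$, and Proposition \ref{prop:exact-corresp} pins down $b = a$ by comparing the character values $\Kl^{n+1}_{au}(\psi) = \Kl^{n+1}_{bu}(\psi)$ for all $u \in k^{\times}$ at the affine generic elements $h_{u}$ from Example \ref{example}.

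Third, I would finish (1) for \emph{every} simple supercuspidal representation of $\Sp_{2n}(F)$, not just those a priori arising in the descent image. The previous two steps produce a well-defined map $k^{\times} \to k^{\times}$ sending $a$ to the label of the descended simple supercuspidal; Proposition \ref{prop:exact-corresp} says this map is the identity, in particular a bijection onto the parameter set of all simple supercuspidal representations of $\Sp_{2n}(F)$ (Section \ref{subsec:ssc-Sp}). Consequently, for every $a \in k^{\times}$, the $L$-packet of $\Sp_{2n}$ containing $\pi^{\Sp_{2n}}_{a}$ is the singleton $\Pi^{\Sp_{2n}}_{\phi_{a}}$, and its endoscopic lift to $\GL_{2n+1}$ is $\pi^{\GL_{2n+1}}_{a}$, which yields both (1) and (2). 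The only delicate point in this final packaging is making sure the Whittaker normalization of the intertwiner used implicitly in the endoscopic character relation agrees with the explicit intertwiner $f \mapsto f\circ\theta$ fixed in Section \ref{subsec:theta-char-GL}; this was recorded in Section \ref{subsec:LLC} following \cite{MR3904769}.
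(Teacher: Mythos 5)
Your proposal is correct and follows essentially the same route as the paper: the theorem is stated there as a summary, and its proof is exactly the assembly you give — $\phi_a$ irreducible orthogonal with trivial determinant (Proposition~\ref{prop:L-par}), singleton supercuspidal packet (Proposition~\ref{prop:supercuspidality}), simple supercuspidality of the descent (Corollary~\ref{cor:ssc}), and identification of the label (Proposition~\ref{prop:exact-corresp}), together with the exhaustion observation from the parametrization in Section~\ref{subsec:ssc-Sp}. The "bijection" framing in your third step is slightly roundabout — it suffices that for each fixed $a$, Proposition~\ref{prop:exact-corresp} places $\pi^{\Sp_{2n}}_a$ in $\Pi^{\Sp_{2n}}_{\phi_a}$, whence disjointness of $L$-packets gives both (1) and (2) directly — but the conclusion is the same.
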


As we already discussed in Section \ref{subsec:descent}, the $L$-parameter of $\pi_{a}^{\Sp_{2n}}$ is trivial on the $\SL_{2}(\C)$-part and irreducible self-dual orthogonal as a $(2n+1)$-dimensional representation of $W_{F}$.
Let us record this observation here:

\begin{cor}\label{cor:main}
Let $F$ be a dyadic field.
Then the $L$-parameter of any simple supercuspidal representation of $\Sp_{2n}(F)$ is irreducible as a $(2n+1)$-dimensional representation of $W_{F}$.
\end{cor}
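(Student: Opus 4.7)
The corollary is essentially an immediate extraction from Theorem \ref{thm:main}, so the plan is to assemble the pieces already established rather than prove anything substantial. The real work has been done in the preceding sections; the present statement is the payoff.

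My plan is as follows. First, by the classification in Section \ref{subsec:ssc-Sp}, every simple supercuspidal representation of $\Sp_{2n}(F)$ is of the form $\pi^{\Sp_{2n}}_{a}$ for some $a\in k^{\times}$, so I only need to verify the irreducibility statement for each such $\pi^{\Sp_{2n}}_{a}$. Second, by Theorem \ref{thm:main}(1), the $L$-packet containing $\pi^{\Sp_{2n}}_{a}$ is the singleton $\{\pi^{\Sp_{2n}}_{a}\}$, and by Theorem \ref{thm:main}(2) its endoscopic lift to $\GL_{2n+1}(F)$ is the $\theta$-stable simple supercuspidal representation $\pi^{\GL_{2n+1}}_{a}$ with trivial central character. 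By the very definition of the endoscopic lift recalled in Section \ref{subsec:LLC}, this means that the $L$-parameter of $\pi^{\Sp_{2n}}_{a}$, viewed as a $(2n+1)$-dimensional self-dual orthogonal representation of $W_{F}\times\SL_{2}(\C)$ through the embedding $\SO_{2n+1}(\C)\hookrightarrow\GL_{2n+1}(\C)$, coincides with the $L$-parameter $\phi_{a}$ of $\pi^{\GL_{2n+1}}_{a}$ under the local Langlands correspondence for $\GL_{2n+1}$.

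Third, I would invoke the standard properties of $\LLC_{\GL_{2n+1}}$ (Harris--Taylor, Henniart): since $\pi^{\GL_{2n+1}}_{a}$ is supercuspidal, its $L$-parameter $\phi_{a}$ is trivial on the $\SL_{2}(\C)$-factor and irreducible as a $(2n+1)$-dimensional representation of $W_{F}$. Combining this with the identification in the previous step yields the corollary.

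There is no real obstacle here; the only subtle point is the identification of the two views of $\phi_{a}$ (through $\SO_{2n+1}(\C)$ versus through $\GL_{2n+1}(\C)$), but this is already addressed in Section \ref{subsec:LLC} via the fact that $\SO_{2n+1}(\C)$-conjugacy and $\GL_{2n+1}(\C)$-conjugacy coincide for parameters landing in $\SO_{2n+1}(\C)$. In fact, the entire observation is already made in the paragraph following Corollary \ref{cor:ssc} (and recorded in the sentence preceding the statement of Corollary \ref{cor:main}), so the proof can be as brief as \emph{``This follows from Theorem \ref{thm:main} together with the irreducibility of the $L$-parameter of a supercuspidal representation of $\GL_{2n+1}(F)$.''}
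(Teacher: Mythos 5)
Your proposal is correct and follows exactly the same route as the paper: the paper records Corollary~\ref{cor:main} as an immediate consequence of Theorem~\ref{thm:main} and the discussion in Section~\ref{subsec:descent}, where it is noted that the $L$-parameter $\phi_{a}$ of the supercuspidal $\pi^{\GL_{2n+1}}_{a}$ is trivial on $\SL_{2}(\C)$ and irreducible as a $(2n+1)$-dimensional $W_{F}$-representation. Your handling of the identification of $\SO_{2n+1}(\C)$- versus $\GL_{2n+1}(\C)$-conjugacy matches what the paper already set up in Section~\ref{subsec:LLC}.
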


\end{document}